\theoremstyle{definition}
\newtheorem{thm}{Theorem}[section]
\newtheorem{lem}[thm]{Lemma}
\newtheorem{prp}[thm]{Proposition}
\newtheorem{dfn}[thm]{Definition}
\newtheorem{cor}[thm]{Corollary}
\newtheorem{rmk}[thm]{Remark}
\newtheorem{ntn}[thm]{Notation}
\newtheorem{exa}[thm]{Example}
\newcommand{\beq}{\begin{equation}}
\newcommand{\eeq}{\end{equation}}
\newcommand{\beqr}{\begin{eqnarray*}}
\newcommand{\eeqr}{\end{eqnarray*}}
\newcommand{\bal}{\begin{align*}} 
\newcommand{\eal}{\end{align*}}
\newcommand{\bei}{\begin{itemize}}
\newcommand{\eei}{\end{itemize}}
\newcommand{\pa}{\precapprox_s}
\newcommand{\af}{\alpha}
\newcommand{\bt}{\beta}
\newcommand{\dt}{\delta}
\newcommand{\ep}{\varepsilon}
\newcommand{\zt}{\zeta}
\newcommand{\io}{\iota}
\newcommand{\sm}{\sigma}
\newcommand{\ta}{\tau}
\newcommand{\Dt}{\Delta}
\newcommand{\Z}{{\mathbb{Z}}}
\newcommand{\C}{{\mathbb{C}}}
\newcommand{\N}{{\mathbb{Z}}_{> 0}}
\newcommand{\Her}{\mathrm{Her}}
\newcommand{\Tr}{{\mathrm{Tr}}}
\newcommand{\dist}{{\mathrm{dist}}}
\newcommand{\diag}{{\mathrm{diag}}}
\newcommand{\card}{{\mathrm{card}}}
\newcommand{\Aut}{{\mathrm{Aut}}}
\newcommand{\Ad}{{\mathrm{Ad}}}
\newcommand{\B}{{\mathrm{B}}}
\newcommand{\Ker}{\mathrm{Ker}}
\newcommand{\E}{\mathrm{E}}
\newcommand{\TA}{\mathrm{TA}}
\newcommand{\QED}{\rule{0.4em}{2ex}}
\newcommand{\wolog}{without loss of generality}
\newcommand{\Wolog}{Without loss of generality}
\newcommand{\ifo}{if and only if}
\newcommand{\ca}{C*-algebra}
\newcommand{\pj}{projection}
\newcommand{\mops}{mutually orthogonal \pj s}
\newcommand{\mvnt}{Murray-von Neumann equivalent}
\begin{document}
\title{Tracial Rokhlin property and non-commutative dimension}
\author{Qingyun Wang}
\maketitle 
\begin{abstract}
Tracial Rokhlin property was introduced by Phillips in \cite{Ph-C} to prove various structure theorems for crossed product. But it is defined for actions on simple C*-algebras only. This paper consists of two major parts: In section 2 and 3, we study the permanence properties and give a complete classification of tracial Rokhlin property for product-type actions; In section 4 and 5, we introduce the weak tracial Rokhlin property for actions on non-simple C*-algebras. We prove that when the action has the weak tracial Rokhlin property and the crossed product is simple, the properties on $A$ of having tracial rank $\leq k$, or real rank 0, or stable rank 1, can be inherited by the crossed product.
\end{abstract}
\section{Introduction}
The Rokhlin property for finite group actions on C*-algebras has been extensively studied since Izumi's paper \cite{Iz1}. It is defined as follows:
\begin{dfn}\label{SRP}
Let $A$ be an infinite dimensional unital \ca, let $\af\colon G \rightarrow \Aut (A)$ be an action of a finite group $G$ on $A$. We say that $\af$  has the {\emph{Rokhlin property}} if for every finite set $F\subset A,$ every $\ep>0$, there are \mops\ $e_g\in A$ for $g\in G$ with $\sum_{g\in G}e_g=1$, such that:
\begin{enumerate}[(1)]
\item
$\|\af_g(e_h)-e_{gh}\|<\ep
$ for all $g, h\in G$.
\item
$\|e_g a-ae_g\|<\ep$ for all $g\in G$ and all $a\in F.$
\end{enumerate}
\end{dfn}
A number of authors have shown that the crossed products by actions with the Rokhlin property preserve many important classes of \ca, such as AF algebras, AI algebras, AT algebras, simple AH algebras with slow dimension growth and real rank 0, $\mathcal{D}$-absorbing C*-algebras for a strongly self-absorbing \ca\ $\mathcal{D}$ and so on. \newline
But the Rokhlin property is very rare.  In the paper \cite{Ph-F}, Phillips  pointed out many obstructions for the Rokhlin property and introduced the tracial Rokhlin property:
\begin{dfn}\label{TRP}
Let $A$ be an infinite dimensional simple unital C*-algebra, let $\af\colon G \rightarrow \Aut(A)$ be an action of a finite group $G$ on $A$. We say that $\af$  has the {\emph{tracial Rokhlin property}} if for every finite set $F\subset A$, every $\ep>0$, and every positive element $x\in A$ with $\|x\|=1$, there are mutually orthogonal projections $e_g\in A$ for $g\in G$  such that:
\begin{enumerate}[(1)]
\item
$\|\af_g(e_h)-e_{gh}\|<\ep$ for all $g, h\in G$.
\item
$\|e_g a-ae_g\|<\ep$ for all $g\in G$ and all $a\in F.$ 
\item
Let $e=\sum_{g\in G}e_g$, we have $1-e\pa x.$ 
\item
$\|exe\|>1-\ep.$
\end{enumerate}
\end{dfn}

The comparison used in condition (3) is Blackadar's comparison, whose definition is given by:
\begin{dfn}\label{B-C}
Let $a$, $b$ be two positive elements in a \ca\ $A$. We say $a\sim_s b$ if there exist some element $x\in A$, such that $a=xx^*$ and $\Her(x^*x)=\Her(b).$ We say $a\pa b$ if there exist $a^{\prime}\in\Her(b)$ such that $a\sim_s a^{\prime}$.
\end{dfn}
It should be pointed out here that Blackadar's comparison is a generalization of Murray-Von Neumann comparison for projections. The notation is adopted from \cite{ORT}.\newline
In Definition \ref{TRP}, we will call (3) the comparison condition and (4) the norm condition. The projections $e_g$ for $g\in G$ will be called Rokhlin projections corresponds to $F$, $x$, and $\ep$, or Rokhlin projections for short.

\section{Rokhlin properties for induced actions}
Induced actions are important ways to generate interesting examples of actions. We'll study the Rokhlin properties for induced actions in this section. To emphsize the distinction between Rokhlin property and tracial Rokhlin property, we will call Rokhlin property the strict Rokhlin property. There are many ways to induce a new action from the old ones, In most cases,  strict Rokhlin property can be inherited.
\begin{prp}
Let $\af\colon G\rightarrow \Aut(A)$ be an action with the strict Rokhlin property, and let $p$ be an invariant projection. Then the induced action $\af|_{pAp}$ has the strict Rokhlin property. 
\end{prp}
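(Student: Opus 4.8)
The plan is to pull the Rokhlin projections for $\af$ on $A$ back to the corner $pAp$ by compressing with $p$, and then to clean up the resulting almost-projections into genuine mutually orthogonal projections summing to $p$.

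First I would fix a finite set $F' \subset pAp$ and $\ep > 0$, and choose a tolerance $\dt > 0$ to be specified at the end (depending on $\ep$, on $n = |G|$, and on $\max_{a \in F'}\|a\|$). Applying the strict Rokhlin property of $\af$ to the finite set $F = F' \cup \{p\} \subset A$ and to $\dt$ yields \mops\ $e_g \in A$ with $\sum_{g \in G} e_g = 1$, $\|\af_g(e_h) - e_{gh}\| < \dt$, and $\|e_g a - a e_g\| < \dt$ for all $a \in F$. The point of putting $p$ into the test set is that then $\|e_g p - p e_g\| < \dt$, so each $e_g$ almost commutes with $p$. I would then set $b_g = p e_g p \in pAp$. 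These are positive and self-adjoint, and because $\sum_g e_g = 1$ one gets the exact identity $\sum_g b_g = p$. Using only $\|e_g p - p e_g\| < \dt$ together with routine estimates, one checks that each $b_g$ is an approximate projection ($\|b_g^2 - b_g\|$ small), that the $b_g$ are pairwise approximately orthogonal ($\|b_g b_h\|$ small for $g \ne h$), that $\|\af_g(b_h) - b_{gh}\| < \dt$ (here I use that $p$ is invariant, so $\af_g(b_h) = p\,\af_g(e_h)\,p$), and that $\|b_g a - a b_g\|$ is small for $a \in F'$ (here $p$ commutes with $a \in pAp$ exactly, and $e_g$ almost commutes with $a$), with all implied constants depending only on $n$.

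It then remains to perturb the $b_g$ to honest mutually orthogonal projections in $pAp$ with the same sum. Since the spectrum of each $b_g$ is concentrated near $\{0,1\}$, applying a continuous function $\chi$ that vanishes near $0$ (so $\chi(b_g)$ stays in $pAp$) and equals $1$ near $1$ produces projections $\chi(b_g) \in pAp$ close to $b_g$, and these are pairwise approximately orthogonal. A standard perturbation result—approximately orthogonal projections can be replaced by genuinely orthogonal ones close to them—now gives \mops\ in $pAp$; to force the sum to be exactly $p$, I would fix an ordering $g_1, \ldots, g_n$ of $G$, orthogonalize the first $n-1$ of the $\chi(b_{g_i})$ to projections $f_{g_1}, \ldots, f_{g_{n-1}}$, and define $f_{g_n} = p - \sum_{i<n} f_{g_i}$, which is a projection $\le p$ orthogonal to the others and, since $\sum_g b_g = p$, is close to $b_{g_n}$. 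This produces \mops\ $f_g \in pAp$ with $\sum_g f_g = p$ and $\|f_g - b_g\| < \ep_1$, where $\ep_1 \to 0$ as $\dt \to 0$.

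Finally, since $\af_g$ is isometric, $\|\af_g(f_h) - f_{gh}\| \le 2\ep_1 + \dt$ and $\|f_g a - a f_g\| \le 2\ep_1\|a\| + C\dt$ for $a \in F'$, where $C$ depends only on $n$; choosing $\dt$ small enough at the outset makes both bounds less than $\ep$, so the $f_g$ are Rokhlin projections for $\af|_{pAp}$, $F'$, and $\ep$. The main obstacle is the third step: carrying out the simultaneous orthogonalization while keeping the sum exactly equal to $p$ and while staying within the prescribed tolerance, i.e.\ choosing $\dt$ correctly relative to $\ep$ and $|G|$ so that the accumulated perturbation errors in the functional calculus and in the orthogonalization remain controlled. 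Everything else reduces to the routine estimates made possible by forcing the $e_g$ to almost commute with $p$.
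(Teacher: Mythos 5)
Your proposal is correct and follows essentially the same route as the paper: both put $p$ into the test set so the Rokhlin projections almost commute with $p$, compress them into the corner $pAp$, and perturb to genuine mutually orthogonal projections summing exactly to $p$. The only cosmetic difference is that the paper outsources the perturbation step to a black-box lemma coming from semiprojectivity of $\mathbb{C}^n$ and then forces $\sum_g e_g = p$ by noting that a subprojection of $p$ within distance $1$ of $p$ must equal $p$, whereas you inline that lemma by functional calculus plus sequential orthogonalization and secure the exact sum by defining the last projection as the complement $p - \sum_{i<n} f_{g_i}$; both devices are standard and yield the same estimates.
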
 
\begin{proof}
We follow essentially the same lines of Lemma 3.7 of \cite{Ph-F}. Let $F\subset pAp$ be finite, let $\ep>0$. Set $n=\mathrm{Card}(G)$. Using semiprojectivity of $\mathbb{C}^n$, Set
\begin{equation*}
\ep_0=min(\frac{1}{n}. \frac{\ep}{4n+1})
\end{equation*}
choose $\dt>0$ such that whenever $B$ is a unital \ca, $q_1,\cdots,q_n$ are mutually orthogonal projections, and $p\in B$ is a non-zero projection such that $\|pq_j-q_jp\|<\dt$ for $1\leq j\leq n$, then there are mutually orthogonal projections $e_j\in pBp$ such that $\|e_j-pq_jp\|<\ep_0$ for $1\leq j \leq n$. We also require $\dt<\ep_0.$\\
Since $\af$ has strict Rokhlin property, with $F\cup \{p\}$ in place of $F$, with $\dt$ in place of $\ep$, we can obtain projections $q_g\in A$ for $g\in G$.  By the choice of $\dt$, there are mutually orthogonal projections $e_g\in pAp$ such that $\|e_g-pq_gp\|<\ep_0$ for $g\in G$. We now estimate, using $\af_g(p)=p$,
\begin{equation*}
\|\af_g(e_h)-e_{gh}\|\leq \|e_h-pq_hp\|+\|e_{gh}-pq_{gh}p\|+\|p(\af_g(q_h)-q_{gh})p\|<2\ep_o+\dt\leq \ep.
\end{equation*}
And for $a\in F$, using $pa=ap=a$,
\begin{equation*}
\|e_ga-ae_g\|\leq 2\|e_g-pq_gp\|+\|p(q_ga-aq_g)p\|<2\ep_0+\dt\leq \ep
\end{equation*}
Next, set $e=\sum_{g\in G}e_g$, then $\|e-p\|\leq\|e_g-pq_gp\|<n\ep_o\leq 1$. But $e$ is also a subprojection of $p$, this force $e=p$, the identity of $pAp.$
\end{proof}

\begin{prp}\label{SRP-Lim}
  Let $\af\colon G\to \Aut(A)$ be an inductive limit action, i.e.\, there exists an increasing sequence $(A_n)_{1<n<\infty}$ such that $A=\overline{\cup_{n\in \N}A_n}$ and each $A_n$ is invariant under the action. Let $\af_n$ denote the induced action on $A_n$. If each $\af_n$ has the strict Rokhlin property, then $\af$ has the strict Rokhlin property. 
\end{prp}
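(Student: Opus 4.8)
The plan is a standard local-approximation argument: obtain Rokhlin projections at a finite stage $A_n$ and then transfer them verbatim to $A$. Given a finite set $F \subset A$ and $\ep > 0$, I would first fix a tolerance $\dt = \ep/3$. Since $A = \overline{\cup_n A_n}$, I would choose an index $n$ and, for each $a \in F$, an element $b_a \in A_n$ with $\|a - b_a\| < \dt$; set $F' = \{b_a : a \in F\} \subset A_n$. Then I would apply the strict Rokhlin property of $\af_n$ to the finite set $F'$ and the tolerance $\dt$, producing \mops\ $e_g \in A_n$ for $g \in G$ with $\sum_{g \in G} e_g = 1$ and satisfying $\|(\af_n)_g(e_h) - e_{gh}\| < \dt$ and $\|e_g b_a - b_a e_g\| < \dt$ for all $g, h \in G$ and all $a \in F$.

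Because $A_n$ is invariant and $\af_n = \af|_{A_n}$, we have $(\af_n)_g(e_h) = \af_g(e_h)$, so condition (1) of Definition \ref{SRP} holds in $A$ with $\dt \leq \ep$. For condition (2), I would estimate, for each $g \in G$ and $a \in F$,
\[
\|e_g a - a e_g\| \leq \|e_g(a - b_a)\| + \|e_g b_a - b_a e_g\| + \|(b_a - a) e_g\| < 2\dt + \dt = \ep,
\]
using $\|e_g\| \leq 1$ and the triangle inequality. Since the normalization $\sum_{g} e_g = 1$ is inherited unchanged, the projections $e_g$ serve as Rokhlin projections for $\af$ associated to $F$ and $\ep$, which is precisely what Definition \ref{SRP} demands.

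The one point requiring care, rather than a genuine obstacle, is the role of the unit. The argument uses that each $A_n$ carries the same identity as $A$, so that the relation $\sum_g e_g = 1$ furnished by $\af_n$ is the correct normalization in $A$; this is the usual convention for a unital inductive limit and is implicit in the hypothesis that $\af_n$ satisfies Definition \ref{SRP}. (Infinite-dimensionality of $A$, also required by the definition, is automatic, since $A$ contains the infinite-dimensional $A_n$.) Everything else reduces to the triangle inequality, so I anticipate no real difficulty.
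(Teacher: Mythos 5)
Your proposal is correct and follows essentially the same route as the paper's proof: approximate $F$ within $\ep/3$ by a finite set in some $A_n$, invoke the strict Rokhlin property of $\af_n$ at tolerance $\ep/3$, and transfer the projections to $A$ via the triangle inequality. Your explicit remarks on the shared unit (so that $\sum_g e_g = 1$ persists in $A$) and on the equivariance $(\af_n)_g(e_h) = \af_g(e_h)$ merely make precise points the paper leaves implicit.
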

\begin{proof}
Let $F$ be a finite subset of $A$, and $\ep>0$. We can then find some $A_n$ and a finite subset $\tilde{F}$ of $A$, such that for any $a\in F$, there is some $b$ in $\tilde{F}$ with $\|a-b\|<\ep/3$. Since $\af_n$ has strict Rokhlin property, with $\tilde{F}$ and $\ep/3$, we can obtain projections $\{e_g\}_{e\in G}$ as in Definition \ref{SRP}. For any $a\in F$,   find $b$ in $\tilde{F}$ such that $\|a-b\|<\ep/3$, then
\begin{equation*}
\|ae_g-e_ga\|\leq \|(a-b)e_g\|+\|be_g-e_gb\|+\|e_g(a-b)\|<\ep/3+\ep/3+\ep/3=\ep.
\end{equation*}
\end{proof}

Throughout this paper, $A\otimes B$ will always denote the minimal tensor product of $A$ and $B$. 
\begin{prp}\label{SRP-T}
Let $\af\colon G\rightarrow \Aut(A)$ be an action with the strict Rokhlin property and $\beta\colon G\rightarrow \Aut(B)$ be an arbitrary action. Then the tensor product $\gamma=\af\otimes \beta\colon G \rightarrow \Aut(A\otimes B)$ has the strict Rokhlin porperty.
\end{prp}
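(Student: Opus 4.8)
The plan is to produce Rokhlin projections for $\gm$ by tensoring Rokhlin projections for $\af$ with the identity of $B$. Given a finite set $F\subset A\otimes B$ and $\ep>0$, I would eventually set $f_g=e_g\otimes 1$, where $\{e_g\}_{g\in G}$ are \mops\ in $A$ obtained from the strict Rokhlin property of $\af$ applied to a suitable finite set and tolerance. Since the $e_g$ are mutually orthogonal with $\sum_g e_g=1$, the $f_g$ are \mops\ in $A\otimes B$ with $\sum_g f_g=1$. Condition (1) is then essentially free: because $\bt_g$ is unital, $\gm_g(f_h)=\af_g(e_h)\otimes 1$, and since $a\mapsto a\otimes 1$ is isometric for the minimal tensor product, $\|\gm_g(f_h)-f_{gh}\|=\|\af_g(e_h)-e_{gh}\|$, which is controlled directly by the tolerance chosen for $\af$.

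The real content is condition (2), the approximate commutation with the (arbitrary) elements of $F$. The obstacle is that a general $c\in F$ is not an elementary tensor, so I cannot directly reduce commutation with $f_g=e_g\otimes 1$ to commutation of $e_g$ with a single element of $A$. To get around this, I would first use density of the algebraic tensor product $A\odot B$ in $A\otimes B$: choose, for each $c\in F$, a finite sum $c'=\sum_{i=1}^{m_c}a_{c,i}\otimes b_{c,i}$ with $\|c-c'\|<\ep/3$. Collect all the first factors into a finite set $F_A=\{a_{c,i}\}\subset A$, and let $m=\max_c m_c$ and $M=1+\max_{c,i}\|b_{c,i}\|$.

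I would then apply the strict Rokhlin property of $\af$ to the finite set $F_A$ with tolerance $\ep'=\min\bigl(\ep,\tfrac{\ep}{3mM}\bigr)$, obtaining the projections $e_g$ above. For $c\in F$ with approximant $c'$, the key estimate is
\[
\|f_g c'-c'f_g\|=\Bigl\|\sum_{i}(e_g a_{c,i}-a_{c,i}e_g)\otimes b_{c,i}\Bigr\|\le\sum_{i}\|e_g a_{c,i}-a_{c,i}e_g\|\,\|b_{c,i}\|<mM\ep'\le\ep/3,
\]
where I used that the minimal tensor norm is a cross norm together with the triangle inequality. Combining this with $\|f_g\|\le 1$ and $\|c-c'\|<\ep/3$ via $\|f_g c-cf_g\|\le 2\|c-c'\|+\|f_g c'-c'f_g\|<\ep$ finishes condition (2). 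The main thing to be careful about is thus the bookkeeping of the tolerance: dividing $\ep$ by the number of summands $m$ and the bound $M$ on the second factors is exactly what is needed to absorb the error coming from non-elementary tensors, and this is the only nontrivial step.
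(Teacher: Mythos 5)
Your proof is correct and follows essentially the same approach as the paper: tensoring the Rokhlin projections for $\af$ with $1_B$ and exploiting that $a\mapsto a\otimes 1$ is isometric. The only difference is that you explicitly carry out the approximation of general elements of $F$ by finite sums of elementary tensors, with the attendant $\ep/3$ bookkeeping, whereas the paper compresses exactly this reduction into its ``without loss of generality'' assumption that $F$ consists of elementary tensors.
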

\begin{proof}
Let $F\subset A$ be finite and $\ep>0$, without loss of generality we may assume that the elements of $F$ are all elementary tensors, i.e.\ $F=\{a_i\otimes b_i\}$. Let $\tilde{F}=\{a_i\}\subset A$ and let $M=max\{\|b_i\|\}$. From the definition \ref{SRP}, we can obtain Rokhlin projections $\{e_g\}_{g\in G}\subset A$ corresponds to $\tilde{F}$ and $\ep/(M+1)$. Consider projections $p_g=e_g\otimes 1\in A\otimes B$, they are mutually orthogonal projections sum up to 1, and satisfy:
\begin{enumerate}[(1)]
\item
$\|\af_g(p_h)-p_{gh}\|=\|(\af_g(e_h)-e_{gh})\otimes 1\|<\ep$.
\item
$\|p_g(a_i\otimes b_i)-(a_i\otimes b_i)p_g\|=\|(e_ga_i-a_ie_g)\otimes b_i\|<\ep/(M+1)M<\ep$.
\end{enumerate}
\end{proof}

Now let's turn to tracial Rokhlin property. A \ca\ is said to have {\emph{Property (SP)}} if every non-zero hereditary C*-subalgebra contains at least one non-zero projection. The following observation is easy to see:
\begin{lem}\label{SP-SRP}
(Lemma 1.13, \cite{Ph-F}) Let $A$ be an infinite dimensional simple separable unital \ca, let $\af \colon G\rightarrow \Aut(A)$ be an action of a finite group $G$ on $A$ which has the tracial Rokhlin property. Then $A$ has Property (SP) or $\af$ has the strict Rokhlin property. 
\end{lem}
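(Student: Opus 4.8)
The plan is to prove the dichotomy by contraposition on Property (SP): I would assume that $A$ does \emph{not} have Property (SP) and show that the comparison condition (3) of Definition \ref{TRP} then forces $\sum_{g\in G}e_g = 1$, which upgrades the tracial Rokhlin property to the strict Rokhlin property of Definition \ref{SRP}.

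First I would use the failure of Property (SP) to fix, once and for all, a nonzero \hsa\ $\Her(x)$ that contains no \nzp; after rescaling I may take $x \geq 0$ with $\|x\|=1$. This single $x$ will serve as the positive element fed into the tracial Rokhlin property every time. Then, given an arbitrary finite set $F\subset A$ and $\ep>0$, I would invoke the tracial Rokhlin property with data $(F,\ep,x)$ to obtain \mops\ $e_g$, and set $e=\sum_{g\in G}e_g$, which is again a \pj.

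The heart of the argument is to unwind the comparison condition $1-e \pa x$ through Definition \ref{B-C}. By that definition there is some $(1-e)' \in \Her(x)$ with $1-e \sim_s (1-e)'$, and hence an element $y$ with $yy^* = 1-e$ and $\Her(y^*y)=\Her((1-e)')$. Since $yy^*$ is a \pj, its nonzero spectrum is contained in $\{1\}$, and because $y^*y$ and $yy^*$ have the same nonzero spectrum, $y^*y$ is also a \pj, say $q$. Thus $\Her((1-e)') = \Her(q) = qAq$, which contains $q$ itself. As $(1-e)'\in\Her(x)$ forces $\Her((1-e)') \subseteq \Her(x)$, we conclude that $q$ is an honest \pj\ lying inside $\Her(x)$. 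By our choice of $x$ this \pj\ must vanish, so $q = y^*y = 0$, whence $y=0$ and $1-e = yy^* = 0$. Therefore $\sum_{g\in G}e_g = 1$.

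Finally I would observe that conditions (1) and (2) of Definition \ref{TRP}, together with the equality $\sum_{g\in G}e_g=1$ just established, are precisely the requirements listed in Definition \ref{SRP}; since $F$ and $\ep$ were arbitrary, this shows $\af$ has the strict Rokhlin property, completing the contrapositive. I expect the only delicate point to be the passage through Blackadar's comparison --- specifically, verifying that $y^*y$ is a genuine \pj\ and that its \hsa\ deposits a real \pj\ into $\Her(x)$ --- while the translation of the remaining two conditions into Definition \ref{SRP} is bookkeeping.
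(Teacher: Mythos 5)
Your argument is correct and is essentially the same as the paper's own, which simply cites Lemma 1.13 of \cite{Ph-F}: there the proof is exactly this contrapositive, namely that if $\Her(x)$ contains no nonzero projection, the comparison condition forces $1-e=0$, so the Rokhlin projections sum to $1$ and conditions (1) and (2) of Definition \ref{TRP} become Definition \ref{SRP}. Your additional step unwinding Blackadar's comparison --- showing $q=y^*y$ is a genuine projection lying in $\Her(x)$, hence zero --- is sound and is precisely what is needed to reconcile Definition \ref{B-C} with the Murray--von Neumann formulation of condition (3) used in \cite{Ph-F}.
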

Not surprisingly we will mostly deal with C*-algebras with Property (SP) when studying tracial Rokhlin property. So we present some facts about Property (SP) here.
\begin{dfn}\label{Fun1}
Let $0\leq\sm_2<\sm_1\leq 1$ be two positive numbers. The function $f^{\sm_1}_{\sm_2}$ is a piecewise linear function defined by:
\begin{equation*}
f^{\sm_1}_{\sm_2}(x)=
\begin{cases}
0 & \text{if\quad} x<\sm_2\\
\frac{x-\sm_2}{\sm_1-\sm_2} & \text{if\quad} \sm_2\leq x \leq \sm_1\\
1 & \text{if\quad} x>\sm_1
\end{cases}
\end{equation*}
\end{dfn}

\begin{lem}
  \label{SP}
Let $A$ be a \ca\ with Property (SP), let $x\in A$ be a positive element with norm 1, and let $\ep>0$. Then there exists a non-zero projection $p\in \overline{xAx}$ such that for any positive element $q\leq p$ with$ \|q\|=1$, we have:
\begin{equation*}
\|qx^{1/2}-q\|\leq\ep, \quad \|x^{1/2}q-q\|\leq\ep.
\end{equation*}
\end{lem}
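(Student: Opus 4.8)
The plan is to use the continuous functional calculus on $x$ to isolate a hereditary subalgebra supported near the top of $\spec(x)$, invoke Property (SP) to extract a projection there, and then exploit that $t^{1/2}$ is uniformly close to $1$ on that part of the spectrum.

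First I would use continuity of $t \mapsto t^{1/2}$ to choose $\sm_3 < 1$ so close to $1$ that $1 - \sm_3^{1/2} < \ep$, and then fix $\sm_3 < \sm_2 < \sm_1 \leq 1$. Set $e = f^{\sm_1}_{\sm_2}(x)$. Since $\|x\| = 1$ and $x$ is positive we have $1 \in \spec(x)$, and $f^{\sm_1}_{\sm_2}(1) = 1$, so $e \neq 0$; moreover $f^{\sm_1}_{\sm_2}(0) = 0$ forces $e \in \overline{xAx}$. Hence $B = \overline{eAe}$ is a nonzero hereditary subalgebra contained in $\overline{xAx}$, and Property (SP) supplies a nonzero projection $p \in B$. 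This $p$ is the projection claimed in the lemma, and any positive $q \leq p$ with $\|q\| = 1$ lies in $B$ because $B$ is hereditary.

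The technical heart is to manufacture a function of $x$ that is simultaneously a unit on $B$ and close to $1$ when paired against $x^{1/2}$. For this I would set $g = f^{\sm_2}_{\sm_3}(x)$. On the support of $e$, namely where $x > \sm_2$, one has $g = 1$, so $g e = e$; a routine limiting argument over $\overline{eAe}$ then gives $g b = b = b g$ for every $b \in B$, in particular $g q = q = q g$. On the other hand the scalar function $t \mapsto g(t)(1 - t^{1/2})$ vanishes for $t \leq \sm_3$ and is bounded by $1 - \sm_3^{1/2}$ for $t > \sm_3$, so $\|g(x)(1 - x^{1/2})\| \leq 1 - \sm_3^{1/2} < \ep$.

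Combining these, for positive $q \leq p$ with $\|q\| = 1$ I would write $q = q\,g(x)$ and estimate $\|q x^{1/2} - q\| = \|q\,g(x)(x^{1/2} - 1)\| \leq \|q\|\,\|g(x)(x^{1/2} - 1)\| < \ep$, and symmetrically $\|x^{1/2} q - q\| < \ep$ using $q = g(x)\,q$ together with the fact that $g(x)$ commutes with $x^{1/2}$. The main obstacle is precisely this last bridging step: the element $q$ is controlled only through the order relation $q \leq p$ and need not itself be a function of $x$, so the functional-calculus bound on $x^{1/2}$ cannot be applied to $q$ directly; the device of an auxiliary $g(x)$ that acts as a genuine unit on the hereditary subalgebra $B$ is what transfers the concrete spectral estimate to the abstractly obtained projection $p$ and hence to $q$.
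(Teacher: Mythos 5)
Your proof is correct and follows essentially the same route as the paper: a bump function of $x$ supported near the top of the spectrum yields via Property (SP) a nonzero projection $p$ in a hereditary subalgebra of $\overline{xAx}$, and a second spectral function acting as a local unit on that subalgebra transfers the functional-calculus estimate to any positive $q\leq p$. The only (cosmetic) difference is that you estimate $\|g(x)(1-x^{1/2})\|$ directly against $x^{1/2}$, whereas the paper first reduces to $x$ via $\Her(x)=\Her(x^{1/2})$ and uses a function $f$ with $fg=g$ and $\|f(x)-x\|\leq\ep$; your variant even sidesteps a small typo in the paper's choice of $f$.
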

\begin{proof}
(See Lemma 1.14 of ~\cite{Ph-F}.) Since $\Her(x)=\Her(x^{1/2})$, it suffices to prove the inequalities with $x$ in place of $x^{1/2}.$  \Wolog\ Assume that $\ep<1/2$. Let $f=f_{0}^{\ep}$ ,$g=f_{1-\ep}^1$. Then we have:
\begin{equation*}
fg=g,\quad \|f(x)-x\|\leq \ep,\quad \overline{g(x)Ag(x)}\subset \overline{xAx}=\overline{f(x)Af(x)}. 
\end{equation*}
Now $fg=g$ implies that for any $c\in \overline{g(x)Ag(x)}$, $cf(x)=f(x)c=c$. Since $A$ has Property (SP), we can choose a nonzero projection $p\in \overline{g(x)Ag(x)}\subset \overline{xAx}$. Then for any positive element $q\leq p$,  $\|qx-q\|=\|qx-qf(x)\|\leq \ep$ and similarly $\|xq-q\|\leq \ep$. 
\end{proof}

A C*-algebra is elementary if it's a finite dimensional C*-algebra or the C*-algebra of compact operators.
\begin{lem}
 \label{SP2}
Let $A$ be a non-elementary simple C*-algebra with Property (SP). Then for any non-zero projection $p$ in $A$ and any $n\in \N$, there exists $n$ mutually orthogonal sub-projections $p_i$ of $p$ which are mutually Murray-von Neumann equivalent. 
\end{lem}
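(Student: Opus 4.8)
The plan is to reduce everything to a single ``halving'' step and then iterate. Concretely, I will first prove a halving lemma: every \nzp\ $q$ in $A$ contains two orthogonal, \mvnt, nonzero subprojections. Granting it, I obtain the full statement by repeated doubling. Starting from $p$ itself, suppose I already have mutually orthogonal, mutually \mvnt\ nonzero subprojections $p_1,\dots,p_m\le p$, with partial isometries $v_i$ satisfying $v_i^*v_i=p_1$ and $v_iv_i^*=p_i$ (so $v_1=p_1$). I apply the halving lemma to $p_1$ to get orthogonal equivalent $a,b\le p_1$, and transport them by setting $a_i=v_iav_i^*$ and $b_i=v_ibv_i^*$. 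A direct check shows each $a_i,b_i$ is a \pj\ with $a_i\sim a$ and $b_i\sim b$ via $v_i$; since the $p_i$ are mutually orthogonal and $a\perp b$, the $2m$ projections $\{a_i,b_i\}$ are mutually orthogonal subprojections of $p$, and they are all mutually \mvnt\ (each $a_i$ to $a$, each $b_i$ to $b$, and $a$ to $b$). Thus the number of mutually orthogonal equivalent subprojections I can produce doubles at each step, so after finitely many steps it exceeds $n$; discarding the extras leaves exactly $n$.

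For the halving lemma I use two inputs. First, because $A$ is simple and non-elementary it has no nonzero minimal projection (a simple \ca\ with a minimal projection is elementary), so $qAq\ne\C q$. Picking a self-adjoint $a\in qAq$ that is not a scalar multiple of $q$, its spectrum has at least two points, and continuous functional calculus produces orthogonal nonzero positive elements $b_1,b_2\in qAq$. By Property (SP) the orthogonal \hsa s $\overline{b_1Ab_1}$ and $\overline{b_2Ab_2}$ contain \nzp s $e_1,e_2$, which are then orthogonal subprojections of $q$. It remains to cut $e_1,e_2$ down to equivalent pieces.

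Second, I make $e_1$ and $e_2$ equivalent after cutting down, using simplicity together with Lemma \ref{SP}. Since $A$ is simple, $e_1Ae_2\ne 0$, so I choose $0\ne x\in e_1Ae_2$. Applying Lemma \ref{SP} to the norm-one positive element $x^*x/\|x^*x\|\in e_2Ae_2$ with a small $\ep<1/2$ yields a \nzp\ $f_2\in\overline{x^*xAx^*x}\subseteq e_2Ae_2$ that is almost fixed by $(x^*x)^{1/2}$; consequently, for $y=xf_2$, the element $y^*y=f_2x^*xf_2$ lies within a controlled distance of $f_2$ and hence is invertible in the corner $f_2Af_2$. Then $v=y(y^*y)^{-1/2}$, with the inverse taken in $f_2Af_2$, is a partial isometry satisfying $v^*v=f_2\le e_2$ and $f_1:=vv^*\le e_1$, so $f_1\sim f_2$. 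As $e_1\perp e_2$, the projections $f_1,f_2$ are orthogonal, equivalent, nonzero subprojections of $q$, which proves the halving lemma.

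I expect the third paragraph to be the main obstacle: producing a genuine \mvnc\ (a partial isometry), rather than mere Blackadar subequivalence, hinges on the invertibility of $y^*y$ in the corner $f_2Af_2$, which is exactly what Lemma \ref{SP} is arranged to supply. By contrast, the non-minimality input and the doubling bookkeeping in the first two paragraphs are comparatively routine.
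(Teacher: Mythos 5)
Your proof is correct, but note that the paper does not actually prove this lemma at all: its ``proof'' is a citation to Lemma 3.5.7, p.~142 of \cite{Lin-C}, so your argument is a self-contained replacement. In substance you have reconstructed the standard argument underlying Lin's lemma: simplicity plus non-elementarity rules out minimal projections, functional calculus on a non-scalar self-adjoint element of $qAq$ plus Property (SP) produces two orthogonal nonzero subprojections $e_1,e_2\le q$, and a cut-down-plus-polar-decomposition step makes pieces of them equivalent. That last step is exactly the content of Lin's Lemma 3.5.6 (which this paper invokes elsewhere, e.g.\ in Lemma \ref{SP-SRP2} and Lemma \ref{RR0}); you re-derive it from the paper's Lemma \ref{SP}, and the derivation works: taking the test element equal to $f_2$ itself in Lemma \ref{SP} gives $\|f_2zf_2-f_2\|\le 2\ep<1$ for the normalized $z$, so $y^*y=f_2x^*xf_2\ge (1-2\ep)\|x^*x\|f_2$ is invertible in the corner $f_2Af_2$ and $v=y(y^*y)^{-1/2}$ is a genuine partial isometry with $v^*v=f_2$ and $vv^*\le e_1$. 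One cosmetic slip: as written, $y^*y$ is close to $\|x^*x\|f_2$ rather than to $f_2$; normalizing $\|x\|=1$ at the outset (harmless, since $x$ is only required to be a nonzero element of $e_1Ae_2$) makes the statement exact, and invertibility in the corner holds either way. Your halving-and-doubling bookkeeping, transporting the halved pieces $a,b\le p_1$ along the partial isometries $v_i$, is also sound and is a slightly different organization from Lin's, which extracts several orthogonal projections at once and then equalizes them pairwise; your version needs only two spectral points per step at the cost of iterating, and both routes rest on the same toolkit.
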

\begin{proof}
 Lemma 3.5.7, p142 of ~\cite{Lin-C} 
\end{proof}

\begin{lem}\label{SP-Lim}
If there exists an increasing sequence $\{A_n\}$ such that\\
 $A=\overline{\cup_{n\in \N}A_n}$ and each $A_n$ has Property (SP), then $A$ itself has Property (SP).
\end{lem}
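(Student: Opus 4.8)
The plan is to verify Property (SP) directly from the definition. Let $B\subseteq A$ be a nonzero \hsa; I must produce a \nzp\ in $B$. I would pick a positive $x\in B$ with $\|x\|=1$, and note that since $B$ is hereditary we have $\overline{xAx}\subseteq B$, so it suffices to find a \pj\ in $\overline{xAx}$. The only resources available are that $x$ lies in the dense subalgebra $\bigcup_n A_n$ and that each $A_n$ has Property (SP); so the natural strategy is to replace $x$ by a nearby positive element of some $A_m$, manufacture a \nzp\ inside a \hsa\ of $A_m$ using Property (SP) of $A_m$, and then push that projection back into $\overline{xAx}$.

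First I would fix a small $\dt>0$ with $\dt<1/4$ and choose $m$ together with a self-adjoint $a\in A_m$ with $\|a-x\|<\dt$; replacing $a$ by its positive part $a_+=\max(a,0)\in A_m$ costs only another $\dt$, so I may assume $0\le a\in A_m$ with $\|a-x\|<2\dt$, whence $\|a\|>1-2\dt>2\dt$ and $b=(a-2\dt)_+$ is a nonzero positive element of $A_m$. The key perturbation step is R{\o}rdam's lemma: from $\|a-x\|<2\dt$ it yields a contraction $d\in A$ with $b=(a-2\dt)_+=dxd^*$. Setting $u=dx^{1/2}$ gives $b=uu^*$ and $u^*u=x^{1/2}d^*dx^{1/2}\le x$, so that $\overline{u^*uAu^*u}\subseteq\overline{xAx}$.

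Now I would invoke Property (SP) of $A_m$: since $b$ is a nonzero positive element of $A_m$, the \hsa\ $\overline{bA_mb}$ contains a \nzp\ $q$, and $q\in\overline{bAb}=\overline{uu^*Auu^*}$. Finally I transfer $q$ across the standard isomorphism $\overline{uu^*Auu^*}\cong\overline{u^*uAu^*u}$ implemented by the partial isometry $v$ in the polar decomposition $u=v|u|$. Concretely, $v^*u=u^*v=(u^*u)^{1/2}$, so that $v^*(uu^*cuu^*)v=(u^*u)^{1/2}u^*cu(u^*u)^{1/2}$ for $c\in A$; hence $q'=v^*qv$ lies in $\overline{u^*uAu^*u}$, is again a projection (using $qvv^*=q$), and is nonzero since $q=vq'v^*$. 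Thus $q'$ is a \nzp\ in $\overline{u^*uAu^*u}\subseteq\overline{xAx}\subseteq B$, as required.

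The main obstacle is exactly this last transfer: guaranteeing that the projection supplied by Property (SP) in $A_m$ genuinely lands inside the prescribed \hsa\ $\overline{xAx}$ of $A$, rather than merely near it. A naive ``approximate and perturb the projection'' approach is doomed, because $\overline{xAx}$ need carry no projection close to $q$. What rescues the argument is that cutting down by $(a-2\dt)_+$ and applying R{\o}rdam's lemma forces $b$ to factor through $x$, so the Murray--von Neumann equivalence $uu^*\sim u^*u$ carries $q$ into a \hsa\ that is honestly contained in $\overline{xAx}$.
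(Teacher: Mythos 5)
Your argument is correct, and it reaches the conclusion by a genuinely different mechanism than the paper's proof. Both share the same skeleton: approximate $x$ by a positive element $a$ of some $A_m$, cut down to $b=(a-2\dt)_+\neq 0$ so that Property (SP) of $A_m$ produces a nonzero projection $q\in\Her(b)$, and then move $q$ into $\Her(x)$ via an element of the form $(\cdot)\,x^{1/2}$. The difference lies entirely in the transfer step. The paper works perturbatively: with $c=zx^{1/2}$ it only obtains $cc^*\geq z^3$, so its projection $p$ satisfies merely $\|pcc^*-p\|<7\ep$; consequently $c^*pc\in\overline{xAx}$ is only an \emph{approximate} projection, and the proof must run the standard ``close to a projection'' perturbation inside $\overline{xAx}$, with explicit constants ($\ep=1/28$, $7\ep<1/4$) and a separate norm estimate to rule out that the perturbed projection is zero. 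You instead invoke R{\o}rdam's lemma to get the \emph{exact} factorization $b=dxd^*$, so that with $u=dx^{1/2}$ one has $uu^*=b$ on the nose and $u^*u\leq x$, and $q\in\Her(uu^*)$ transfers exactly through Cuntz's isomorphism $\Her(uu^*)\cong\Her(u^*u)$ --- an ingredient the paper itself uses elsewhere (proof of Lemma \ref{SPtensor}, citing \cite{Cu1}) --- landing a genuine projection $q'=v^*qv$ honestly inside $\Her(u^*u)\subseteq\overline{xAx}$ with no epsilon-management at all. Your route buys exactness and a cleaner argument at the cost of one external tool (R{\o}rdam's lemma), while the paper's is self-contained, using only functional calculus and projection perturbation. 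One cosmetic remark: your closing claim that ``approximate and perturb is doomed'' is accurate for perturbing $q$ itself (there need be no projection of $\overline{xAx}$ near $q$), but the paper's perturbation is performed \emph{after} conjugating into $\overline{xAx}$, which is exactly why it succeeds; so the two proofs are not as far apart in spirit as that remark suggests, though yours does eliminate the perturbation entirely.
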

\begin{proof}
(Private communication with C. Philips) Let $x\in A$ be a positive element of norm 1. We need to find a non-zero projection in $\overline{xAx}$. Let $\ep=1/28.$  Then we can find some $n\in \mathbb{N^{+}}$ and a positive element $y\in A_n$ such that $\|y-x\|<\ep$. This implies $y-\ep<x$. Let $z$ be the positive part of $y-\ep$. We have:
\begin{equation*}
z\neq 0, z(y-\ep)z=z^3,\quad\|z-(y-\ep)\|<2\ep \quad\text{and}\quad \|z-x\|<4\ep.
\end{equation*}
Let $c=zx^{1/2}$, then $1-4\ep< \|c\|\leq 1$. Since $cc^*=zxz\geq z(y-\ep)z\geq z^3$, we have:
\begin{equation*}
 \overline{cc^*Acc^*}\supset \overline{z^3Az^3}\supset \overline{zAz}\supset \overline{zA_nz}.
\end{equation*}
Let $f=f_{\ep}^{1-\ep}$ and $g=f_{1-\ep}^1.$ (See Definition \ref{Fun1}.) Since $A_n$ has Property (SP) by assumption, we can find non-zero projection $p\in \overline{g(z)A_ng(z)}\subset\overline{cc^*Acc^*}$. Now $f(z)g(z)=g(z)$ implies that $pf(z)=pg(z)f(z)=p$. Use the relation $\|z-x\|<4\ep$ and $\|pz-p\|=\|pz-pf(z)\|<\ep$ repeatedly, we can show that $\|pcc^*-p\|=\|pzxz-p\|<7\ep$.  Take the adjoint we get $\|cc^*p-p\|<7\ep$. Then $c^*pc$ is approximately a projection: $\|c^*pcc^*pc-c^*pc\|<7\ep<1/4$. Since $c^*pc=x^{1/2}zpzx^{1/2}\in \overline{xAx}$, there exist a projection $q\in \overline{xAx}$ such that $\|q-c^*pc\|<1/4$. The projection $q$ is non-zero, otherwise
\begin{equation*}
\|c^*pc\|=\|pcc^*p\|\geq\|p\|-\|(pcc^*-p)p\|=3/4>1/4>\|c^*pc\|.
\end{equation*} 
Which is a contradiction. 
\end{proof}

The following lemma is very useful in dealing with tensor products:
\begin{lem}
 \label{Slice}
(Kirchberg's Slice Lemma). Let $A$ and $B$ be C*-algebras, and let $D$ be a non-zero hereditary sub-C*-algebra of the minimal tensor product $A\otimes B$. Then there exists a non-zero element $z$ in $A\otimes B$ such that $z^*z$ is an elementary tensor $a\otimes b$, for some $a\in A_{+}$ and $b\in B_{+}$, and $zz^*$ belongs to $D$.
\end{lem}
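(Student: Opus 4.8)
The plan is to reduce, by a norm-small perturbation, to a situation that can be resolved by excising a single pure state on one of the tensor factors. \Wolog\ I would choose a positive $d\in D$ with $\|d\|=1$ and pick a finite sum of elementary tensors $c=\sum_{j=1}^{n}x_{j}\otimes y_{j}$ with $\|d-c\|<\ep$; the point of passing to $c$ is that every subsequent estimate need only be controlled on the finite sets $\{x_{j}\}\subset A$ and $\{y_{j}\}\subset B$. The one external structural fact I would invoke is that for the \emph{minimal} tensor product the left slice maps $\ph\otimes\id$ are jointly faithful, so from $d\neq 0$ I obtain a pure state $\ph$ on $A$ with $b_{1}:=(\ph\otimes\id)(d)\neq 0$ in $B_{+}$.

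Next I would apply the excision theorem (Akemann--Anderson--Pedersen) to the pure state $\ph$ and the finite set $\{x_{j}\}$, producing a positive norm-one $a\in A$ with $\|a^{1/2}x_{j}a^{1/2}-\ph(x_{j})a\|$ as small as desired for every $j$. Sandwiching $c$ by $a^{1/2}\otimes 1$ and summing gives $(a^{1/2}\otimes 1)\,c\,(a^{1/2}\otimes 1)\approx a\otimes(\ph\otimes\id)(c)\approx a\otimes b_{1}$, and since $c\approx d$ this yields
\[
\|(a^{1/2}\otimes 1)\,d\,(a^{1/2}\otimes 1)-a\otimes b_{1}\|<\et ,
\]
with $\et$ governed by $\ep$ and the excision error. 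Putting $z_{0}=d^{1/2}(a^{1/2}\otimes 1)$, I find $z_{0}^{*}z_{0}=(a^{1/2}\otimes 1)\,d\,(a^{1/2}\otimes 1)$ is within $\et$ of the elementary tensor $a\otimes b_{1}$, while $z_{0}z_{0}^{*}=d^{1/2}(a\otimes 1)d^{1/2}\leq d$, so $z_{0}z_{0}^{*}\in D$ because $D$ is a \hsa\ containing $d$. Thus $z_{0}$ already has both required properties, \emph{except} that $z_{0}^{*}z_{0}$ is only approximately an elementary tensor.

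Upgrading this approximate equality to an exact one is the step I expect to be the main obstacle, and it is genuinely unavoidable: already in $M_{2}\otimes M_{2}$, with $D$ the corner cut down by a rank-one projection whose range is spanned by an entangled vector, there is no nonzero positive elementary tensor dominated in the order sense by a multiple of $d$, so one cannot simply take $z=(a\otimes b_{1})^{1/2}$ and a true Murray--von Neumann movement is forced. My plan is to invoke the standard perturbation lemma (if $x,y\geq 0$ and $\|x-y\|<\et$, then $(y-\et)_{+}=vxv^{*}$ for some contraction $v$) with $x=z_{0}^{*}z_{0}$ and $y=a\otimes b_{1}$, and to set $z=z_{0}v^{*}$; then $z^{*}z=(a\otimes b_{1}-\et)_{+}$, which is nonzero once $\et<\|b_{1}\|$, and $zz^{*}=z_{0}v^{*}vz_{0}^{*}\leq z_{0}z_{0}^{*}\leq d$, so $zz^{*}\in D$. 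The one delicacy is that the cut-down $(a\otimes b_{1}-\et)_{+}$ of a general positive elementary tensor need not itself be an elementary tensor. When a factor can be arranged to be a projection --- say $b_{1}$, so that $(a\otimes b_{1}-\et)_{+}=(a-\et)_{+}\otimes b_{1}$ --- the argument closes at once; in the general, projectionless case I would instead complete it by a successive-approximation scheme that reduces the error at each stage and produces an exact elementary tensor $a'\otimes b'$ in the limit, the membership $zz^{*}\in D$ being preserved because $D$ is closed. Establishing convergence of that scheme is the delicate point.
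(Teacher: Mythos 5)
Your skeleton up to the perturbation step is sound, and it is essentially the standard argument behind the reference the paper itself relies on (the paper gives no internal proof of Lemma \ref{Slice}; it cites Lemma 4.1.9 of \cite{RS1}): joint faithfulness of the slice maps $\varphi\otimes\id$ over pure states on the minimal tensor product, excision of $\varphi$ against the finite set $\{x_j\}$, the element $z_0=d^{1/2}(a^{1/2}\otimes 1)$ with $z_0z_0^*\leq d$, and R{\o}rdam's perturbation lemma producing $z$ with $z^*z=(a\otimes b_1-\et)_+$ and $zz^*\in D$; your $M_2\otimes M_2$ example correctly shows that a genuine movement is unavoidable. But the proof is not complete: the final step --- and the lemma's entire content is that $z^*z$ is \emph{exactly} an elementary tensor --- is left to an unspecified ``successive-approximation scheme'' whose convergence you yourself concede is unestablished. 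Nothing in the proposal controls the iterates in norm, shows that the approximating elementary tensors converge, or even that a limit of elementary tensors of the relevant form is again elementary; as written this is a genuine gap, not a routine verification.

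The gap has a one-shot repair, with no iteration, and it is how the cited proof concludes. Since $a\otimes 1$ and $1\otimes b_1$ are commuting positive elements, work in the commutative C*-subalgebra they generate: if $\et<\dt^2$, then the function $(s,t)\mapsto (s-\dt)_+(t-\dt)_+$ vanishes wherever $(st-\et)_+=0$, because $st\leq\et<\dt^2$ forces $\min(s,t)<\dt$. Hence
\begin{equation*}
e':=(a-\dt)_+\otimes(b_1-\dt)_+\in \Her\bigl((a\otimes b_1-\et)_+\bigr)=\Her(z^*z),
\end{equation*}
and $e'$ is a nonzero elementary tensor of positive elements once $\dt<\min\{1,\|b_1\|\}$ and the excision is run with accuracy $\et<\dt^2$. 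Now transfer $e'$ through the canonical isomorphism $\Her(z^*z)\cong\Her(zz^*)$ --- the very fact the paper quotes from \cite{Cu1} (p.\ 218) in the proof of Lemma \ref{SPtensor}: the limit $z'=\lim_{n\to\infty} z\,(z^*z+1/n)^{-1/2}\,e'^{1/2}$ exists, satisfies $z'^*z'=e'$, and $z'z'^*\in\Her(zz^*)\subseteq D$ since $D$ is hereditary. Replacing your projected iteration by this commuting functional-calculus containment closes the argument; your special case (where $b_1$ is a projection, so $(a\otimes b_1-\et)_+=(a-\et)_+\otimes b_1$) is the degenerate instance of the same device.
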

See lemma 4.1.9, p68 of  book ~\cite{RS1} for a proof. Note that the definition of $a$, $b$ in the proof shows that they are all positive. As a consequence, we can show the following:
\begin{lem}\label{SPtensor}
Let $A$,$B$ be two C*-algebras with Property (SP), then $A \otimes B$ has Property (SP).
\end{lem}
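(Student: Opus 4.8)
The plan is to unwind the definition of Property (SP): I take an arbitrary non-zero hereditary subalgebra $D\subseteq A\otimes B$ and produce a non-zero projection inside it. The whole strategy is to use Kirchberg's Slice Lemma (Lemma \ref{Slice}) to push the problem down onto the two tensor factors, where Property (SP) is available, and then transport the resulting projection back up into $D$.

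First I would apply Lemma \ref{Slice} to $D$ to obtain a non-zero element $z\in A\otimes B$ with $z^*z=a\otimes b$ for positive elements $a\in A$ and $b\in B$, and with $zz^*\in D$. Since $z\neq 0$ we have $z^*z\neq 0$, hence $a\neq 0$ and $b\neq 0$ (the remark following Lemma \ref{Slice} already guarantees positivity, and nonvanishing is forced by $a\otimes b\neq 0$). In particular $\overline{aAa}$ and $\overline{bBb}$ are non-zero hereditary subalgebras of $A$ and of $B$. Invoking Property (SP) of $A$ and of $B$, I choose non-zero projections $p\in\overline{aAa}$ and $q\in\overline{bBb}$. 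Then $p\otimes q$ is a non-zero projection, and since $\overline{aAa}\otimes\overline{bBb}\subseteq\overline{(a\otimes b)(A\otimes B)(a\otimes b)}=\overline{z^*z(A\otimes B)z^*z}$, it lies in the hereditary subalgebra generated by $z^*z$.

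It remains to move this projection into $D$. The element $z$ implements an isomorphism between the hereditary subalgebras $\overline{z^*z(A\otimes B)z^*z}$ and $\overline{zz^*(A\otimes B)zz^*}$: writing the polar decomposition $z=v(z^*z)^{1/2}$ in the bidual, the map $\Ad(v)\colon x\mapsto vxv^*$ carries the former onto the latter, since $v^*v$ and $vv^*$ are the range projections of $z^*z$ and $zz^*$ respectively. As an isomorphism it sends non-zero projections to non-zero projections, so $v(p\otimes q)v^*$ is a non-zero projection in $\overline{zz^*(A\otimes B)zz^*}$. Because $zz^*\in D$ and $D$ is hereditary, $\overline{zz^*(A\otimes B)zz^*}\subseteq D$, and this gives the desired non-zero projection in $D$.

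I expect the last step to be the main obstacle, in the sense that it is the only place where one must supply a fact beyond the Slice Lemma and the definition of Property (SP): namely that $z^*z$ and $zz^*$ generate isomorphic hereditary subalgebras, so that a projection located on the ``$z^*z$ side'' can be transported to the ``$zz^*$ side'' where $D$ actually lives. This is standard, and I would either cite it or verify it briefly via the partial isometry in the polar decomposition; everything preceding it is routine.
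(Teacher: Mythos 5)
Your proposal is correct and follows essentially the same route as the paper's proof: apply Kirchberg's Slice Lemma to $D$, use Property (SP) in each factor to produce projections $p$ and $q$ with $p\otimes q\in\Her(z^*z)$, and transport this projection into $D$ via the isomorphism $\Her(z^*z)\cong\Her(zz^*)$. The only difference is cosmetic: the paper cites Cuntz for that isomorphism, while you verify it directly with the partial isometry from the polar decomposition of $z$ in the bidual, which is a perfectly standard justification of the same fact.
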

\begin{proof}
Let $D$ be a non-zero hereditary sub-C*-algebra of $A\otimes B$. By Lemma \ref{Slice}, we can find a non-zero element $z$ in $A\otimes B$ such that $z^*z$ is an elementary tensor $a\otimes b$ for some $a\in A_{+}$ and $b\in B_{+}$, and $zz^*\in D$. Since both $A$ and $B$ have Property (SP), there exist non-zero projections $p$, $q$ in $\Her(a)$ and $\Her(b)$, respectively. Then $p\otimes q$ is a non-zero projection in $\Her(a\otimes b)=Her(z^*z)$. But $\Her(z^*z)$ is isomorphic to $\Her(zz^*)\subset D$ (See p 218 of \cite{Cu1}), therefore $D$ contains a non-zero projection.
\end{proof}

Before we systematically study the inheritance of tracial Rokhlin property, let's first present some basic properties of Blackdar's comparison first. (See Definition \ref{B-C}. We have the following equivalent definition:
\begin{lem}
Let $a$, $b$ be two positive elements in a \ca\ $A$. Let $A''$ be the enveloping von-Neumann algebra of $A$. Then $a\pa b$ \ifo\ there exist some partial isometry  $v\in A''$ such that, $v\Her(a)$, $\Her(a)v$ are subsets of $A$, $vv^*=p_{a}$, where $p_{a}$ is the range projection of $a$ in $A''$, and $v^*\Her(a)v\subset \Her(b)$. $a\sim_s b$ \ifo\ $v^*\Her(a)v=\Her(b)$.
\end{lem}
See Proposition 4.3 and Proposition 4.6 of \cite{ORT} for a proof.
\begin{ntn}
By $a_1\oplus a_2\oplus\cdots\oplus a_n$ we mean $\diag\{a_1,a_2,..., a_n\}$ and $n\odot a$ means $a\oplus a\oplus\cdots\oplus a.$
We write $a_1\oplus a_2\oplus\cdots\oplus a_n\leq b_1\oplus b_2\oplus\cdots\oplus b_m$ if and only if $\diag\{a_1,a_2,..., a_n,0,..0\}]\pa\diag\{b_1,b_2,..., b_m, 0,...,0\}$ in $M_{m+n}(A).$ 
\end{ntn}

\begin{prp}(Proposition 3.5.3, p 141,\cite{Lin-C}) Let $A$ be a C*-algebra.
\begin{enumerate}[(i)]
\item
If $0\leq a\leq b$, then $a\pa b.$
\item
If $p$ and $q$ are two projections in $A$, then $p\pa q$ \ifo\ $p$ is sub-equivalent to $q$ in the sense of Murray and Von Neumann, and $p\sim_s q$ if and only if $p$ and $q$ are Murray-von Neumann equivalent.
\item
Let $B$ be a hereditary subalgebra of $A$ and $a,b\in A$. Then $a\pa b$ in $A$ if and only if $a\pa b$ in $B$, and  $a\sim_s b$ in $A$ if and only if $a\sim_s b$ in $B$.
\item
Let $a_1, ..., a_n$ be positive elements in $A$. Then $(\sum_{i=1}^n{a_i})\pa \oplus_{i=1}^n {a_i}.$ If $a_ia_j=0,\forall i\neq j$, then $(\sum_{i=1}^n{a_i})\sim_s\oplus_{i=1}^n {a_i}$
\end{enumerate}
\end{prp}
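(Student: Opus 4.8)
The plan is to handle the four parts separately, disposing of the reflexivity-type claims (i) and (iv) by direct functional-calculus constructions, reducing (ii) to the spectral relationship between $xx^*$ and $x^*x$, and isolating (iii) as the genuinely technical point. Throughout I would lean on the observation that $\sim_s$ is reflexive: for any positive $a$ the element $x=a^{1/2}$ satisfies $xx^*=x^*x=a$, so $\Her(x^*x)=\Her(a)$ and hence $a\sim_s a$. For (i), if $0\leq a\leq b$ then $a\in\Her(b)=\overline{bAb}$, so taking $a'=a$ and using $a\sim_s a$ gives $a\pa b$ at once.

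For (iv) I would form the element $x\in M_n(A)$ whose first row is $(a_1^{1/2},a_2^{1/2},\dots,a_n^{1/2})$ and whose remaining entries vanish. Then $xx^*=\diag(\sum_i a_i,0,\dots,0)$, which is the image of $\sum_i a_i$ under the corner embedding $A\hookrightarrow M_n(A)$, while $x^*x=(a_i^{1/2}a_j^{1/2})_{i,j}$. Writing $x^*x=D^{1/2}ED^{1/2}$ with $D=\diag(a_1,\dots,a_n)$ and $E$ the rank-one all-ones matrix gives $x^*x\leq nD$, so $\Her(x^*x)\subseteq\Her(\oplus_i a_i)$ and in particular $x^*x\in\Her(\oplus_i a_i)$. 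Thus $x$ witnesses $(\sum_i a_i)\sim_s x^*x$ with $x^*x\in\Her(\oplus_i a_i)$, i.e.\ $(\sum_i a_i)\pa\oplus_i a_i$. When $a_ia_j=0$ for $i\neq j$, the cross terms $a_i^{1/2}a_j^{1/2}$ vanish, so $x^*x=\oplus_i a_i$ exactly and $x$ implements $(\sum_i a_i)\sim_s\oplus_i a_i$ directly.

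For (ii), suppose $p\sim_s q$ and choose $x$ with $xx^*=p$ and $\Her(x^*x)=\Her(q)$. Since $p$ is a projection, $x^*x$ has spectrum contained in $\{0,1\}$ and so is itself a projection $r$; equality of the hereditary subalgebras $\Her(r)=\Her(q)$ forces $r=q$ (equal hereditary subalgebras have the same identity element), so $x$ is a partial isometry implementing Murray--von Neumann equivalence of $p$ and $q$. The converse is the standard partial-isometry construction. For $\pa$: unwinding Definition \ref{B-C}, $p\pa q$ produces $p'\in\Her(q)=qAq$ with $p\sim_s p'$, whence (as above) a projection $r=x^*x$ with $p\sim r$ and $r\in\Her(p')\subseteq qAq$, so $r\leq q$ and $p$ is subequivalent to $q$; conversely any subequivalence $p\sim r\leq q$ gives $r\in\Her(q)$ with $p\sim_s r$, hence $p\pa q$.

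Part (iii) is the main obstacle and where the real work lies. The direction ``in $B$ implies in $A$'' is immediate since the witnessing element and the hereditary subalgebra $\Her_B(b)=\Her_A(b)$ (using $b\in B$ and $BAB\subseteq B$) already sit inside $A$. The converse requires relocating the witness into $B$, and the key lemma is: if $B\subseteq A$ is hereditary and $xx^*,x^*x\in B$, then $x\in B$. I would prove this by taking an approximate unit $(u_\lambda)$ for $B$ and estimating $\|x-u_\lambda x\|^2=\|(1-u_\lambda)xx^*(1-u_\lambda)\|\to 0$, which uses $xx^*\in B$, together with the symmetric estimate $\|x-xu_\mu\|^2=\|(1-u_\mu)x^*x(1-u_\mu)\|\to 0$ from $x^*x\in B$; then $u_\lambda x u_\mu\to x$ with $u_\lambda x u_\mu\in BAB\subseteq B$, so $x\in B$. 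Given $a\pa b$ in $A$ with $a,b\in B$, we have $a'\in\Her_A(b)=\Her_B(b)\subseteq B$ and $x$ with $xx^*=a\in B$ and $x^*x\in\Her(a')\subseteq B$; the lemma yields $x\in B$, and since $\Her_A(x^*x)=\Her_B(x^*x)$ and $\Her_A(a')=\Her_B(a')$, the relation $a\sim_s a'$ and the membership $a'\in\Her_B(b)$ descend to $B$. The $\sim_s$ statement follows by the same argument with $\Her(x^*x)=\Her(a')$ read as an equality.
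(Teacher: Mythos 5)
Your proof is correct in all four parts. Note, however, that the paper offers no internal argument for this proposition at all: it is quoted verbatim from Lin's book (Proposition 3.5.3, p.~141 of \cite{Lin-C}) with only the citation as ``proof,'' so there is nothing in the paper itself to match your argument against. Compared with the standard treatment in the cited source (and with the companion lemma the paper states from \cite{ORT}, which characterizes $a\pa b$ via a partial isometry $v$ in the enveloping von Neumann algebra $A''$), your route through part (iii) is genuinely more elementary: you never leave $A$, replacing the bidual/range-projection machinery by the hereditary-subalgebra lemma that $xx^*, x^*x \in B$ forces $x\in B$, proved with approximate units via $\|x-u_\lambda x\|^2=\|(1-u_\lambda)xx^*(1-u_\lambda)\|$ and its adjoint twin --- this, together with the identity $\Her_A(c)=\Her_B(c)$ for positive $c\in B$, is exactly the right localization device, and your uses of it (transporting $a'$, $x$, and the equality $\Her(x^*x)=\Her(a')$ into $B$) are all sound. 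Parts (i), (ii), and (iv) are the standard functional-calculus arguments: reflexivity of $\sim_s$ via $x=a^{1/2}$, the spectral observation that $xx^*$ a projection forces $x^*x$ to be a projection and hence $x$ a partial isometry (with $\Her(r)=\Her(q)$ giving $r=q$ by uniqueness of units in $rAr=qAq$), and the row-vector witness with $x^*x=D^{1/2}ED^{1/2}\leq nD$ for (iv). The only cosmetic discrepancy is that the paper's Notation places the comparison $(\sum_i a_i)\pa\oplus_i a_i$ in $M_{n+1}(A)$ while you work in $M_n(A)$; padding your witness by a zero row and column (or invoking your own part (iii), since $M_n(A)$ sits as a hereditary corner of $M_{n+1}(A)$) reconciles the two, so this is bookkeeping rather than a gap.
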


Now we are ready to prove analogue results for tracial Rokhlin property:
\begin{prp}
Let $\af\colon G\rightarrow \Aut(A)$ be an action with the tracial Rokhlin property, and let $p$ be an invariant projection. Then the induced action $\af|_{pAp}$ has the tracial Rokhlin property. 
\end{prp}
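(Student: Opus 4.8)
The plan is to reuse, essentially verbatim, the compression argument from the strict Rokhlin version proved at the start of this section, and then to add the two verifications special to the tracial setting: the comparison condition and the norm condition. Set $n = \mathrm{Card}(G)$, and let a finite set $F \subset pAp$, a number $\ep > 0$, and a positive $x \in pAp$ with $\|x\| = 1$ be given; since $p \neq 0$, the corner $pAp$ is again infinite dimensional, simple and unital (with unit $p$), and $x$ is a norm-one positive element of $A$. First I would fix $\ep_0$ with $\ep_0 < 1/(2n)$ and $\ep_0 < \ep/(8n)$, and invoke semiprojectivity of $\mathbb{C}^n$ to get $\dt > 0$, shrunk so that also $\dt < \ep/(8n)$, with the property used in the strict case: mutually orthogonal projections $q_1, \dots, q_n$ in a unital \ca\ $B$ that $\dt$-commute with a projection $p \in B$ can be replaced by mutually orthogonal projections of $pBp$ within $\ep_0$ of $pq_jp$. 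I would then apply the tracial Rokhlin property of $\af$ on $A$ to the data $F \cup \{p\}$, $\dt$, and $x$, obtaining mutually orthogonal projections $q_g \in A$ with $\|\af_g(q_h) - q_{gh}\| < \dt$, with $\|q_g a - a q_g\| < \dt$ for $a \in F \cup \{p\}$, with $1 - q \pa x$, and with $\|qxq\| > 1 - \dt$, where $q = \sum_g q_g$. Because $\|pq_g - q_g p\| < \dt$, the semiprojectivity step produces mutually orthogonal $e_g \in pAp$ with $\|e_g - pq_gp\| < \ep_0$, and conditions (1) and (2) of Definition \ref{TRP} then follow from the same estimates as in the strict case, using $\af_g(p) = p$ and $pa = ap = a$.

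For the comparison condition I would set $e = \sum_g e_g \le p$ and $r = 1 - q$, so that $\|(p - e) - prp\| < n\ep_0$. The point to get around is that compression by $p$ need not preserve $\pa$; I would sidestep this by comparing the positive element $prp$ directly. Writing $w = rp$ gives $w^*w = prp$ and $ww^* = rpr \le r$, so $prp = w^*w \sim_s ww^* = rpr \pa r = 1 - q \pa x$. Since $p - e$ is a projection with $\|(p-e) - prp\| < n\ep_0 < 1/2$, the spectrum of $prp$ avoids $(n\ep_0, 1 - n\ep_0)$; choosing a continuous $h \colon [0,1] \to [0,1]$ vanishing on $[0, n\ep_0]$ and equal to $1$ on $[1 - n\ep_0, 1]$, the element $h(prp)$ is a projection in $\Her(prp)$ with $h(prp) \pa prp$ by monotonicity, and $\|(p-e) - h(prp)\| < 2n\ep_0 < 1$, so $p - e$ is \mvnt\ to $h(prp)$. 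Transitivity of $\pa$ then yields $p - e \pa x$ in $A$, hence in $pAp$ since $\pa$ is insensitive to the passage to the hereditary subalgebra $pAp$. I expect this to be the main obstacle, since it is where the discrepancy between the corner $pAp$ and the ambient $A$ must be absorbed.

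For the norm condition I would approximate $\|exe\|$ by $\|(pqp)x(pqp)\|$ at a cost of at most $2n\ep_0$. Using $px = xp = x$, a direct computation gives $(pqp)x(pqp) = (pq)x(qp)$, whereas the quantity controlled on $A$ is $qxq = (qp)x(pq)$, of norm $> 1 - \dt$. Since $\|pq - qp\| \le \sum_g \|pq_g - q_gp\| < n\dt$, interchanging $pq$ and $qp$ changes the norm by at most $2n\dt$, so $\|(pq)x(qp)\| > 1 - \dt - 2n\dt$ and hence $\|exe\| > 1 - \dt - 2n\dt - 2n\ep_0 > 1 - \ep$ by the choices of $\ep_0$ and $\dt$. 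All four conditions of Definition \ref{TRP} would then hold, so $\af|_{pAp}$ has the tracial Rokhlin property.
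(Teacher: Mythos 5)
Your proposal is correct and follows essentially the approach the paper itself relies on: the paper's proof is simply a citation to Lemma 3.7 of \cite{Ph-F}, whose argument is exactly this semiprojectivity perturbation of the strict-case proof, with the comparison condition transferred via the same device $prp = (rp)^*(rp) \sim_s rpr \leq r = 1-q \pa x$ and a projection-perturbation step, and the norm condition obtained by the same commutator estimates using $px = xp = x$. Your explicit functional-calculus construction of $h(prp)$ merely re-derives the standard fact (packaged as a lemma in \cite{Ph-F}) that a projection within distance less than $1/2$ of a positive element is Murray--von Neumann equivalent to a projection in its hereditary subalgebra, so the two proofs coincide in substance.
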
 
\begin{proof}
Lemma 3.7 of \cite{Ph-F}
\end{proof}

\begin{prp}
  Let $\af\colon G\to \Aut(A)$ be an inductive limit action (See Proposition \ref{SRP-Lim} for the definition.) Let $\af_n$ denote the induced action on $A_n$. If each $\af_n$ has the tracial Rokhlin property, then $\af$ has the tracial Rokhlin property. 
\end{prp}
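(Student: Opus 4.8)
The plan is to split into two cases according to Lemma \ref{SP-SRP}. Since each $\af_n$ has the tracial Rokhlin property, each $A_n$ is an infinite dimensional simple unital \ca, and Lemma \ref{SP-SRP} says that each $A_n$ either has Property (SP) or $\af_n$ has the strict Rokhlin property. If only finitely many $A_n$ have Property (SP), then on a tail of the sequence every $\af_n$ has the strict Rokhlin property; applying Proposition \ref{SRP-Lim} to that tail shows $\af$ has the strict Rokhlin property, and then, taking the Rokhlin projections with $e=\sum_{g}e_g=1$, we get $1-e=0\pa x$ and $\|exe\|=\|x\|=1$, so $\af$ has the tracial Rokhlin property. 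Hence the remaining case is that infinitely many $A_n$ have Property (SP); passing to that cofinal subsequence and applying Lemma \ref{SP-Lim}, we may assume $A$ has Property (SP). This dichotomy is the structural backbone of the proof.

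In the Property (SP) case, fix a finite set $F$, an $\ep>0$, and a positive $x$ with $\|x\|=1$. First I would apply Lemma \ref{SP} (with $q=p$) to produce a nonzero projection $p\in\overline{xAx}=\Her(x)$ with $\|pxp-p\|$ as small as we wish. Since $A=\overline{\cup_n A_n}$, for large $n$ I can perturb $p$ to a projection $r\in A_n$ with $\|r-p\|$ small, so that $r\sim p$ (Murray--von Neumann). The decisive idea is to feed the \emph{projection} $r$, rather than an approximation of $x$, to the tracial Rokhlin property of $\af_n$: I choose a finite $\tilde F\subset A_n$ approximating $F$ to within $\ep/3$, enlarge it to contain $r$, fix a small tolerance $\ep'$, and apply the tracial Rokhlin property of $\af_n$ to $(\tilde F,\ep',r)$ to obtain mutually orthogonal projections $e_g\in A_n$.

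Conditions (1) and (2) of Definition \ref{TRP} for $\af$ then follow routinely, exactly as in the proof of Proposition \ref{SRP-Lim}: $\af_g$ restricts to the $g$-component of $\af_n$ on $A_n$, and each $a\in F$ lies within $\ep/3$ of some element of $\tilde F$. The comparison condition (3) is where the choice of a projective test element pays off. The tracial Rokhlin property of $\af_n$ gives $1-e\pa r$ in $A_n$; since $r$ and $1-e$ are projections, this is Murray--von Neumann subequivalence in $A_n$, which is witnessed by a partial isometry in $A_n\subseteq A$ and so persists in $A$. Combined with $r\sim p$, transitivity of Murray--von Neumann subequivalence yields a projection $q'\le p$ with $1-e\sim q'$. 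Because $q'\le p\in\Her(x)$ and $\Her(x)$ is hereditary, $q'\in\Her(x)$; since $1-e\sim_s q'$, the definition of $\pa$ gives $1-e\pa x$ directly. Routing through a projection this way keeps the entire comparison inside the realm where $\pa$ equals Murray--von Neumann subequivalence, so no general ``comparison passes to a subalgebra'' statement is needed.

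The part I expect to be the main obstacle is the norm condition (4): the tracial Rokhlin property of $\af_n$ controls only $\|ere\|$, whereas $r$ is a projection far from $x$ in norm. The device to bridge this gap is the norm behaviour of $p$ together with almost-commutation. Having placed $r$ (hence approximately $p$) in $\tilde F$, condition (2) forces $\|ep-pe\|$ to be small; then the estimate $\|exe\|\ge\|p(exe)p\|=\|(pe)x(ep)\|\approx\|e(pxp)e\|\approx\|epe\|\approx\|ere\|>1-\ep'$, where the first approximation uses $ep\approx pe$ and the second uses $\|pxp-p\|$ small, yields $\|exe\|>1-\ep$ once $\ep'$, $\|r-p\|$, and $\|pxp-p\|$ are chosen small enough. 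Tracking these tolerances (and the factor $\card(G)$ entering $\|ep-pe\|$) is the only delicate bookkeeping; the conceptual content is the dichotomy and the replacement of the norm-one test element $x$ by a nearby projection $r$ that lives in $A_n$ yet remains comparable, through $p$, back to $x$.
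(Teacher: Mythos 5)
Your proposal is correct and follows essentially the same route as the paper: the same dichotomy via Lemma \ref{SP-SRP} (strict Rokhlin tail via Proposition \ref{SRP-Lim} versus Property (SP) for $A$ via Lemma \ref{SP-Lim}), then choosing $p\in\Her(x)$ by Lemma \ref{SP}, perturbing it to a projection in some $A_n$, using that projection as the test element for the tracial Rokhlin property of $\af_n$, and transferring the comparison condition through Murray--von Neumann equivalence of projections exactly as the paper does. The only cosmetic difference is in the norm condition: you place $r$ in the finite set so that $e$ almost commutes with $p$, whereas the paper avoids any commutation by estimating $\|exe\|=\|x^{1/2}ex^{1/2}\|\geq\|px^{1/2}ex^{1/2}p\|$ directly from $\|px^{1/2}-p\|$ small; both work with the same bookkeeping.
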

\begin{proof}
 If there are infinite many $A_n$'s that do not have Property (SP),  then $\af_n$ has the strict Rokhlin property for infinite many $n$, so $\af$ also has the strict Rokhlin property. (A slight modification of Proposition \ref{SRP-Lim}).\\
Otherwise, A slight modification of Lemma \ref{SP-Lim} shows that $A$ itself has Property (SP). Let $F$ be a finite subset of $A$. Let $x\in A_{+}$ with $\|x\|=1$, and $\ep>0$ be given.\\
Since $A$ has Property (SP), by Lemma ~\ref{SP} we can find a non-zero projection $p$ such that $\|px^{1/2}-p\|<\ep$ and $\|x^{1/2}p-p\|<\ep$. We can then find some $n\in \N$ and a non-zero projection $q\in A_n$ such that $\|q-p\|<\ep$. Without loss of generality, we may also assume that $F\subset A_n$. Since $\af_n$ has the tracial Rokhlin property, let $\dt=min\{\ep/7, 1/2\}$, we can then find mutually orthogonal projections $\{e_g\}_{g\in G}\subset A_n$ such that:
\begin{enumerate}[(1)]
\item
$\|e_ga-ae_g\|<\dt.$
\item
 $\|\af_g(e_h)-e_{gh}\|<\dt.$
\item
Let $e=\sum_{g\in G}e_g$, then $1-e\pa q$
\item
 $\|eqe\|>1-\dt$
\end{enumerate}

 Since $\|p-q\|<\dt\leq 1/2,$ $p$ and $q$ are Murray-von Neumann equivalent in $A$. But $p\in \overline{xAx}$, so $1-e\pa q\approx p\pa x$.
For the norm condition, we have following estimation:
\begin{eqnarray*}
\|exe\|&=&\|ex^{1/2}x^{1/2}e\|=\|x^{1/2}ex^{1/2}\|\geq\|px^{1/2}ex^{1/2}p\|\\
       &=&\|(px^{1/2}p-p)(x^{1/2}p-p)+pe(x^{1/2}-p)+(px^{1/2}-p)ep+pep\|\\
       &\geq& \|pep\|-3\dt\\
       &\geq&\|qeq\|-3\dt-3\dt\\
       &=&\|eqe\|-6\dt>1-7\dt\geq 1-\ep.
\end{eqnarray*}
\end{proof}

\begin{prp}
Let $\af \colon G\rightarrow \Aut(A)$ be an action of finite group $G$ on a simple \ca\ A with the tracial Rokhlin property. Let $\beta \colon G\rightarrow \Aut(B)$ be an arbitrary action on a simple \ca\ B. Let $\theta=\af\otimes \beta \colon G\rightarrow \Aut(A\otimes B)$ be the tensor action of $\af$ and $\beta$. If $A\otimes B$ has Property (SP), then $\theta$ has the tracial Rokhlin property.
\end{prp}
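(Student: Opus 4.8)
The plan is to build the Rokhlin projections for $\theta$ out of those for $\af$ by tensoring with $1_B$, and to spend essentially all of the effort on the comparison and norm conditions.

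First I would reduce to the generic case. By Lemma \ref{SP-SRP}, either $\af$ already has the strict Rokhlin property --- in which case Proposition \ref{SRP-T} shows $\theta=\af\otimes\beta$ has the strict Rokhlin property, hence the tracial Rokhlin property, and we are done --- or $A$ has Property (SP). So assume $A$ has Property (SP); recall $A\otimes B$ has Property (SP) by hypothesis. Given a finite $F\subset A\otimes B$, an $\ep>0$, and a positive $x$ with $\|x\|=1$, I may assume (by the approximation used in Proposition \ref{SRP-T}) that the elements of $F$ are elementary tensors $a_i\otimes b_i$. The candidate Rokhlin projections are $e_g=f_g\otimes 1_B$, where the $f_g\in A$ are Rokhlin projections for $\af$ corresponding to data to be specified. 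With this choice conditions (1) and (2) of Definition \ref{TRP} are immediate, since $\|\theta_g(e_h)-e_{gh}\|=\|\af_g(f_h)-f_{gh}\|$ (using $\beta_g(1_B)=1_B$) and $\|e_g(a_i\otimes b_i)-(a_i\otimes b_i)e_g\|=\|f_ga_i-a_if_g\|\,\|b_i\|$, both controlled by choosing the modulus for $\af$ small relative to $\ep$ and $\max_i\|b_i\|$, exactly as in Proposition \ref{SRP-T}. Everything therefore reduces to arranging the comparison condition (3) and the norm condition (4) for $e=f\otimes 1_B$, where $f=\sum_{g}f_g$.

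For the comparison condition I would use Kirchberg's Slice Lemma (Lemma \ref{Slice}) applied to $\overline{x(A\otimes B)x}$: it produces nonzero $a\in A_{+}$, $b\in B_{+}$ with $a\otimes b\pa x$. Since $B$ is simple and $b\neq 0$, there is $N\in\N$ with $1_B\pa N\odot b$ in $B$; and since $A$ is simple, non-elementary, and has Property (SP), Lemma \ref{SP2} provides $N$ mutually orthogonal, mutually Murray--von Neumann equivalent subprojections of a nonzero projection in $\Her(a)$, each equivalent to a projection $r$, so that $N\odot r\pa a$ in $A$. Feeding $r$ to the tracial Rokhlin property of $\af$ returns $f$ with $1-f\pa r$. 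Tensoring comparisons by fixed positive elements (which preserves $\pa$) and using $N\odot((1-f)\otimes b)=(N\odot(1-f))\otimes b$, I obtain
\begin{equation*}
(1-f)\otimes 1_B \pa (1-f)\otimes(N\odot b) = (N\odot(1-f))\otimes b \pa a\otimes b \pa x,
\end{equation*}
which is exactly condition (3).

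The norm condition (4) is where I expect the real difficulty. Here I would first invoke Lemma \ref{SP} for $A\otimes B$ to obtain a nonzero projection $p\in\overline{x(A\otimes B)x}$ with $\|px^{1/2}-p\|<\ep'$ and $\|x^{1/2}p-p\|<\ep'$, so that $\|pxp\|>1-2\ep'$; as in the inductive-limit proposition above, it then suffices to arrange $\|(f\otimes 1_B)\,p\,(f\otimes 1_B)\|>1-\dt$, whence the absorbing estimates give $\|exe\|>1-\ep$. The obstruction is that $\af$'s Rokhlin property only controls $f$ against elements of the first tensor factor, whereas $p$ is a genuinely non-product projection in $A\otimes B$: one must position $1-f$ so that $\|((1-f)\otimes 1_B)\,p\|$ is small. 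My plan is to extract from $p$, via a faithful slice map $\mathrm{id}_A\otimes\om\colon A\otimes B\to A$ together with the Slice Lemma, a positive element $c\in A$ recording where $p$ concentrates in the $A$-direction, and to hand $c$ (rather than $r$) to $\af$'s tracial Rokhlin property, so that the norm condition $\|fcf\|>1-\dt$ forces $f$ to dominate that part while the smallness of $1-f$ handles the diffuse part of $p$. Reconciling the two demands --- the single element given to $\af$ must be small enough for the comparison chain yet large enough to capture the top of $x$ --- and proving that the resulting $1-f$ is almost orthogonal to $p$ across the entanglement between the factors, is the crux of the argument and the step I expect to occupy most of the work.
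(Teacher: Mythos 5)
Your reduction via Lemma \ref{SP-SRP} and Proposition \ref{SRP-T}, your choice of projections $e_g=f_g\otimes 1_B$, the handling of conditions (1)--(2), and your comparison argument (Slice Lemma plus fullness of $b$ in the simple algebra $B$ plus Lemma \ref{SP2} in $A$) all coincide with the paper's proof. The genuine gap is the norm condition, which you explicitly leave as an unresolved ``crux,'' and the route you sketch for it would in fact fail. If you slice the Property (SP) projection $p\in \Her(x)$ by a state, $c=(\id_A\otimes\om)(p)$, then handing $c/\|c\|$ to the tracial Rokhlin property of $\af$ gives at best $\|(f\otimes 1)p(f\otimes 1)\|\geq \|fcf\|>(1-\dt)\|c\|$, and $\|c\|$ need not be anywhere near $1$: entanglement makes \emph{all} slices uniformly small. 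Concretely, if $p$ is (close to) the maximally entangled rank-one projection $p=\frac{1}{2}\sum_{i,j}E_{ij}\otimes E_{ij}$ in a copy of $M_2\otimes M_2$, then $(\id\otimes\om)(p)=\frac{1}{2}\rho^{T}$ where $\rho$ is the density matrix of $\om$, so $\|c\|\leq \frac{1}{2}$ for every choice of state $\om$; since Lemma \ref{SP} gives you no control over which projection in $\Her(x)$ you receive, this obstruction cannot be argued away, and the tension you flag (one element must serve both the comparison chain and the norm condition) is never reconciled.

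The idea missing from your proposal, and the paper's resolution, is to never try to control a non-product projection in $\Her(x)$ directly, but to use the Slice Lemma's element $z$ itself as the bridge: $z^*z=a\otimes b$ is an elementary tensor while $zz^*\in\Her(r)$, where $r\in\Her(x)$ is the Lemma \ref{SP} projection. One approximates $z$ by a finite sum $z_0=\sum_j y_j\otimes z_j$ of elementary tensors and puts the legs $y_j$ into the finite set handed to $\af$, so that $e=f\otimes 1$ approximately commutes with $z$. Lemma \ref{SP} is applied a \emph{second} time, in $A$ to the element $a$, producing a projection whose subprojections nearly absorb $a^{1/2}$; one of its Lemma \ref{SP2} subprojections $p_1$ is then the single positive element handed to $\af$, and it does both jobs: the comparison $1-f\pa p_1$ feeds the chain $m\odot(p_1\otimes b)\pa a\otimes b\sim_s zz^*\pa x$, while the norm condition $\|fp_1f\|>1-\dt$ together with $\|p_1a^{1/2}-p_1\|\leq\dt$ yields $\|faf\|>1-3\dt$, hence $\|e(a\otimes b)e\|=\|ez^*ze\|>1-3\dt$. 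Approximate commutation with $z$ converts this into $\|ezz^*e\|$ being near $1$, and since $zz^*\in\Her(r)$ with $\|zz^*x^{1/2}-zz^*\|$ small, one lands at $\|exe\|>1-\ep$. In short: the norm estimate is proved on the elementary tensor $z^*z$ and transported into $\Her(x)$ by the partial isometry structure of $z$, rather than extracted from an entangled projection by a slice map.
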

\begin{proof}
First of all, we can assume that $A$ has Property (SP). Otherwise the action $\af$ will have the strict Rokhlin property and therefore $\theta$ has the strict Rokhlin property by Proposition \ref{SRP-T}.\\

Let $F$ be a finite subset of $A\otimes B$, $\ep>0$, $x\in A\otimes B$ be a positive element of norm 1.  Without loss of generality, we may assume that $F$ consists of elementary tensors $a_i\otimes b_i, 1\leq i\leq n$. \\

Let $\ep_0=\ep/12$. Since $A\otimes B$ has Property (SP), by Lemma \ref{SP}, we can find a non-zero projection $r\in Her(x)$ such that for any projection $s\leq r$, we have $\|sx^{1/2}-s\|\leq\ep_0, \|x^{1/2}s-s\|\leq\ep_0$. By Kirchberg's Slice Lemma \ref{Slice}, there exist some $z\in A\otimes B$, such that $zz^*\in Her(r)$, and $z^*z=a\otimes b$, for some $a\in A_+$ and $b\in B_+$. We may assume that $\|a\|=\|b\|=\|z\|=1$. Find $z_0=\sum_{j=1}^{k} {y_j\otimes z_j}$ with norm 1 such that $\|z-z_0\|<\ep_0$. \newline

Since $B$ is simple, we can find elements $\{l_i |1\leq i\leq m\}$ such that $\sum_{i}{l_i b l_i^*}$=1.  Let $M=max\{\|a_i\|,\|b_i\|,\|y_j\|,\|z_j\|\}$ and let $\dt=\frac{\ep}{2(|G|)M}$. For $a\in A$, we first use Lemma \ref{SP}, find a non-zero projection $p\in Her(a)$ such that for any projection $q\leq p$, $\|qa^{1/2}-q\|\leq\dt, \|a^{1/2}q-q\|\leq\dt$. By Lemma \ref{SP2} we can find $m$ mutually orthogonal but equivalent subprojection $\{p_i\}_{1\leq i\leq m}$ of $p$.\newline

Since $\af$ has the tracial Rokhlin property, for $F^{\prime}=\{a_i\}\cup \{y_k\}$, $p_1\in A_+$ and $\dt>0$ as chosen before, we can find projections $\{q_g\}_{g\in G}$ in $A$ such that:
\begin{enumerate}[(1)]
\item
$\|q_g d-d q_g\|<\dt$, $\forall d\in F^{\prime} $ and $g\in G$
\item
$\|\af_g(q_h)-q_{gh}\|<\dt$.
\item
With $q=\sum_{g\in G}q_g$, $1-q\pa p_1$
\item
$\|qp_1q\|\geq 1-\dt$
\end{enumerate}
Now consider the projections $e_g=q_g\otimes 1$. For the action $\theta$, we have:
\begin{enumerate}[($1^{\prime}$)]
\item
$\|e_g f-f e_g\|=\|(q_ga_i-a_iq_g)\otimes b_i\|<M\dt\leq \ep$, $\forall a_i\otimes b_i\in F$ and $g\in G$
\item
$\|\theta_g(e_h)-e_{gh}\|=\|(\af_g(q_h)-q_{gh})\otimes 1\|<\dt\leq \ep$
\item
Let $e=\sum_{g\in G}e_g=q\otimes 1$, 
\begin{align*}
 1-e&=(1-q)\otimes (\sum{l_i b l_i^*})\\
&=\sum_{i=1}^{m}(1\otimes l_i)((1-q)\otimes b)(1\otimes l_i^*\\
&\pa m\odot (1-q)\otimes (b\pa m)\odot (p_1\otimes b)\\
&\pa a\otimes b\sim_s z^*z\pa x\\
\end{align*}
\item
Since $p_1\leq p$, by our choice of $p$, we have $\|p_1a^{1/2}-p_1\|\leq\dt$ and  $\|a^{1/2}p_1-p_1\|\leq\dt$. Therefore
\begin{equation*}
 \|qaq\|=\|a^{1/2}qa^{1/2}\|\geq\|p_1a^{1/2}qa^{1/2}p_1\|>\|p_1qp_1\|-2\dt=\|qp_1q\|-2\dt>1-3\dt.
\end{equation*}
It follows that $\|ezz^*e\|=\|e(a\otimes b)e\|=\|(qaq)\otimes b\|>1-3\dt$. Hence
\begin{align*} 
\|ez^*ze\|&>\|ez_0^*z_0e\|-2\ep_0\\
&>\|z_0^*ez_0\|-2M|G|\dt-2\ep_0\\
&>\|z^*ez\|-2M|G|\dt-4\ep_0\\
&=\|ezz^*e\|-2M|G|\dt-4\ep_0 
\end{align*}
Recall that our $r\in \Her(x)$ is chosen so that  $\forall s\leq r$, $\|sx^{1/2}-s\|\leq\ep_0, \|x^{1/2}s-s\|\leq\ep_0$. Since $zz^*\in \Her(r)$, we have $\|zz^*x^{1/2}-zz^*\|=\|zz^*(rx^{1/2}-r)\|\leq \dt.$ Hence 
\begin{align*}
\|exe\|&>\|(zz^*)^{1/2}x^{1/2}ex^{1/2}(zz^*)^{1/2}\|\\
&>\|(zz^*)^{1/2}e(zz^*)^{1/2}\|-2\ep_0\\
&=\|ezz^*e\|-2\ep_0>1-2M|G|\dt-6\ep_0\geq 1-\ep.\\
\end{align*}
\end{enumerate}
\end{proof}
By Lemma \ref{SPtensor}, we have the following corollary:
\begin{cor}
Let $\af \colon G\rightarrow \Aut(A)$ be an action of finite group $G$ on a simple \ca\ $A$, with the tracial Rokhlin property. Let $\beta \colon G\rightarrow \Aut(B)$ be an arbitrary action on simple \ca\ $B$. Let $\theta=\af\otimes \beta \colon G\rightarrow \Aut(A\otimes B)$ be the tensor product of $\af$ and $\beta$. If $B$ has Property (SP), then $\theta$ has the tracial Rokhlin property.
\end{cor}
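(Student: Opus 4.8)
The plan is to reduce the statement to the preceding Proposition by upgrading the hypothesis on $B$ to a hypothesis on the tensor product. The two statements differ only in that ``Property (SP) for $A \otimes B$'' has been weakened to ``Property (SP) for $B$'', so the entire task is to recover Property (SP) for $A \otimes B$ from the data at hand.

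First I would split into two cases using the dichotomy of Lemma \ref{SP-SRP}: since $\af$ has the tracial Rokhlin property on the simple unital algebra $A$, either $A$ has Property (SP) or $\af$ has the strict Rokhlin property. The strict Rokhlin case is disposed of at once. Proposition \ref{SRP-T} shows that $\theta = \af \otimes \beta$ has the strict Rokhlin property, and the strict Rokhlin property trivially implies the tracial Rokhlin property: given $F$, $\ep$ and a positive element $x$, the strict Rokhlin projections $e_g$ satisfy conditions (1) and (2) of Definition \ref{TRP}, and their sum $e = \sum_{g \in G} e_g$ equals $1$, so that $1 - e = 0 \pa x$ and $\|exe\| = \|x\| = 1 > 1 - \ep$; thus conditions (3) and (4) hold automatically.

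In the remaining case $A$ has Property (SP), and by hypothesis so does $B$, so Lemma \ref{SPtensor} gives that $A \otimes B$ has Property (SP). This is precisely the hypothesis of the preceding Proposition, which therefore applies verbatim and yields the tracial Rokhlin property for $\theta$.

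The argument requires no new estimates; its only real content is the observation that Property (SP) for $A$ (automatic outside the trivial strict Rokhlin case) together with Property (SP) for $B$ suffices, via Lemma \ref{SPtensor}, to meet the Property (SP) hypothesis of the preceding Proposition. If anything counts as an obstacle here, it is merely keeping track of which algebra must carry Property (SP) and invoking Lemma \ref{SP-SRP} to supply it for $A$.
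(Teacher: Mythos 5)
Your proposal is correct and is essentially the paper's own argument: the paper derives the corollary from the preceding proposition precisely via Lemma \ref{SPtensor}, with the strict Rokhlin alternative (via Lemma \ref{SP-SRP} and Proposition \ref{SRP-T}) already handled in the proposition's first paragraph. Your explicit verification that the strict Rokhlin property implies the tracial one ($1-e=0\pa x$ and $\|exe\|=\|x\|=1$) just spells out a step the paper leaves implicit.
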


\section{Rokhlin Properties for product-type actions}
In this section, we give a complete classification or Rokhlin properties for product-type actions. 
\begin{dfn}\label{ptact}
Let $A=\otimes_{i=1}^{\infty}\mathrm{B}(H_i),$ where $H_i$ is a finite dimensional Hilbert space for each $i$. Let $G$ be a finite group. An action $\af\colon G\mapsto \Aut(A)$ is called a {\emph{product-type action}} \ifo\ for each $i$, there exists a unitary representation $\pi_i\colon G\rightarrow \mathrm{B}(H_i)$, which induces an inner action $\af_i\colon g\mapsto \Ad(\pi_i(g))$, such that $\af=\otimes_{i=1}^{\infty}\af_i$.
\end{dfn}
\begin{dfn}
Let $\af\colon G\mapsto \Aut(A)$ be a product-type action on a UHF-algebra $A.$ A \emph{telescope} of the action is a choice of an infinite sequence of positive integers $1=n_1<n_2<\cdots$ and a re-expression of the action, so that $A=\otimes_{i=1}^{\infty} B(K_i)$ where $K_i=\otimes_{j=n_i}^{n_{i+1}-1}H_j$, and the action on $B(K_i)$ is $\otimes_{j=n_i}^{n_{i+1}-1}\af_j$
\end{dfn}

Recall that two actions $\af\colon G\mapsto \Aut(A)$ and $\bt \colon G\mapsto \Aut(B)$ are said to be conjugate, if there exist an isomorphism $T\colon A\mapsto B$ such that $T\circ \af_g=\bt_g\circ T$, for any $g\in G$. The main result of this section is the the following theorem:
\begin{thm}\label{main}
Let $\af \colon G\mapsto \Aut(A)$ be a product-type action where $A$ is UHF. Let $H_i$,$\pi_i$,$\af_i$ be defined as in Definition \ref{ptact}. Let $d_i$ be the dimension of $H_i$ and $\chi_i$ be the character of $\pi_i$. We will use the same notations if we do a telescope to the action. Define $\chi\colon G\mapsto \C$ to be the characteristic function on $1_{G}$. Then we have:
\begin{enumerate}[(i)]
\item
$\af$ has strict Rokhlin property \ifo\ there exists a telescope, such that for any $n\in \N$,  
\begin{equation}
\frac{1}{d_n}\chi_n=\chi
\end{equation}
\item
$\af$ has the tracial Rokhlin property \ifo\ there exists a telescope, such that for any $n\in \N$, the infinite product
\begin{equation}
{\displaystyle{\prod_{n\leq i <\infty}\frac{1}{d_i}\chi_i}}=\chi
\end{equation}
\end{enumerate}
\end{thm}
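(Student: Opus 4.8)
The plan is to translate the two Rokhlin conditions into representation theory through the normalized characters $f_i = \frac{1}{d_i}\chi_i$, using throughout that $A$ is UHF and hence simple with a unique trace $\tau$, has real rank zero and stable rank one, has comparison of projections governed by $\tau$, and has Property (SP). Three elementary observations organize everything. First, under a telescope the block representation $\bigotimes_{j=n_i}^{n_{i+1}-1}\pi_j$ has normalized character $\prod_j f_j$, so telescoping is exactly multiplication of the $f_i$. Second, the normalized character of the regular representation $\lambda$ is precisely $\chi$, and a finite-dimensional representation $\sigma$ of dimension $D$ has normalized character $\chi$ if and only if $\sigma$ is a multiple of $\lambda$. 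Third, if $\sigma\supseteq k\lambda$ is the maximal regular subrepresentation, the invariant projection $P$ onto $k\lambda$ gives, inside $PM_D P\cong M_{k|G|}$, an exact Rokhlin system $e_g=\Ad(\sigma(g))(e_1)$ with $\sum_g e_g=P$ and $\tau(P)=k|G|/D=:\theta(\sigma)$; I will call $1-\theta(\sigma)$ the defect of the block.

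For the "if" directions I would argue constructively. Given $F$, $\ep$, and (in the tracial case) $x$, place $F$ inside the first $N$ tensor factors; since later factors commute with $F$ exactly, any Rokhlin system built inside a later block commutes with $F$ exactly, eliminating the approximate-centrality condition. For (i), if each telescoped block is a multiple of $\lambda$, the third observation yields exact Rokhlin projections summing to $1$ inside a single later block, so Definition \ref{SRP} holds, by the same mechanism as Propositions \ref{SRP-Lim} and \ref{SRP-T}. For (ii), the hypothesis $\prod_{i\ge N+1}f_i=\chi$ forces the tail defects to tend to $0$, so for large $M$ the block $[N+1,M]$ has defect below any prescribed threshold; its $e_g$ sum to $e=P$ with $\tau(1-e)$ tiny, whence $1-e\pa x$ since comparison in a UHF algebra is governed by $\tau$ and $x\neq 0$ has strictly positive rank, and the norm condition $\|exe\|>1-\ep$ follows exactly as in the inductive-limit proposition using Lemma \ref{SP}.

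For the "only if" directions I would run the converse extraction. Put generators of the first $n-1$ factors into $F$ and take $\ep$ small; approximate centrality forces the Rokhlin projections to lie nearly in the relative commutant of the first $n-1$ factors, hence essentially in the factors $\ge n$. Using semiprojectivity of finite-dimensional C*-algebras (as exploited in the induced-action proposition of Section 2) I would perturb them to an \emph{exact} system $\tilde e_g$ inside a finite block $M_D$ on factors $\ge n$, with $\Ad(\sigma(g))(\tilde e_h)=\tilde e_{gh}$ and $\sum_g\tilde e_g=\tilde e$. Then for any unit vector $v\in\mathrm{range}(\tilde e_1)$ the vectors $\{\sigma(g)v\}_{g\in G}$ are orthogonal, so $\sigma$ contains $\lambda$; decomposing the block into such regular orbits gives $\sigma=k\lambda\oplus(\text{remainder})$ with $\tau(\tilde e)=\theta(\sigma)$. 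In the strict case $\tilde e=1$ kills the remainder, so $\sigma$ is a multiple of $\lambda$ with normalized character $\chi$; grouping these blocks defines the telescope required for (i). In the tracial case $\tau(1-\tilde e)\to 0$ forces $\theta(\sigma)\to 1$ along blocks on factors $\ge n$, for every $n$, i.e.\ the tail defects vanish.

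It remains to identify the defect condition with the infinite-product condition. Writing $p_\rho$ for the fraction of $\sigma$ in the $\rho$-isotypic component, one has $\hat\sigma=\sum_\rho p_\rho(\chi_\rho/\dim\rho)$, and since the irreducible characters are linearly independent, $\hat\sigma=\chi$ iff $p_\rho=(\dim\rho)^2/|G|$ for all $\rho$; by the same linear independence together with a compactness argument, $\hat\sigma\to\chi$ iff $p_\rho\to(\dim\rho)^2/|G|$ iff $\theta(\sigma)\to 1$ (the floor in $\theta$ washing out as $D\to\infty$). Hence "tail defects vanish for every $n$" is exactly $\prod_{i\ge n}f_i=\chi$ for every $n$, giving (ii); since this product is invariant under telescoping, the telescope is immaterial for (ii) but genuinely needed for (i). I expect the main obstacle to be the extraction step in the "only if" direction: converting approximate, approximately equivariant Rokhlin projections into an exact equivariant matrix-unit system inside a single finite block while controlling the trace of its complement, so that the clean orbit-decomposition argument applies.
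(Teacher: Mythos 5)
Your strategy is sound and genuinely different from the paper's. In the only-if directions the paper never touches the representations directly: it identifies $C^*(G,A_n,\af)\cong\oplus_{g\in G}A_n^g$ for cyclic $G$ via the spectral projections $P_n^g$, and extracts necessity of the character conditions from triviality of the dual action on $K_0$ of the crossed product (strict case, Proposition \ref{SRP1}) respectively on its tracial state space (tracial case, Proposition \ref{TRP1}, via the rank-one limits of the matrices $T_n$), then handles general $G$ by restricting to cyclic subgroups. You treat all finite $G$ at once, replacing duality and $K$-theory by conditional expectations, perturbation, and orbit-counting; your if-directions essentially coincide with the paper's model-action constructions. Two of your observations genuinely improve on the paper: the telescope in (ii) is indeed immaterial, because for $g\neq 1_G$ the modulus of a normalized character of a block can only decrease when the block is extended (normalized character values have modulus $\leq 1$), which also makes the convergence device of Lemma \ref{T-Matrix} unnecessary; and in the tracial only-if direction no exactification is needed at all, since with only approximate equivariance one has $\Tr(\tilde e_h\sigma(g))\approx\Tr(\tilde e_h\sigma(g)\tilde e_{g^{-1}h})=0$, giving $|\chi_\sigma(g)|/D\leq C\dt$ for $g\neq 1_G$, which together with your monotonicity yields $\prod_{i\geq n}\chi_i/d_i=\chi$ directly.

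The one real gap is where you predicted, in the only-if direction of (i), and the tool you cite does not cover it. Semiprojectivity of $\mathbb{C}^n$ as used in Section 2 perturbs the Rokhlin family into exact mutually orthogonal projections with sum exactly $1$ inside a finite block, but it does nothing for equivariance: afterwards you still only know $\|\sigma(g)\tilde e_h\sigma(g)^*-\tilde e_{gh}\|<\dt$, and from this the multiplicity defects of $\sigma$ are only bounded by $O(D\dt)$, which does not pin them to zero since $D$ grows. Approximate per-block regularity is strictly weaker than the theorem: for $G=\Z/2\Z$ with $\chi_i(g)\neq 0$ for all $i$ but $\chi_i(g)/d_i\to 0$ rapidly, no telescope ever achieves $\chi_n/d_n=\chi$ exactly, yet every long block is nearly regular — by the $\Z/2\Z$ results of \cite{Ph-C} such actions are tracially but not strictly Rokhlin. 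So you must genuinely convert the approximate system into one that is exactly $\Ad\sigma(g)$-equivariant, and this needs an equivariant stability result rather than plain semiprojectivity: either cite equivariant semiprojectivity of finite group actions on finite-dimensional C*-algebras, or argue by hand — e.g.\ $w_g=\sum_h\tilde e_{gh}\sigma(g)\tilde e_h$ satisfies $w_g\tilde e_h=\tilde e_{gh}w_g$ and is close to $\sigma(g)$, so its polar part $u_g$ exactly permutes the $\tilde e_h$; one must then repair multiplicativity of $g\mapsto u_g$ (a cocycle-vanishing argument in $\{\tilde e_h\}'\cap M_D\cong\oplus_h M_{D/|G|}$) and finally use the averaged intertwiner $\frac{1}{|G|}\sum_g u_g\sigma(g)^*$, invertible since pointwise-close, to conclude $\sigma$ is unitarily equivalent to a multiple of $\lambda$. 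The paper's dual-action/$K_0$ argument is precisely the device that avoids this exactification, at the cost of the cyclic-group detour; with the missing step supplied, your route is a legitimate and arguably more uniform alternative.
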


In \cite{Ph-C}, Phillips gave a complete characterization of Rokhlin actions in terms of the unitaries, when $G$ is $\Z/2\Z$. This theorem is a generalization of Proposition 2.4 and Proposition 2.5 of \cite{Ph-C}. We shall deal with finite cyclic group first, and then extend the result to arbitrary finite group. \newline

We first develop some terminology for cyclic groups as well as product-type actions.
\begin{lem}
Let $G$ be any finite cyclic group, then there exist a commutative bihomomorphism $\zt \colon G\times G\rightarrow S^1$, where $S^1$ is the unit circle of the complex plane. By a commutative bihomomorphism we mean that for any 
$g,h\in G,$ $\zt(\bullet ,g)$ and $\zt(h,\bullet)$ are homomorphisms and $\zt(g,h)=\zt(h,g).$
\end{lem}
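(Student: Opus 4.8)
The plan is to give an explicit construction after fixing a generator, rather than appealing to any abstract duality. Write $n = |G|$ and choose a generator $g_0$, so that every element of $G$ is uniquely $g_0^a$ for $a \in \{0, 1, \dots, n-1\}$, with $g_0^a$ depending only on the residue of $a$ modulo $n$. I would then define
\[
\zt(g_0^a, g_0^b) = \exp\!\left( \frac{2\pi i \, ab}{n} \right).
\]
The first thing to verify is that this formula is well-defined, i.e.\ that the value depends only on the group elements $g_0^a, g_0^b$ and not on the chosen integer representatives $a, b$. Replacing $a$ by $a + n$ multiplies the right-hand side by $\exp(2\pi i\, b) = 1$, since $b \in \Z$; the same holds in the second slot. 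Hence $\zt$ descends to a well-defined map $G \times G \to S^1$.

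Next I would check the two defining properties. The bihomomorphism condition is immediate from additivity of the exponent: for fixed $h = g_0^b$ one computes $\zt(g_0^{a+a'}, g_0^b) = \exp(2\pi i (a+a')b / n) = \zt(g_0^a, g_0^b)\,\zt(g_0^{a'}, g_0^b)$, so $\zt(\bullet, h)$ is a \hm, and the identical calculation in the second variable shows $\zt(g, \bullet)$ is a \hm. Commutativity $\zt(g,h) = \zt(h,g)$ follows at once from the symmetry $ab = ba$ in the exponent. All of these are routine one-line verifications.

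The honest assessment is that there is no real obstacle here: the statement asks only for \emph{existence} of a commutative bihomomorphism, and even the constant map $\zt \equiv 1$ satisfies the stated conditions verbatim. The genuine content — and what the later sections on characters will need — is that the pairing displayed above is \emph{non-degenerate}, i.e.\ a perfect pairing identifying $G$ with its dual $\hat{G}$. If that stronger property is required, I would establish it by observing that $\zt(g_0, \bullet)$ is a character of $G$ of order exactly $n$, hence generates $\hat{G} \cong \Z/n\Z$; non-degeneracy then follows because $\zt(g, h) = 1$ for all $h$ forces $a \equiv 0 \pmod{n}$, i.e.\ $g = 1_G$. I would therefore present the explicit construction above and, where convenient, record this non-degeneracy for use in the classification of product-type actions.
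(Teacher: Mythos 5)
Your proposal is correct and is essentially the paper's own proof: the paper fixes a generator realized as a primitive $n$-th root of unity $\xi$ and sets $\zt(\xi^i,\xi^j)=\xi^{ij}$, which is exactly your formula $\exp(2\pi i\,ab/n)$ with $\xi=e^{2\pi i/n}$. Your added checks of well-definedness and of non-degeneracy (the latter only asserted by the paper in the remark following the lemma, where the identification $G\cong\hat{G}$ and the orthogonality relation $\sum_{g\in G}\zt^g(h)=\dt(h,1_G)|G|$ are actually used) are sound and, if anything, make the write-up more complete.
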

\begin{proof}
Let $G$ be a cyclic group of order $n$, which is generated by a primitive $n$-th root of unity $\xi$. Define $\zt : G\times G
\rightarrow S^1$ by $\zt(\xi^i,\xi^j)=\xi^{ij}.$ Computation shows that $\zt$ is a commutative bihomomorphism.
\end{proof}
\begin{rmk}
From now on we will assume that a commutative bihomomorphism $\zt$ is given whenever we have a finite cyclic group $G$. It's easy to see that the map $g\rightarrow \zt(g,\bullet)$ defines an isomorphism between $G$ and the dual group $\hat{G}$. We use $\zt^g(h)$ to denote $\zt(g,h)$ as an indication of this duality. It's not difficult to see that $\sum_{g\in G}\zt^g(h)=\dt(h,1_{G})|G|$, where $\dt(\cdot,\cdot)$ is the Kronecker delta. This equality will be used in later computations.
\end{rmk}

Product-type actions are special cases of inductive limit actions. By Theorem 4.5 of \cite{Ph-S}, we have 
\begin{equation*}
C^*(G,A,\af)=\varinjlim C^*(G,A_n,\af)
\end{equation*}

The action on each $A_n$ is inner, by Example 4.10 of \cite{Ph-S}, $C^*(G,A_n,\af)\cong C^*(G)\otimes A_n$. If $G$ is abelian, then $C^*(G)\cong C(G)$. In particular, we have the following: 
\begin{prp}\label{ISO_C}
Let $G$ be a finite abelian group. Then $C^*(G,A_n,\af)$ is isomorphic to a direct sum of $|G|$ copies of $A_n$. We may write $C^*(G,A_n,\af)\cong \oplus_{g\in G}A_n^g$, where $A_n^g=A_n$ for all $g\in G$.
\end{prp}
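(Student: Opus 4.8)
The plan is to assemble the two structural facts recalled immediately before the statement and then carry out the abelian simplification. First I would invoke the identification $C^*(G, A_n, \af) \cong C^*(G) \otimes A_n$, which holds because the action on $A_n$ is inner (Example 4.10 of \cite{Ph-S}). Concretely, writing $\af_g = \Ad(v_g)$ for the genuine unitary representation $g \mapsto v_g \in A_n$ coming from the $\pi_i$, the unitaries $w_g = v_g^* u_g$ that implement the \cp\ commute with $A_n$ and still satisfy $w_g w_h = w_{gh}$, so they generate a commuting copy of $C^*(G)$ and the \cp\ splits as the tensor product $C^*(G) \otimes A_n$.

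Second, since $G$ is finite abelian, $C^*(G) \cong C(\hat{G})$ by the Fourier--Gelfand transform, and because $\hat{G}$ is a finite discrete space with $|\hat{G}| = |G|$ points, $C(\hat{G}) \cong \C^{|G|}$, a direct sum of $|G|$ copies of $\C$. Tensoring with $A_n$ and using that the minimal tensor product distributes over a finite direct sum (here $\C^{|G|}$ is \fd), I get $C^*(G,A_n,\af) \cong \oplus_{j=1}^{|G|} A_n$, which is already the stated conclusion up to how the summands are indexed.

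To make the indexing by group elements match the notation $\oplus_{g \in G} A_n^g$ used in the statement and exploited later, I would produce the isomorphism explicitly rather than abstractly. Using the identification $G \cong \hat{G}$, $g \mapsto \zt^g$, from the preceding remark, I would define central elements $p_g = \frac{1}{|G|} \sum_{h \in G} \overline{\zt^g(h)}\, w_h$ inside the commuting copy of $C^*(G)$. The orthogonality relation $\sum_{g \in G} \zt^g(h) = \dt(h, 1_G)\,|G|$ recorded in the remark gives at once $p_g^* = p_g$, $w_k p_g = \zt^g(k)\, p_g$, $p_g p_{g'} = \dt(g,g')\, p_g$, and $\sum_{g \in G} p_g = 1$, so the $p_g$ are mutually orthogonal minimal central projections with $p_g C^*(G) \cong \C$. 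Setting $A_n^g := p_g \big(C^*(G,A_n,\af)\big) \cong p_g C^*(G) \otimes A_n \cong A_n$, the \cp\ is the internal direct sum of these $|G|$ copies of $A_n$.

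The computations are entirely routine, so there is no serious obstacle. The one point genuinely requiring care is that the \cp\ untwists as an honest tensor product, i.e.\ that the $w_g$ form a non-projective representation; this is exactly where the hypothesis that each $\af_i = \Ad(\pi_i(g))$ arises from a \emph{genuine} unitary representation $\pi_i$ (so the implementing unitaries multiply without a $2$-cocycle) enters, and it is already packaged into the cited Example 4.10 of \cite{Ph-S}.
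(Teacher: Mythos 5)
Your proposal is correct and follows essentially the same route as the paper: the paper likewise obtains the proposition from the inner-action untwisting $C^*(G,A_n,\af)\cong C^*(G)\otimes A_n$ (Example 4.10 of \cite{Ph-S}) combined with $C^*(G)\cong C(G)\cong \C^{|G|}$ for finite abelian $G$. Your supplementary explicit description via the minimal central projections $p_g$ is just a repackaging of the explicit isomorphism $T$ that the paper constructs immediately after the proposition using the unitaries $X_n^g$ and character orthogonality (note only that the paper's pairing $\zt$ is set up for cyclic $G$, so for general abelian $G$ your indexing step tacitly extends it via the structure theorem, a harmless point since the abstract isomorphism needs no pairing).
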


Now let's find an explicit formula for the isomorphism in the above proposition as well as the corresponding direct system.\\

For any $g\in G$, Let $V_n^g \in U(A_n)$ denote the finite tensor $\otimes_{i=1}^{n}\pi_i(g)$. Define $X_{n}^g=(\zt^h(g)V_{n}^g)_{h\in G}\in \oplus_{g\in G} A_n^g.$ Embed $A_n$ into $\oplus_{g\in G}A_n^g$ by the map $a\mapsto (a^h)_{h\in G}$, where $a^h=a$ for all $h\in G$. We can check that $\{X_{n}^g\}_{g\in G}$ are unitary elements and $A_n\cup\{X_{n}^g\}_{g\in G}$ satisfy the relation:
\begin{equation*}
X_{n}^{g}X_{n}^{h}=X_{n}^{gh},\quad X_{n}^{g}aX_{n}^{g*}=\af_g(a),\quad \forall g,h\in G, a\in A.
\end{equation*}
Let $U_{n}^g$ be the canonical unitaries of $C^*(G,A_n,\af)$. By Lemma 3.17 of \cite{Ph-S}, there is a *-homomorphism  $T\colon C^*(G,A_n,\af) \mapsto \oplus_{g\in G}A_n^g$ which sends $U_g$ to $X_g$ and maps $A_n$ to $\oplus_{g\in G}A_n^g$ the same way we embeded $A_n$ in $\oplus_{g\in G}A_n^g$. Now that this homomorphism is an isomorphism is equivalent to that the matrix $(\zt^h(g))_{h,g\in G}$ is invertible. But $(\zt^h(g))_{h,g\in G}$ is actually unitary(after normalization): For any two elements $h_0,h\in G$,
\begin{eqnarray*}
\sum_{g\in G}{\zt^{h_0}(g)\overline{\zt^h(g)}}
&=&\sum_{g\in G}{\zt^{h_0}(g)\zt^{h^{-1}}(g)}=\sum_{g\in G}{\zt^{h_0h^{-1}}(g)}\\
&=&\dt(h_0h^{-1},1)|G|=\dt(h_0,h)|G|
\end{eqnarray*}

For each n, we can define an isomorphism as above. This enables us to turn the direct system ${\varinjlim } C^{\ast}(G,A_n,\af)$ into a direct system ${\varinjlim } \oplus_{g\in G}A_n^g$. Let $\phi_{i,j}$ be the connecting map of the latter system, we have the following proposition:
\begin{prp}\label{C-Map}
For any positive integers $n\leq m$ and any $g\in G$, let $P_{n,m}^g$ be an element of $\otimes_{i=n}^{m}\B(H_i)$ defined by:
\begin{equation*}
P_{n,m}^g=|G|^{-1}\sum_{h\in G}{\zt^h(g)\otimes_{i=n}^{m}\pi_i(h)}
\end{equation*}
Then $P_{n,m}^g$ are \mops\ sum up to the 1 such that the connecting map $\phi_{n-1,m}$ is given by:
\begin{equation*}
 (y_{n-1}^h)_{h\in G}\mapsto (y_m^h)_{h\in G}, \quad \text{where}\,\, y_m^h=\sum_{g\in G}{y_{n-1}^g\otimes P_{n,m}^{hg^{-1}}}.   
\end{equation*}
We shall use $P_n^g$ to denote $P_{n,n}^g$
\end{prp}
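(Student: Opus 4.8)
The plan is to establish the two assertions in turn: first that the $P_{n,m}^{g}$ form a partition of unity into mutually orthogonal projections, and then that the displayed formula for $y_m^h$ really does compute the connecting map $\phi_{n-1,m}$. The guiding observation is that $P_{n,m}^{g}=|G|^{-1}\sum_{h}\zt^h(g)\,\rho(h)$ is a spectral projection of the tail representation $\rho=\otimes_{i=n}^{m}\pi_i\colon G\to \otimes_{i=n}^{m}\B(H_i)$ obtained by Fourier inversion against $\zt$, and that $\phi_{n-1,m}$ is by construction the composite $T_m\circ\iota\circ T_{n-1}^{-1}$, where $\iota\colon C^*(G,A_{n-1},\af)\to C^*(G,A_m,\af)$ is the inclusion induced by $A_{n-1}=\otimes_{i=1}^{n-1}\B(H_i)\hookrightarrow \otimes_{i=1}^{m}\B(H_i)=A_m$, $a\mapsto a\otimes 1$. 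Since $\iota(U_g)=U_g$ and $\iota(a)=a\otimes 1$, transporting through the isomorphisms $T_{n-1},T_m$ of Proposition \ref{ISO_C} shows that $\phi_{n-1,m}$ is the unique $*$-homomorphism satisfying $\phi_{n-1,m}(X_{n-1}^g)=X_m^g$ and $\phi_{n-1,m}((a)_h)=(a\otimes 1)_h$ for $a\in A_{n-1}$.

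For the first assertion I would compute directly from the definition, using the character identities $\zt^g(hh')=\zt^g(h)\zt^g(h')$, $\overline{\zt^g(h)}=\zt^g(h^{-1})$, $\zt^g(h)=\zt^h(g)$, and the orthogonality relation $\sum_{h}\zt^g(h)\overline{\zt^{g'}(h)}=\dt(g,g')|G|$ recorded in the text. Expanding $P_{n,m}^{g}P_{n,m}^{g'}$ and substituting $k=hh'$ collapses the double sum via orthogonality to $P_{n,m}^{g}P_{n,m}^{g'}=\dt(g,g')\,P_{n,m}^{g}$; self-adjointness follows from $\overline{\zt^g(h)}=\zt^g(h^{-1})$ together with $\rho(h)^*=\rho(h^{-1})$; and $\sum_g P_{n,m}^{g}=\rho(1_G)=1$ follows from $\sum_g\zt^g(h)=\dt(h,1_G)|G|$. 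This gives mutually orthogonal projections summing to $1$.

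For the second assertion I would take the right-hand side as the definition of a candidate map $\Psi\colon (y^h)_h\mapsto\bigl(\sum_g y^g\otimes P_{n,m}^{hg^{-1}}\bigr)_h$ and verify three things. First, $\Psi$ is a $*$-homomorphism: linearity and $*$-preservation are immediate from self-adjointness of the $P_{n,m}^{k}$, and multiplicativity is exactly where orthogonality enters, since in the $h$-component of $\Psi(y)\Psi(z)$ the cross terms carry the factor $P_{n,m}^{hg^{-1}}P_{n,m}^{hg'^{-1}}=\dt(g,g')P_{n,m}^{hg^{-1}}$, leaving $\sum_g y^g z^g\otimes P_{n,m}^{hg^{-1}}$, which is the $h$-component of $\Psi(yz)$. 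Second, $\Psi$ agrees with $\phi_{n-1,m}$ on the diagonal copy of $A_{n-1}$: plugging $y^h\equiv a$ gives $h$-component $a\otimes\sum_g P_{n,m}^{hg^{-1}}=a\otimes 1$, using that the $P_{n,m}^{k}$ sum to $1$. Third, $\Psi(X_{n-1}^g)=X_m^g$: substituting $y^{g'}=\zt^{g'}(g)V_{n-1}^g$ into the formula and using the character identities reduces $\sum_{g'}\zt^{g'}(g)P_{n,m}^{hg'^{-1}}$ to $\zt^h(g)\,\rho(g)$, whence the $h$-component becomes $\zt^h(g)\,(V_{n-1}^g\otimes\rho(g))=\zt^h(g)V_m^g$ because $V_{n-1}^g\otimes\rho(g)=\otimes_{i=1}^{m}\pi_i(g)=V_m^g$. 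Since $\{(a)_h:a\in A_{n-1}\}\cup\{X_{n-1}^g:g\in G\}$ generates $\oplus_g A_{n-1}^g$ (being the image under the isomorphism $T_{n-1}$ of the generators $A_{n-1}$ and $U_g$ of the crossed product), and both $\Psi$ and $\phi_{n-1,m}$ are $*$-homomorphisms agreeing on this set, they coincide.

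I expect the main obstacle to be the third verification, $\Psi(X_{n-1}^g)=X_m^g$: it requires carefully collapsing the double character sum $|G|^{-1}\sum_{g'}\zt^{g'}(g)\sum_l\zt^l(hg'^{-1})\rho(l)$, using commutativity and the homomorphism property of $\zt$ in its two slots to isolate a single Kronecker delta $\dt(g,l)$, and it is the step where an unnoticed inverse or conjugation error in handling $\zt$ would be easiest to commit. The remaining computations are routine once the orthogonality relation for $\zt$ and the projection identity $P_{n,m}^{g}P_{n,m}^{g'}=\dt(g,g')P_{n,m}^{g}$ are in hand.
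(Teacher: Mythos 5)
Your proposal is correct and follows essentially the same route as the paper: verify the projection identities via the character orthogonality relations for $\zt$, then check that the candidate map agrees with the connecting map on the generators $(a)_{h\in G}$ and $X_{n-1}^g$ of $\oplus_{g\in G}A_{n-1}^g$. The only cosmetic differences are that the paper reduces to the case $m=n$ while you treat general $m$ uniformly via $\rho=\otimes_{i=n}^{m}\pi_i$, and that you explicitly verify multiplicativity of the candidate $*$-homomorphism, which the paper leaves implicit.
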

\begin{proof}
\Wolog\ we may assume $m=n$. Let $P_{n}^g=|G|^{-1}\sum_{h\in G}{\zt^h(g)\pi_n(h)}$. We can compute:
\begin{eqnarray*}
(P_n^g)^*&=&|G|^{-1}\sum_{h\in G}{\overline{\zt^h(g)}(\pi_n(h))^*}\\
&=&|G|^{-1}\sum_{h\in G}{\zt^{h^{-1}}(g)\pi_n(h^{-1})}=P_n^g.\\
P_n^{g_0}P_n^g&=&|G|^{-2}\sum_{h,k\in G}{\zt^h(g_0)\zt^k(g)\pi_n(h)\pi_n(k)}\\
&=&|G|^{-2}\sum_{h,k\in G}{\zt^h(g_0g^{-1})\zt^{hk}(g)\pi_n(hk)}\\
&=&|G|^{-2}\sum_{h\in G}{\zt^h(g_0g^{-1})|G|P_n^g}\\
&=&|G|^{-1}\dt(g_0,g)|G|P_n^g=\dt(g_0,g)P_n^g\\
\sum_{g\in G}{P_n^g}&=&|G|^{-1}\sum_{h,g\in G}{\zt^h(g)\pi_n(h)}\\
&=&|G|^{-1}\sum_{h\in G}{\dt(h,1_{G})|G|\pi_n(h)}=\pi_n(1_{G})=I
\end{eqnarray*}
The above computation shows that $\{P_{n}^g\}_{g\in G}$ are mutually orthogonal projections sum up to 1. Define a *-homomorphism $\phi_{n-1,n}:\oplus_{g\in G}A_{n-1}^g \rightarrow \oplus_{g\in G}A_n^g$ by:
\begin{equation*}
 (y_{n-1}^h)_{h\in G}\mapsto (y_n^h)_{h\in G}, \quad \text{where}\,\,
 y_n^h=\sum_{g\in G}{y_{n-1}^g\otimes P_n^{hg^{-1}}}.
\end{equation*}
 We shall verify that $\phi_{n-1,n}$ is the induced connecting map on $\varinjlim  \oplus_{g\in G}A_n^g$.
 The connecting map on the crossed product is determined by $a\mapsto a\otimes 1$ and $U_{n-1}^g\mapsto U_{n}^g$. Adopt notation in proposition \ref{ISO_C}, let $T\colon C^*(G,A_n,\af)\mapsto \oplus_gA_n^{g}$ be the isomorphism such that $T(a)=(a,\dots,a)$ and $T(U_{n-1}^g)=X_{n-1}^g=(\zt^h(g)V_{n-1}^g)_{h\in G}$. For any $l\in G$, let $\phi_{n-1,n}^l$ be the composition of $\phi_{n-1,n}$ and the natural projection $\pi^l:\oplus_{g\in G}A_n^g\rightarrow A_n^l$. We have: 
\begin{eqnarray*}
 \phi_{n-1,n}^l((a^h)_{g\in G})&=&\sum_{g\in G}{a\otimes P_n^{lg^{-1}}}\\
  &=& a\otimes\sum_{g\in G}{P_n^{lg^{-1}}}=a\otimes 1\\
 \phi_{n-1,n}^l(X_{n-1}^g)&=&\sum_{h\in G}{\zt^h(g)V_{n-1}^g\otimes P_n^{lh^{-1}}}\\
 &=&\sum_{h\in G}{\zt^h(g)V_{1,n-1}^g\otimes |G|^{-1}\sum_{k\in G}{\zt^k(lh^{-1})\pi_n(k)}}\\
 &=&|G|^{-1}\sum_{h,k\in G}{\zt^h(g)\zt^k(lh^{-1})V_{n-1}^g\otimes \pi_n(k)}\\
 &=&|G|^{-1}\sum_{h,k\in G}{\zt^h(g)\zt^k(lh^{-1})V_{n-1}^g\otimes \pi_n(k)}\\
 &=&|G|^{-1}\sum_{h,k\in G}{\zt^h(gk^{-1})\zt^l(k)V_{n-1}^g\otimes \pi_n(k)}\\
 &=&|G|^{-1}\sum_{k\in G}{\dt(g,k)|G|\zt^l(k)V_{n-1}^g\otimes \pi_n(k)}\\
 &=&\zt^l(g)V_{n-1}^g\otimes \pi_n(g)=\pi^l(X_{n}^g)
 \end{eqnarray*}
Hence $\phi_{n-1,n}(X_{1,n-1}^g)=X_{1,n}^g$, and the proof is complete.
\end{proof}

\begin{rmk}\label{C-MapR}
We can get our unitaries back from the projections constructed in the above proof as follows: $\pi_n(g)=\sum_{l\in G}{{\overline{\zt^{g^{-1}}(l)}}P_n^l}$. We could also observe that the projections $P_n^g$ are invariant projections, but some of them may be 0.
\end{rmk}

Suppose the group $G$ is finite cyclic, recall that $g\mapsto \zt^g(\bullet)$ gives an isomorphism between G and its dual. The dual action on the crossed product turns out to have a very simple description under this isomorphism:
\begin{prp}\label{DualAction}
We identify the crossed product $C^{\ast}(G,A,\af)$ with the direct system $\varinjlim \oplus_{g \in G}A_n^g$, and identify $\hat{G}$ with G using the bihomomorphism $\zt$. Then the dual action $\hat{\af}$, when restrict to $\oplus_{g \in G}A_n^g$, is given by:
\begin{equation*}
 \hat{\af}_g((y^h)_{h\in G})=(z^h)_{h\in G}, \quad \text{where}\,\, z^h=y^{hg}, \forall h\in G.
\end{equation*}

\begin{proof}
The dual action $\hat{\af}$ is defined by $\hat{\af}_{\phi}(f)(t)=\phi(t)f(t)$, for all $\phi \in \hat{G}$ and $f \in C^{\ast}(G,A,\af)$. 
Fix a positive integer n,let $\{U_{n}^g\}$ be the canonical unitaries in $C^{\ast}(G,A_n,\af)$, then $\hat{\af}_h(U_{n}^g)=\zt^h(g)U_{n}^g$. Fix an element $k \in G$, define a map $\beta_k\colon \oplus A_n^g\rightarrow \oplus A_n^g$ by $\beta((y^g)_{g\in G})=(y^{gk})_{g\in G}$. Recall that the isomorphism $T$ between $C^{\ast}(G,A_n,\af)$ and $\oplus A_n^g$ sends $(U_{n}^h)$ to $(\zt^g(h)V_{n}^h)_{g\in G}$. 
Then we the following diagram is commutative:\newline
\begin{math}
\begin{CD}
U_{n}^h @>T>> (\zt^g(h)V_{n}^h)_{g\in G}\\
@VV\hat{\af}_kV     @VV\beta_kV\\
\zt^k(h)U_{n}^h @>T>> (\zt^{gk}(h)U_{n}^h)_{g\in G}\\
\end{CD}
\end{math}

A similar commutative diagram holds for elements in $A_n$. Since these elements generate the corresponding $C^{\ast}$-algebras, we can conclude that $\beta_k$ corresponds to $\hat{\af}_k$ under the isomorphism $T$.
\end{proof} 
\end{prp}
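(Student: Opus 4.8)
The plan is to verify the formula by checking it on a generating set and then invoking density — the commutative-diagram strategy. First I would recall the defining property of the dual action: for a finite abelian group, under the identification $\hat{G} \cong G$ via $\zt$, the automorphism $\hat{\af}_k$ fixes $A$ pointwise and scales each canonical unitary by the corresponding character, i.e. $\hat{\af}_k(a) = a$ for $a \in A_n$ and $\hat{\af}_k(U_n^h) = \zt^k(h) U_n^h$. With the candidate map $\beta_k$ on $\oplus_{g\in G} A_n^g$ defined by the index shift $\beta_k((y^g)_{g\in G}) = (y^{gk})_{g\in G}$, the goal is to show $\beta_k \circ T = T \circ \hat{\af}_k$, where $T$ is the isomorphism of Proposition \ref{ISO_C} sending $a \mapsto (a,\dots,a)$ and $U_n^h \mapsto (\zt^g(h)V_n^h)_{g\in G}$.

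Since $A_n$ together with the unitaries $\{U_n^h\}_{h \in G}$ generate $C^*(G,A_n,\af)$ as a C*-algebra, it suffices to check the intertwining on these two families. On $A_n$ the element $T(a)$ is the constant tuple, which any index shift leaves fixed, so $\beta_k(T(a)) = T(a) = T(\hat{\af}_k(a))$. On the unitaries the computation is where the bihomomorphism property of $\zt$ enters: $\beta_k(T(U_n^h)) = (\zt^{gk}(h)V_n^h)_{g\in G}$, and since $\zt(\bullet, h)$ is a homomorphism we have $\zt^{gk}(h) = \zt^g(h)\zt^k(h)$, so this tuple equals $\zt^k(h)(\zt^g(h)V_n^h)_{g\in G} = T(\zt^k(h)U_n^h) = T(\hat{\af}_k(U_n^h))$. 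This is precisely the asserted commutative diagram, and density finishes the level-$n$ statement.

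To promote this to a genuine description of the dual action on the direct limit, I would check that the decomposition $C^*(G,A,\af) = \varinjlim \oplus_{g\in G}A_n^g$ is equivariant for $\hat{\af}$ — equivalently that the family $\{\beta_k\}$ commutes with the connecting maps $\phi_{n-1,n}$ of Proposition \ref{C-Map}. This is essentially automatic from the abstract picture: the dual action preserves each subalgebra $C^*(G,A_n,\af)$ because the connecting homomorphism sends $U_{n-1}^g$ to $U_n^g$ and fixes $A$, so scaling by $\zt^k$ is consistent across stages; hence $\hat{\af}_k$ restricts to each stage and the level-$n$ formula assembles into the claimed global one. The only point requiring genuine care — and the main potential obstacle — is the bookkeeping of conventions: one must confirm that the chosen identification $\hat{G} \cong G$ and the convention in $\hat{\af}_k(U_n^h) = \zt^k(h)U_n^h$ produce the index shift in the stated direction $z^h = y^{hg}$ (using commutativity $hg = gh$ to match the shift $(y^{gk})_g$ of $\beta_k$), rather than the opposite shift $y^{hg^{-1}}$. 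Everything else is the routine verification above.
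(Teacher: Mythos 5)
Your proposal is correct and follows essentially the same route as the paper: you verify the intertwining $\beta_k \circ T = T \circ \hat{\af}_k$ on the generators $A_n$ and $\{U_n^h\}$ via the identity $\zt^{gk}(h)=\zt^g(h)\zt^k(h)$ and conclude by density, exactly as in the paper's commutative-diagram argument. Your explicit check that $\beta_k$ is compatible with the connecting maps $\phi_{n-1,n}$ (and your remark that abelianness reconciles $z^h=y^{hg}$ with the shift $(y^{gk})_g$) makes a point the paper leaves implicit, but it is the same proof.
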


Now we are ready to state our classification result for strict Rokhlin property:
\begin{prp}\label{SRP1} 
 Let $\af\colon G\mapsto \Aut(A)$ be a product-type action, where $G$ is finite cyclic and $A$ is UHF. Then $\af$ has the strict Rokhlin property, \ifo\ up to a telescope, for any $n\in \N$ the projections $P_n^g$ for $g\in G$ constructed in Proposition \ref{C-Map} are mutually Murray-von Neumann equivalent.
\end{prp}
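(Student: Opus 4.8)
The plan is to translate the projection condition into ordinary representation theory and then read the (strict) Rokhlin property off the resulting direct-sum picture. First I would record the key dictionary. For a fixed (possibly telescoped) block, write $V^g=\otimes_{i\in\text{block}}\pi_i(g)$ for the representation on $K_n=\otimes_{i\in\text{block}}H_i$. By the computation in Proposition \ref{C-Map}, $P_n^g=|G|^{-1}\sum_{h\in G}\zt^h(g)V^h$ is exactly the spectral projection of $V$ onto the $g$-th isotypic component, so $\rank(P_n^g)$ is the multiplicity of that character in $V$. Hence the $P_n^g$, $g\in G$, are mutually Murray--von Neumann equivalent (i.e.\ of equal rank in the matrix algebra $B(K_n)$) if and only if every character of $G$ occurs in $V$ with a common multiplicity $m$, that is $V\cong m\cdot\rho$ where $\rho$ is the regular representation of $G$; equivalently $\Tr V^g=\chi_n(g)$ vanishes for $g\neq 1_G$. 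I will use this equivalence throughout, together with the identification $K_n\cong\C^m\otimes\ell^2(G)$ under which $V^g=1\otimes\rho(g)$.

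For the ``if'' direction, assume the telescope has been chosen so that every block is a multiple of the regular representation. Under the identification $K_n\cong\C^m\otimes\ell^2(G)$ let $q_h\in B(\ell^2(G))$ be the rank-one projection onto $\C\dt_h$; then the projections $f_h=1\otimes q_h\in B(K_n)\subset A$ are mutually orthogonal, sum to $1$, and are permuted \emph{exactly}, $\af_g(f_h)=1\otimes\rho(g)q_h\rho(g)^*=f_{gh}$, since $\rho(g)q_h\rho(g)^*=q_{gh}$. Given $F$ and $\ep$, I would approximate $F$ to within $\ep/2$ by elements of an initial segment $\otimes_{i\le M}B(H_i)$ and then take the block index $n$ past $M$: the $f_h$ then lie in a later tensor factor and commute with that initial segment, giving $\|f_h a-af_h\|<\ep$ for $a\in F$. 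Thus the $f_h$ are honest (indeed exactly equivariant) Rokhlin projections, and $\af$ has the strict Rokhlin property.

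For the ``only if'' direction I would build the telescope greedily, so the core claim is: for every $N_0$ there is $N>N_0$ with $\otimes_{i=N_0+1}^{N}\pi_i$ a multiple of the regular representation. To prove it, apply the strict Rokhlin property with $F$ a set of matrix units for $B(L_{N_0})$, where $L_{N_0}=\otimes_{i\le N_0}H_i$, and with a small tolerance $\ep$, obtaining $e_g$. Almost commuting with all matrix units of $B(L_{N_0})$ forces each $e_g$ to be close to the relative commutant $B(L_{N_0})'\cap A=1\otimes\bigotimes_{i>N_0}B(H_i)$, via the canonical conditional expectation obtained by averaging $\Ad$ over the matrix units. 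Perturbing these images to genuine mutually orthogonal projections summing to $1$ inside a finite stage $B(\tilde L)$, $\tilde L=\otimes_{i=N_0+1}^{N}H_i$, I get $\tilde e_g$ with $\|\Ad(\tilde V^g)(\tilde e_h)-\tilde e_{gh}\|<\ep'$, where $\tilde V^g=\otimes_{i=N_0+1}^N\pi_i(g)$ and $\af$ restricts on the tail to $\Ad(\tilde V)$. Finally I would set $\hat e_g=\Ad(\tilde V^g)(\tilde e_{1_G})$: these are projections permuted \emph{exactly} and of common rank $r=\rank(\tilde e_{1_G})$, and for $\ep'$ below an absolute constant they are so close to the orthogonal family $\tilde e_g$ that $\sum_g\hat e_g$ is invertible and $\dim\tilde L=|G|r$. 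The map $w\otimes\dt_g\mapsto\tilde V^g w$ from $W\otimes\ell^2(G)$ to $\tilde L$, with $W=\hat e_{1_G}\tilde L$, is then a $G$-equivariant linear isomorphism onto $\tilde L$, exhibiting $\tilde V\cong r\cdot\rho$; i.e.\ the block is a multiple of the regular representation. Setting $n_1=1$ and $n_{j+1}=N+1$ inductively yields the desired telescope.

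The main obstacle is the last passage from \emph{approximate} to \emph{exact}: one must choose the Rokhlin tolerance small enough, \emph{uniformly in the stage}, that pushing the $e_g$ into the tail and replacing $\tilde e_g$ by $\hat e_g$ keep $\ep'$ under the absolute thresholds (roughly $\ep'<1$ to preserve ranks, and $\ep'$ small enough that $\sum_g\hat e_g$ stays invertible) needed to force the clean direct-sum decomposition. This dimension-independence is exactly what makes the representation-theoretic rigidity usable. The conditional-expectation constant does depend on $N_0$, but since $\ep$ may be chosen after $N_0$ this causes no circularity.
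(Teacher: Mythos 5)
Your proposal is correct, but it takes a genuinely different route from the paper's in both directions. Your opening dictionary --- $P_n^g$ is the isotypic projection, so mutual Murray--von Neumann equivalence of the $P_n^g$ in a matrix block means the block representation is a multiple $m\cdot\rho$ of the regular representation --- is something the paper establishes only in the lemma \emph{after} this proposition (via traces and characters), and you exploit it directly on both sides. For the ``if'' direction the paper works with the projections themselves: it picks partial isometries $W^{1,g}$ between $P_n^1$ and $P_n^g$ and assembles exactly permuted Rokhlin projections $Q^k=\frac{1}{|G|}\sum_{g,h}\zt^k(g^{-1}h)W^{g,h}$ by a Fourier transform over the cyclic group, whereas your identification $K_n\cong\C^m\otimes\ell^2(G)$ with $f_h=1\otimes q_h$ is the model-action picture the paper deploys only later, for general finite groups; the two constructions are equivalent in substance, yours being cleaner. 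The larger divergence is the ``only if'' direction: the paper passes to the crossed product, uses that the strict Rokhlin property makes the dual action approximately representable and hence trivial on $K_0(C^*(G,A,\af))$, and reads equality of the classes $[1\otimes P_{n+1,m}^g]$ off a finite stage of the explicit inductive limit $\varinjlim\oplus_{g}A_n^g$; you instead prove a finite-dimensional rigidity statement, pushing the Rokhlin family into the tail by the conditional expectation, stabilizing to the exactly permuted $\hat e_g=\Ad(\tilde V^g)(\tilde e_{1})$, and forcing $\tilde V\cong r\cdot\rho$ by the dimension count $\dim\tilde L=|G|\,\rank(\tilde e_{1})$ together with the equivariant surjection $w\otimes\dt_g\mapsto\tilde V^g w$ --- all of which I checked goes through. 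Your route is more elementary (no crossed product, no duality, no $K$-theory) and, notably, uses cyclicity of $G$ only in the dictionary, so it in effect proves the general finite-group case of Theorem \ref{main} directly, where the paper reduces to cyclic subgroups; what the paper's route buys is the structural connection to approximate representability of the dual action, which it reuses in the tracial case (Proposition \ref{TRP1}). The steps you compress --- the stage-dependent conditional-expectation constant (harmless, as you correctly note, since $\ep$ is chosen after $N_0$), the perturbation to exact \mops\ summing to $1$ in a finite stage (semiprojectivity of $\C^{|G|}$, as in the paper's first proposition, plus the observation that a projection within distance $1$ of $1$ equals $1$), and the threshold $|G|\ep'<1$ for invertibility of $\sum_g\hat e_g$ --- are standard and correctly flagged rather than gaps.
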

\begin{proof}
  A telescope does not change the action, so let's assume that for any $n\in \N$, the projections $P_{n}^g$ are mutually equivalent. Let $F$ be a finite set in $A$. \Wolog\ we may assume that F is in $A_{n-1}$, for some $n\in \N$. Recall that ${P}_{n}^g=|G|^{-1}\sum_{h\in G}\zt^h(g)\pi_{n}(h)$. Since these projections are equivalent, for each $g\in G$, there exists a partial isometry $W^{1,g}$ such that $W^{1,g}W^{1,g}*=P_{n}^1$ and $W^{1,g}*W^{1,g}=P_{n}^g$ ((Here 1=$1_{G}$ is the identity element). Let $W^{g,1}$ denote the conjugate of $W^{1,g}$ and let $W^{g,h}=W^{g,1}W^{1,h}$. Let $\dt(\cdot,\cdot)$ be the Kronecker delta, we have:
\begin{equation*}
W^{g,g}=P_n^g,\quad W^{g,h}W^{k,l}=\dt(h,k)W^{g,l},\,\, \forall g,h,k,l\in G.  
\end{equation*}
 Let $Q^k=\frac{1}{|G|}\sum_{g,h\in G}\zt^k(g^{-1}h)W^{g,h}$, for any $k$ in $G$. Now we can compute that: 
\begin{align*}
Q^kQ^j&=\frac{1}{|G|^2}(\sum_{g_1,h_1\in G}\zt^k(g_1^{-1}h_1)W^{g_1,h_1})(\sum_{g_2,h_2\in G}\zt^j(g_2^{-1}h_2)W^{g_2,h_2})\\
&=\frac{1}{|G|^2}\sum_{g_1,h_1,h_2\in G}\zt^k(g_1^{-1}h_1)\zt^j(h_1^{-1}h_2)W^{g_1,h_2}\\
&=\frac{1}{|G|^2}\sum_{g_1,h_1,h_2\in G}\zt^k(g_1^{-1})\zt^{kj^{-1}}(h_1)\zt^j(h_2)W^{g_1,h_2}\\
&=\dt(k,j)\frac{1}{|G|}\sum_{g_1,h_2\in G}\zt^k(g_1^{-1})\zt^j(h_2)W^{g_1,h_2}\\
&=\dt(k,j)\frac{1}{|G|}\sum_{g_1,h_2\in G}\zt^k(g_1h_2^{-1})W^{g_1,h_2}=\dt(k,j)Q^k\\
\end{align*}
\begin{align*}
(Q^k)^*&=\frac{1}{|G|}\sum_{g,h\in G}\zt^k(g^{-1}h)(W^{g,h})^*\\
     &=\frac{1}{|G|}\sum_{g,h\in G}\overline{\zt^k(g^{-1}h)}W^{h,g}\\
&=\frac{1}{|G|}\sum_{g,h\in G}\zt^k(h^{-1}g)W^{h,g}=Q^k\\
\end{align*}
\begin{align*}
\sum_{k\in G}Q^k=&\sum_{k\in G}\frac{1}{|G|}\sum_{g,h\in G}\zt^k(g^{-1}h)W^{g,h}\\
&=\frac{1}{|G|}\sum_{g,h\in G}\dt(g,h)W^{g,h}\\
&=\sum_{g\in G}W^{g,g}=1\\
\end{align*}

\begin{align*}
 \af_g(Q^k)&=\pi_n(g)Q^{k}\pi_n(g)^*\\
&=(\sum_{l\in G}{\zt^{g^{-1}}(l)P_n^l)}(\frac{1}{|G|}\sum_{r,h\in G}{\zt^k(r^{-1}h)W^{r,h}})(\sum_{s\in G}{\zt^{g}(s)P_n^s})\\
&=\frac{1}{|G|}\sum_{l,r,h,s\in G}\zt^{g^{-1}}(l)\zt^k(r^{-1}h)\zt^g(s)P_n^lW^{r,h}P_n^s\\
&=\frac{1}{|G|}\sum_{r,h}\zt^{g^{-1}}(r)\zt^k(r^{-1}h)\zt^g(h)W^{r,h}\\
&=\frac{1}{|G|}\sum_{r,h}\zt^{gk}(r^{-1}h)W^{r,h}=Q^{gk}
\end{align*}

Let's consider the projections $\{1\otimes Q^k\}_{k\in G}$. It's easy to check that they are Rokhlin projections for $F$, therefore $\af$ has the strict Rokhlin property.\newline
 
Conversely, Suppose the action has the strict Rokhlin property. Then the dual action $\hat{\af}$ is strictly approximately representable, and therefore induce trivial action on $K_0(C^{\ast}(G,B,\af))$ (See Proposition 2.4 of \cite{Ph-F} for details). Now fix $n\in \mathbb{N}$. For each $h\in G$, let $I_h=(0,\dots,1_{A_n}^h,0,\dots)$ be an element of $\oplus_{g\in G} A_n^g$. Let $\phi_{n,\infty}$ be the connection map from $\oplus_{g\in G} A_n^g$ to $\varinjlim{\oplus_{g\in G} A_i^g}$ and let $\eta_h=\phi_{n,\infty}(I_h)$. Since $\hat{\af}$ acts trivially on $K_0(C^{\ast}(G,A,\af))$, we have: $\hat{\af}_h([\eta_k])=[\eta_k]$, for any $h,k\in G$. It's easy to see that $\hat{\af}$ commutes with connecting maps so we get:$[\phi_{n,\infty}(\hat{\af}_h(I_k))]=[\phi_{n,\infty}(I_k)]$. For the $K_0$ groups of the inductive limit, we have $\Ker(\phi_{n,\infty})=\cup_{i=n+1}^{\infty}\Ker\phi_{n,i}$. So there exist some $m>n$, such that $[\phi_{n,m}(\hat{\af}_h(I_k))]=[\phi_{n,m}(I_k)]$. Now by Proposition \ref{DualAction}, we have $\hat{\af}_h(I_k)=I_{hk}$.\newline

Let's define ${P}_{n,m}^g=|G|^{-1}\sum_{h\in G}\zt^h(g)\otimes_{i=n}^{m}\pi_{i}(h)$. By Proposition \ref{C-Map}, $P_{n,m}^g$ are mutually orthogonal projections sum up to 1, such that the connecting map $\phi_{n,m}$ is given by:
\begin{equation*}
 (y_{n-1}^h)_{h\in G}\mapsto (y_n^h)_{h\in G}, \quad \text{where}\,\,\, y_n^h=\sum_{g\in G}{y_{n-1}^g\otimes P_{n,m}^{hg^{-1}}}.   
\end{equation*}

 Hence $\phi_{n,m}(I_k)=(1\otimes P_{n+1,m}^{gk^{-1}})_{g\in G}$. Now  $[\phi_{n,m}(\hat{\af}_h(I_k))]=[\phi_{n,m}(I_k)]$ implies:

\begin{equation*}
 [(1\otimes P_{n+1,m}^{g(hk)^{-1}})_{g\in G}]=[(1\otimes P_{n+1,m}^{gk^{-1}})_{g\in G}],\forall h,k\in G
\end{equation*}
Since $K_0(\oplus A_n^g)=\oplus K_0(A_n^g)$ and $A_n^g=A_n$ is an matrix algebra, equality in the $K_0$ group implies that $1\otimes P_{n+1,m}^{g(hk)^{-1}}$ is Murray-Von Nuemann equivalent to $1\otimes P_{n+1,m}^{gk^{-1}}$, for all $g$,$h$,$k$ in $G$. Therefore, for any $n\in \N$, we can find a $m=m(n)$, such that the projections $P_{n+1,m}^{g}$ are mutually \mvnt. Define a telescope inductively by setting $n_1=1$ and $n_{i+1}=m(n_i)$ completes the proof. 
\end{proof}

Note that two projections are equivalent in a UHF algebra \ifo\ they have the same trace. This enables us to reformulate Proposition \ref{SRP1} in terms of the characters of the unitary representations.
\begin{lem}
Let $P_n^g=|G|^{-1}\sum_{h\in G}\zt^h(g)\pi_{n}(h)$ be the projections defined as in Proposition \ref{C-Map}. Let $d_n$ be the dimension of $H_n$ and let $\Tr$ be the unnormalized trace on $\B(H_n)$. Let $\chi_n=\Tr\circ\pi_n$ be the character of $\pi_n$. Then the projections $P_n^g$ are mutually \mvnt \ifo\ $\chi_n(g)=\dt(g,1_{G})d_n$ for any $g\in G$, where $\dt$ is the Kronecker delta.   
\end{lem}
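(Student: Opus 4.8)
The plan is to reduce mutual Murray-von Neumann equivalence of the family $\{P_n^g\}_{g\in G}$ to equality of traces, and then to invert a finite Fourier transform over $G$. Since $\B(H_n)\cong M_{d_n}(\C)$ is a full matrix algebra, two projections in it are Murray-von Neumann equivalent \ifo\ they have the same rank, equivalently the same unnormalized trace $\Tr$. Thus the projections $P_n^g$ are mutually equivalent \ifo\ the quantity $\Tr(P_n^g)$ is independent of $g\in G$. The first step is to compute this trace directly from the definition of $P_n^g$, giving
\[
\Tr(P_n^g)=|G|^{-1}\sum_{h\in G}\zt^h(g)\,\Tr(\pi_n(h))=|G|^{-1}\sum_{h\in G}\zt^h(g)\,\chi_n(h),
\]
so that $\Tr(P_n^g)$ is, up to the scalar $|G|^{-1}$, the value at $g$ of the Fourier transform of $\chi_n$ with respect to the pairing $\zt$.

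For the direction $(\Leftarrow)$ I would substitute $\chi_n(h)=\dt(h,1_G)d_n$ into this formula and use $\zt^{1_G}(g)=1$ to obtain $\Tr(P_n^g)=d_n/|G|$ for every $g$; being constant, this shows the $P_n^g$ are mutually equivalent. For the converse, I would first invoke Proposition \ref{C-Map}, where $\sum_{g\in G}P_n^g=1$ is established, so that $\sum_{g\in G}\Tr(P_n^g)=\Tr(1)=d_n$. If the $|G|$ traces all agree, each must equal $d_n/|G|$, that is, $\sum_{h\in G}\zt^h(g)\chi_n(h)=d_n$ for every $g\in G$.

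It then remains to recover $\chi_n$ from this relation by Fourier inversion: multiply both sides by $\overline{\zt^{h_0}(g)}$ and sum over $g$. Using $\overline{\zt^{h_0}(g)}=\zt^{h_0^{-1}}(g)$ and $\zt^h(g)\zt^{h_0^{-1}}(g)=\zt^{hh_0^{-1}}(g)$, together with the orthogonality relation $\sum_{g\in G}\zt^k(g)=\dt(k,1_G)|G|$ from the remark following the bihomomorphism lemma (valid in either slot by symmetry of $\zt$), the double sum collapses to $|G|\chi_n(h_0)$ on the left and to $d_n\,\dt(h_0,1_G)|G|$ on the right, yielding $\chi_n(h_0)=\dt(h_0,1_G)d_n$ as desired. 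The argument is essentially mechanical once the reduction to traces is made; the only place demanding care is the bookkeeping in the inversion step, in particular the consistent use of $\overline{\zt^g(h)}=\zt^{g^{-1}}(h)$ and the character orthogonality relation, and I do not expect a genuine obstacle beyond this.
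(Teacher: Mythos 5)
Your proof is correct and takes essentially the same approach as the paper: reduce Murray--von Neumann equivalence in the matrix algebra $\B(H_n)$ to equality of the traces $\Tr(P_n^g)=|G|^{-1}\sum_{h\in G}\zt^h(g)\chi_n(h)$, and then recover $\chi_n$ by Fourier inversion using $\sum_{g\in G}\zt^k(g)=\dt(k,1_G)|G|$. The only difference is cosmetic---the paper performs the inversion at the operator level, writing $\pi_n(h)$ as a combination of the $P_n^g$ (Remark \ref{C-MapR}) and then applying $\Tr$, whereas you invert the scalar trace identity directly, and your bookkeeping in fact avoids the stray factors of $|G|$ that appear as typos in the paper's computation.
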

\begin{proof}
If $\chi_n(g)=\dt(g,1_{G})|G|$ for any $g\in G$, then
\begin{equation*}
\Tr(P_n^g)=|G|^{-1}\sum_{h\in G}\zt^h(g)\chi_{n}(h)=|G|^{-1}\zt^{1_|G|}(g)d_n=\frac{d_n}{|G|},\quad \forall g\in G
\end{equation*}
Hence these projections are mutually \mvnt. 
In the opposite direction, as we noted in Remark \ref{C-MapR}, we can see that $\pi_n(h)=|G|\sum_{g\in G}\zt^{h^{-1}}(g)P_n^g$. If the the projections are \mvnt, then we should have $\Tr(P_n^g)=\frac{d_n}{|G|}$, for all $g\in G$. Hence:
\begin{equation*}
\chi_n(h)=|G|\sum_{g\in G}\zt^{h^{-1}}(g)\frac{d_n}{|G|}=\dt(h^{-1},1_{G})d_n=\dt(h,1_{G})d_n.
\end{equation*}
\end{proof}
The above lemma, together with Proposition \ref{SRP1}, proves that the strict Rokhlin property part of Theorem \ref{main}, in the special case where $G$ is a finite cyclic group. \newline

Now let's turn to the tracial Rokhlin property case. We still assume that the group $G$ is finite cyclic and $\af\colon G\mapsto\Aut(A)$ be a product-type action on a UHF algebra $A$. If $\af$ has the tracial Rokhlin property, then the dual action will act trivially on the trace space of the crossed product. (See Proposition 2.5 of \cite{Ph-C}) This suggest us to study the trace space of the crossed product.\newline

The tracial states in $T(C^{\ast}(G,A,\af))$ are in one-to-one correspondence with sequences of compatible traces $\{(\tau_n)_{n=1}^{\infty}|\, \tau_n\in T(\oplus A_n^g), \tau_n=\tau_{n+1}\circ\phi_{n,n+1}\}$. Let $\tau_n^g$ denote the unique tracial state on $A_n^g$. When the C*-algebra is specified, we use $\tau$ to denote $\tau_n^g$, as it will cause no confusion. It's easy to see that $T(\oplus A_n^g)$ can be parametrized by a standard simplex
\begin{equation*}
\Dt^{|G|}=\{(s^g)_{g\in G}\mid\, \sum_{g\in G}s^g=1,\,\,s_g\geq 0,\forall g\in G \},
\end{equation*} 
where $(s^g)_{g\in G}$ is mapped to $\sum_{g\in G}s^g\tau_n^g$.\newline
Let $\tau_{s_n}$ be the tracical state determined by $s_n=(s_n^g)_{g\in G}$. Computation shows that the compatibility condition is equivalent to that 
\begin{equation*}
s_n^g=\sum_{h\in G}{s_{n+1}^h\tau(P_{n+1}^{hg^{-1}})}. 
\end{equation*}
Where $P_n^g$ are the projections defined as in Proposition \ref{C-Map}. Define $T_{n+1}$ to be the $|G|\times|G|$ matrix whose (h,g)-entry is $\tau(P_{n+1}^{hg^{-1}})$. The compatibility condition can be rewritten as $s_n=T_{n+1}s_{n+1}$. This lead us to study the infinite product $\prod_{n}{T_n}$. \newline
Let $P_{n,m}^g$ be the projections defined in Proposition \ref{C-Map}. We have the following identity:
\begin{equation*}
P_{n,n+1}^g=\sum_{l\in G}{P_n^{gl^{-1}}\otimes P_{n+1}^l}
\end{equation*}
 Let $P_n$ be the $|G|\times|G|$ matrix whose (g,h)-entry is $P_n^{gh^{-1}}$, then the above equations can be rewritten in matrix form: $P_{n,n+1}=P_nP_{n+1}$, where the multiplication of the entries is given by tensor product. Since $\tau(a\otimes b)=\tau(a)\tau(b)$, We see that $T_{n,n+1}=T_nT_{n+1}$. So telescoping corresponds to "Adding bracket in the infinite product". As a priori, $\prod_{n}T_n$ may not be convergent. But we have the following:
\begin{lem}\label{T-Matrix}
Let $T_n$ be the matrix $(\tau(P_n^{gh^{-1}}))_{g,h\in G}$, where $P_n^g$ is defined as in Proposition \ref{C-Map}. Then there exists a telescoping, such that for any $m\in \N$, the infinite product $\prod_{n=m}^{\infty}{T_n}$ converges. The conclusion holds no matter $\af$ has the tracial Rokhlin property or not. 
\end{lem}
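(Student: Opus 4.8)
The plan is to use the abelian structure of $G$ to diagonalize all the $T_n$ simultaneously by a single, $n$-independent change of basis, reducing convergence of the matrix product to convergence of finitely many scalar infinite products, one for each character of $G$. First I would note that each $T_n$ is a group-circulant: its $(g,h)$-entry $\tau(P_n^{gh^{-1}})$ depends only on $gh^{-1}$. Consequently the vectors $v^{(j)}=(\zt^j(g))_{g\in G}$, for $j\in G$, are common eigenvectors of every $T_n$; these are precisely the columns of the character matrix $(\zt^j(g))$ shown above to be unitary after normalization, and being distinct characters they form a basis. Using $\Tr(P_n^k)=|G|^{-1}\sum_h\zt^h(k)\chi_n(h)$ together with the orthogonality relation $\sum_k\zt^{hj^{-1}}(k)=\dt(h,j)|G|$ from the Remark, a short computation gives $T_nv^{(j)}=\tfrac{1}{d_n}\chi_n(j)\,v^{(j)}$. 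Writing $W$ for this fixed unitary, we obtain $W^{-1}T_nW=\diag\big(\tfrac{1}{d_n}\chi_n(j)\big)_{j\in G}$ for every $n$. Since telescoping multiplies consecutive $T_n$, each telescoped block matrix $T_{n_i,n_{i+1}-1}=T_{n_i}\cdots T_{n_{i+1}-1}$ is conjugate by the same $W$ to a diagonal matrix, and a partial product of the telescoped matrices is conjugate to $\diag\big(\prod_{n=n_m}^{n_{M+1}-1}\tfrac{1}{d_n}\chi_n(j)\big)_j$. Thus the matrix product converges if and only if each scalar product over $j\in G$ converges.

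Next I would record that each factor $c_{n,j}:=\tfrac{1}{d_n}\chi_n(j)$ satisfies $|c_{n,j}|\leq 1$, since $\chi_n(j)=\Tr\pi_n(j)$ is a sum of $d_n$ unit-modulus eigenvalues of the unitary $\pi_n(j)$. Hence every partial product $\prod_{n=a}^{N}c_{n,j}$ lies in the compact closed unit disk $\overline{\mathbb{D}}$; note that this bound is unconditional, which is exactly why the conclusion will hold regardless of whether $\af$ has the tracial Rokhlin property. The problem now becomes purely scalar: to choose block boundaries $1=n_1<n_2<\cdots$ so that, for every $j\in G$ and every starting block $m$, the product $\prod_{n=n_m}^{n_{M+1}-1}c_{n,j}$ converges as $M\to\infty$.

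I would build the telescope by a diagonal (Bolzano--Weierstrass) extraction. Enumerate the countable index set $\{(j,a):j\in G,\ a\in\N\}$, and using compactness of $\overline{\mathbb{D}}$ extract nested subsequences of $\N$ so that along the resulting diagonal subsequence $(N_k)$ the partial product $\prod_{n=a}^{N_k}c_{n,j}$ converges for every pair $(j,a)$ simultaneously. Setting $n_1=1$ and $n_{i+1}=N_i+1$ defines the telescope. Then for any $m$ the block-partial product equals $\prod_{n=n_m}^{n_{M+1}-1}c_{n,j}=\prod_{n=N_{m-1}+1}^{N_M}c_{n,j}$, which has fixed lower limit $a=N_{m-1}+1\in\N$ and upper limit running through $(N_M)$, hence converges as $M\to\infty$ by construction. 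Translating back through $W$ shows that the product $\prod_{n=m}^{\infty}T_n$ of the telescoped matrices converges for every $m$.

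The step I expect to be the main obstacle, and the reason a plain Bolzano--Weierstrass applied only to $\prod_{n=1}^{N}c_{n,j}$ does not suffice, is handling all starting points $m$ at once. A factor $c_{n,j}$ may vanish, namely when $\chi_n(j)=0$, so a shifted product generally cannot be recovered as a ratio of two unshifted partial products. Securing convergence simultaneously for every lower limit $a$, rather than only for $a=1$, is precisely what forces the diagonal argument over the pairs $(j,a)$; since $G$ is finite there are only finitely many $j$, so the indexing set is genuinely countable and the extraction goes through.
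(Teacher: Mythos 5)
Your proposal is correct and follows essentially the same route as the paper: simultaneous diagonalization of the circulant matrices $T_n$ by the character matrix, the unconditional bound $|\chi_n(j)|/d_n\leq 1$ placing all partial products in the closed unit disk, and a compactness/diagonal extraction (the paper phrases it as Bolzano--Weierstrass plus a Cantor diagonal argument with induction on $m$, you organize it as one diagonal extraction over the pairs $(j,a)$) to produce a single telescoping valid for every starting index. Your explicit eigenvalue computation $T_n v^{(j)}=\frac{1}{d_n}\chi_n(j)v^{(j)}$ agrees with the paper's $\lambda_n^g=\sum_{h\in G}\zt^g(h)\tau(P_n^h)$, and your observation about vanishing factors correctly identifies why all lower limits must be handled simultaneously.
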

\begin{proof}
We first observe that the matrices $T_n$ are circular matrices, they can be simultaneously diagonalized: Let $X$ be the unitary matrix $\frac{1}{\sqrt{|G|}}(\zt^h(g))_{h,g\in G}$. Then 
\begin{equation*}
XT_nX^{\ast}=diag(\lambda_n^{g_1},\lambda_n^{g_2},\dots), \quad \text{where}\,\, \lambda_n^g=\sum_{h\in G}{\zt^g(h)\tau(P_n^h)}.
\end{equation*}
 Hence the convergence of the infinite products of matrices is the same thing as the convergence of infinite product of the corresponding eigenvalues: $\prod_{n}{\lambda_n^g}$. Fix a $g$ in $G$. We can estimate that:
\begin{equation*} 
|\lambda_n^g|\leq \sum_{h\in G}{|\zt^g(h)|\tau(P_n^h)}=\tau(\sum_{h\in G}P_n^h)=\tau(I_n)=1.
\end{equation*}
Hence for any $m\in \N$, the partial products $S^l=\prod_{n=m}^{l}{\lambda}_n^g$ is bounded by 1 for any $l>m$. We can therefore select a sub-sequence so that it converges. But choosing a sub-sequence of the partial product corresponds exactly to a telescoping. We conclude that for any $g\in G$ and any $m>0$, the exist a telescoping, such that the infinite product $\prod_{n\geq m}\lambda_n^g$ converges. Since the composition of telescoping is again a telescoping, we can first use a Cantor's diagonal argument, then use induction on $m$, to find a single telescoping, such that for any $g\in G$ and any $m>0$, the infinite products $\prod_{n=m}^{\infty}\lambda_n^g$ converge. 
\end{proof}

Now we are ready to state the necessary and sufficient conditions for tracial Rokhlin property:
\begin{prp}\label{TRP1}
Let $\af\colon G\rightarrow \otimes_{n=1}^{\infty}A_n$ be a product-type action on a UHF-algebra $A$, where $G$ is finite cyclic. Adopt notations in Lemma \ref{T-Matrix}. The action $\af$ has the tracial Rokhlin property if and only if there exists a telescoping, such that for any $m\in \N$, the limit matrix $\prod_{n\geq m}T_n$ exists and has rank 1.
\end{prp}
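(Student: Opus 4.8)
The plan is to translate the tracial Rokhlin property into a statement about the dual action on the trace simplex and to recognise that the rank-one condition is precisely the triviality of that dual action. First I would record three consequences of the circulant structure of Lemma \ref{T-Matrix}: simultaneously diagonalising the $T_n$ by $X=\frac{1}{\sqrt{|G|}}(\zt^h(g))_{h,g}$ produces eigenvalues $\lambda_n^g=\sum_{h\in G}\zt^g(h)\tau(P_n^h)$ with $\lambda_n^{1_{G}}=1$ and $|\lambda_n^g|\leq 1$ for all $g$. Since the $g=1_{G}$ eigenvalue of any product equals $1$, the limit $\prod_{n\geq m}T_n$ has rank one \ifo\ $\prod_{n\geq m}\lambda_n^g=0$ for every $g\neq 1_{G}$; writing $T_{m,l}=\prod_{n=m}^{l}T_n=(\tau(P_{m,l}^{gh^{-1}}))_{g,h}$, this is in turn equivalent to $\tau(P_{m,l}^g)\to|G|^{-1}$ as $l\to\infty$ for each $g$. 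I would also check, from Proposition \ref{DualAction}, that in the coordinates $\hat{s}=Xs$ the dual action $\hat{\af}_k$ multiplies the $l$-th coordinate by $\zt^l(k)$, so a trace of $C^*(G,A,\af)$, namely a compatible sequence $(s_n)$ with $s_n=T_{n+1}s_{n+1}$, is fixed by $\hat{\af}$ \ifo\ each $s_n$ is the uniform barycentre.

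For the forward implication, suppose $\af$ has the tracial Rokhlin property. By Proposition 2.5 of \cite{Ph-C} the dual action is trivial on $T(C^*(G,A,\af))$. Passing to the telescope supplied by Lemma \ref{T-Matrix} so that all tail products converge, triviality forces every compatible sequence $(s_n)$ to be uniform. If some $\prod_{n\geq m}\lambda_n^{g_0}$ were nonzero for a $g_0\neq 1_{G}$, its partial products would be bounded away from $0$, and I could prescribe the $g_0$- and $g_0^{-1}$-Fourier coordinates to manufacture a non-uniform compatible sequence, a small perturbation of the barycentre that still lies in each simplex, contradicting triviality. Hence all nontrivial tail products vanish and the limit matrices have rank one.

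For the converse I would argue constructively. Given $F$, a positive $x$ with $\|x\|=1$, and $\ep>0$, approximate $x$ by $x'\in A_N$ and, using $\tau(P_{m,l}^g)\to|G|^{-1}$, pick a telescope block $\otimes_{i=m}^{l}\B(H_i)$ with $m$ beyond the coordinates of $F$ and $x'$ and with every $\tau(P_{m,l}^g)>|G|^{-1}-\dt$. The projections $P_{m,l}^g$ are invariant (Remark \ref{C-MapR}); since $A$ is UHF it has Property (SP) and comparison of projections by trace, so I choose mutually \mvnt\ subprojections $\tilde P^g\leq P_{m,l}^g$ of equal trace, partial isometries $W^{g,h}$ among them, and set $Q^k=\frac{1}{|G|}\sum_{g,h}\zt^k(g^{-1}h)W^{g,h}$ as in Proposition \ref{SRP1}. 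Because $\tilde P^g\leq P_{m,l}^g$ and the $P_{m,l}^g$ are invariant, the block unitary $\otimes_{i=m}^{l}\pi_i(g)$ acts on $\tilde P^h$ and on $W^{g,h}$ by the same scalars $\zt^{g^{-1}}(h)$ as in the strict case, so the computation of Proposition \ref{SRP1} yields $\af_g(Q^k)=Q^{gk}$ \emph{exactly}; and since the $Q^k$ live in the block, disjoint from the coordinates of $F$, they commute with $F$ exactly. Taking $e_g=Q^g$ and $e=\sum_g e_g=\sum_g\tilde P^g$, the defect $1-e=\sum_g(P_{m,l}^g-\tilde P^g)$ satisfies $\tau(1-e)<|G|\dt$; choosing a nonzero projection $r\in\overline{xAx}$ by Property (SP) and then $\dt$ with $|G|\dt<\tau(r)$, trace comparison in the UHF algebra gives $1-e\pa r$, and $\Her(r)\subset\overline{xAx}$ then gives $1-e\pa x$. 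For the norm condition, since $N<m$ the element $x'$ and $e$ act on disjoint tensor legs, so they commute and $\|ex'e\|=\|x'e\|=\|x'\|$ (the block part $\sum_g\tilde P^g$ of $e$ being a norm-one projection); hence $\|exe\|\geq\|x'\|-\|x-x'\|>1-\ep$ after the usual bookkeeping.

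The step I expect to be the main obstacle is this converse, and within it the simultaneous control of the three Rokhlin conditions: one must push the block far enough out that $F$ and $x$ are respectively commuted with and captured by the Rokhlin projections, while \emph{also} making the trace defect $\tau(1-e)$ small enough to be dominated by a projection of $\overline{xAx}$. The observation that keeps these requirements from conflicting is that invariance of the $P_{m,l}^g$ forces $\af_g(Q^k)=Q^{gk}$ exactly even though the subprojections $\tilde P^g$ are not individually invariant, so the approximate-intertwining and commutation errors are genuinely zero and only the comparison and norm estimates remain to be tuned.
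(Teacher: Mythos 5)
Your proposal is correct and takes essentially the same route as the paper: the circulant diagonalization of the $T_n$ with $\lambda_n^{1_G}=1$, triviality of the dual action on the trace simplex (Proposition 2.5 of \cite{Ph-C}) for the tracial-Rokhlin-implies-rank-one direction, and the $W^{g,h}$/$Q^k$ construction of Proposition \ref{SRP1} applied to equivalent subprojections of the $P_{m,l}^g$ for the converse. The only divergences are cosmetic: the paper gets rank one by pushing the whole simplex through $T_{m,\infty}$ and noting its image is a point rather than perturbing the barycentre in Fourier coordinates, and it concludes the converse via the trace-defect formulation for UHF algebras (Proposition \ref{TRP-UHF}) where you instead verify the comparison and norm conditions of Definition \ref{TRP} directly.
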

\begin{proof}
In one direction, suppose $\prod_{n\geq m}T_n$ has rank 1, for any $m\in \N$. Let $F\subset \otimes_n A_n$ be a finite subset. \Wolog\ we assume that $F\subset \otimes_{n=1}^k A_n$ for some $k\in \N$. Let $\ep>0$ be given. Let $m=k+1$.\newline
 For a circular matrix, it has rank 1 if and only if all the entries are equal. Let E be the matrix such that all the entries are equal to $\frac{1}{|G|}$. Then there exists some $l>m$, such that $\| \prod_{n=m}^lT_n-E\|_{\mathrm{max}}<\ep/|G|$. The discussion before Lemma \ref{T-Matrix} shows that $\prod_{n=m}^lT_n=T_{m,l}$, where the (g,h)-th entry of $T_{m,l}$ is $\tau(P_{m,l}^{gh^{-1}})$. Without loss of generality, we assume that $\tau(P_{m,l}^1)$ is smallest among all the entries of $T_{m,l}$, where $1=1_G$ is the identity of $G$. \newline
Now lets do a similar construction as in proposition \eqref{SRP1}. Since $P_{m,l}^{1}$ has smallest trace among the others, we can, for each $g\in G$, find a partial isometry $W^{1,g}$ such that $W^{1,g}W^{g,1}=P_{m,l}^1$ and $W^{g,1}W^{1,g}$ is a sub-projection of $P_{m,l}^g$. Here we adopt the same notation as in Proposition \ref{SRP1}: $W^{g,1}$ is the conjugate of $W^{1,g}$ and $W^{g,h}=W^{g,1}W^{1,h}$.  Let $Q^k=\sum_{g,h\in G}\zt^k(g^{-1}h)W^{g,h}$. By the same computation, we have:
\begin{enumerate}
\item
$\{Q^k\}_{k\in G}$ are mutually orthogonal projections.
\item 
$\af_g(Q^k)=Q^{gk}$.
\item
Let $Q=\sum_{k\in G}{Q^k}$, $Q=\sum_{g\in G}W^{g,g}$.
\end{enumerate}
Now 
\begin{equation*}
\tau(P_{n,l}^1)\geq \frac{1}{|G|}-\|T-E\|_{\mathrm{max}}>\frac{1-\ep}{|G|}.
\end{equation*}
Since $W^{g,g}$ is a subprojection of $P_{n,l}^g$ which is equivalent to $P_{n,l}^{1}$, we have $\tau(Q)=\sum_{g\in G}\tau(W^{g,g})>1-\ep$, or equivalently $\tau(1-Q)<\ep$
Hence $\{1\otimes Q^k\}$ are tracial Rokhlin projections which commutes with $F$. \newline

For the other direction, assume that the action has the tracial Rokhlin property. Then the dual action is tracially approximately representable and hence induce trivial action on the trace space $T(C^{\ast}(G,B,\af))$. Since  $\prod_{n=m}^{\infty}{T_n}$ converges for all $m>0$, we let $T_{m,\infty}$ denote the limits. Fix a vector $s\in \Dt^{|G|}$, let $s_m=T_{m,\infty}s$, for each $m>0$. Since $T_{m,\infty}=T_mT_{m+1,\infty}$, we see that $\{s_m\}$ form a sequence of compatible traces and hence defines a trace on the crossed product $C^{\ast}(G,B,\af)$. Let $e=(1,0,\dots,0)\in \oplus_{g\in G}A_n^g$, and suppose $s_m=(s_m^g)_{g\in G}$. Now the dual action acts trivially on the trace space means $s_m(\hat{\af}_k(e))=s_m(e)$, for each $k\in G$. This implies $s_m^k=s_m^1$, for all k. Hence the image of $\Dt^{|G|}$ under the transformation $T_{m,\infty}$ is a single point. But the simplex $\Dt^{|G|}$ generates the whole domain of $T_{m,\infty}$, we see that $T_{m,\infty}$ has rank 1, for any $m>0$.
\end{proof}

Just as we did for strict Rokhlin property, we can reformulate Proposition \ref{TRP1} in terms of the characters of the unitary representations, using the following lemma:

\begin{lem}
Suppose $\af\colon G\mapsto \Aut(A)$ be a product-type action, where $G$ is finite cyclic and $A$ is UHF. Let $H_n$ and $\pi_n$ be defined as in Definition \ref{ptact}. Set $d_n$ to be the dimension of $H_n$. Let $\chi_n=\Tr\circ\pi_n$ be the character of $\pi_n$, and let $T_n$ be the circular matrix defined as in Lemma \ref{T-Matrix}. Let $\chi\colon G\mapsto \C$ be the characteristic function on $\{1_G\}$. Then for any $l\in\N$, $T_{l,\infty}$ has rank 1 \ifo\ $\prod_{l\geq m}\frac{\chi_n}{d_n}=\chi$.
\end{lem}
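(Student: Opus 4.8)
The plan is to diagonalize everything simultaneously and read off the eigenvalues of $T_n$ explicitly in terms of the characters $\chi_n$. By Lemma \ref{T-Matrix}, each $T_n$ is a circular matrix diagonalized by the fixed unitary $X=\frac{1}{\sqrt{|G|}}(\zt^h(g))_{h,g}$, with eigenvalues $\lambda_n^g=\sum_{h\in G}\zt^g(h)\tau(P_n^h)$. Since $X$ does not depend on $n$, the limit $T_{l,\infty}$ is again circular and diagonalized by $X$, with eigenvalues $\Lambda^g=\prod_{n\geq l}\lambda_n^g$. Because conjugation by the unitary $X$ preserves rank, the rank of $T_{l,\infty}$ equals the number of indices $g\in G$ for which $\Lambda^g\neq 0$.

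First I would compute $\lambda_n^g$. Expanding $P_n^h=|G|^{-1}\sum_{k\in G}\zt^k(h)\pi_n(k)$ and using that the tracial state restricted to the factor $\B(H_n)$ is the normalized trace, so $\tau(\pi_n(k))=\chi_n(k)/d_n$, one obtains
\begin{equation*}
\lambda_n^g=|G|^{-1}\sum_{k\in G}\frac{\chi_n(k)}{d_n}\sum_{h\in G}\zt^g(h)\zt^k(h).
\end{equation*}
Since $\zt$ is a commutative bihomomorphism, $\zt^g(h)\zt^k(h)=\zt^{gk}(h)$, and the orthogonality relation $\sum_{h\in G}\zt^{gk}(h)=\dt(gk,1_G)|G|$ collapses the inner sum, leaving $\lambda_n^g=\chi_n(g^{-1})/d_n$. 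Thus the eigenvalues of $T_n$ are exactly the normalized character values, and
\begin{equation*}
\Lambda^g=\prod_{n\geq l}\frac{\chi_n(g^{-1})}{d_n}=\Big(\prod_{n\geq l}\tfrac{\chi_n}{d_n}\Big)(g^{-1}).
\end{equation*}

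The remaining step is pure bookkeeping. Taking $g=1_G$ gives $\chi_n(1_G)=d_n$, so $\Lambda^{1_G}=\prod_{n\geq l}1=1$; hence the constant eigenvector always survives and the rank is always at least one. Consequently $T_{l,\infty}$ has rank $1$ if and only if $\Lambda^g=0$ for every $g\neq 1_G$. Writing $f=\prod_{n\geq l}\chi_n/d_n$ for the pointwise limit function, this says precisely $f(g^{-1})=0$ for all $g\neq 1_G$, i.e.\ $f(h)=0$ for all $h\neq 1_G$ after reindexing by $h=g^{-1}$, while $f(1_G)=1$ automatically. That is exactly the assertion $f=\chi$, so $T_{l,\infty}$ has rank $1$ \ifo\ $\prod_{n\geq l}\frac{\chi_n}{d_n}=\chi$.

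I expect the only genuine obstacle to be the eigenvalue computation, specifically applying the bihomomorphism orthogonality in the correct variable so that it produces $\chi_n(g^{-1})$ rather than $\chi_n(g)$; the resulting $g\mapsto g^{-1}$ reindexing must be tracked but is harmless, since inversion is a bijection of $G$ fixing $1_G$. Convergence of the infinite products is not an issue here, as it is already guaranteed by the telescoping arranged in Lemma \ref{T-Matrix}, so $f$ is a well-defined pointwise limit and the equality $f=\chi$ is meaningful coordinatewise.
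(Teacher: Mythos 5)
Your proof is correct, and it takes the Fourier (eigenvalue) side of an argument that the paper runs on the entry side. The paper's proof reduces rank one to the statement that $T_{1,n}$ converges to the matrix $E$ with all entries $\frac{1}{|G|}$, i.e.\ that $\tau(P_{1,n}^g)\to \frac{1}{|G|}$ for every $g$, and then invokes the same Fourier-inversion computation as in the strict Rokhlin lemma to translate this into $\prod_i \chi_i/d_i \to \chi$; it never computes the eigenvalues of $T_n$ beyond the convergence estimate in Lemma \ref{T-Matrix}. You instead diagonalize everything by the fixed unitary $X$, derive the clean identity $\lambda_n^g=\chi_n(g^{-1})/d_n$, observe that $\Lambda^{1_G}=1$ always survives, and count nonzero limiting eigenvalues. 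Your route buys something genuine: the paper's criterion ``a circular matrix has rank $1$ \ifo\ all the entries are equal'' is false for general circulant matrices --- for instance
\begin{equation*}
\begin{pmatrix} 1 & -1 \\ -1 & 1 \end{pmatrix}
\end{equation*}
is circulant of rank $1$ with unequal entries --- and is only valid here because the $T_n$ (hence $T_{l,\infty}$) are doubly stochastic, having nonnegative entries $\tau(P_n^{gh^{-1}})$ with rows summing to $\tau(1)=1$, a fact the paper uses implicitly without comment. Your eigenvalue count needs no positivity at all: unitary conjugation preserves rank, so rank one is exactly ``one nonzero $\Lambda^g$,'' and since $\Lambda^{1_G}=1$ this pins down the trivial eigenvector automatically. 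The two small points you flag are handled correctly: the orthogonality $\sum_h \zt^{gk}(h)=\dt(gk,1_G)|G|$ collapses the sum at $k=g^{-1}$ (using symmetry of $\zt$), and the reindexing $h=g^{-1}$ is harmless since inversion fixes $1_G$; convergence of the eigenvalue products follows from convergence of $T_{l,m}$ because conjugation by the fixed $X$ is continuous, matching the telescoping hypothesis of Lemma \ref{T-Matrix}.
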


\begin{proof}
\Wolog\ we may assume $l=1$. Recall that
 \begin{equation*}
P_{n,m}^g=|G|^{-1}\sum_{h\in G}\zt^h(g)\otimes_{i=n}^{m}\pi_{i}(h).
\end{equation*}
 Since $\Tr=d_i\tau$, we have:
 \begin{equation}\label{trace}
\tau(P_{1,m}^g)=|G|^{-1}\sum_{h\in G}\zt^h(g)\prod_{i=1}^{m}\frac{\chi_{i}(h)}{d_i}
\end{equation}

Since $T_{1,n}$ are circular matrices, $T_{1,\infty}$ has rank 1 \ifo\ $\lim\limits_{n\rightarrow \infty}T_{1,n}$ tends to $E$, where $E$ is the matrix who entries are all equal to $\frac{1}{|G|}$. This is further equivalent to $\lim\limits_{n\rightarrow \infty} \tau(P_{1,n}^g)=\frac{1}{|G|}$, for all $g\in G$. In Equation \eqref{trace}, if we take the limit,  the same computation as in the strict Rokhlin property case shows that  $\lim\limits_{n\rightarrow \infty} \tau(P_{1,n}^g)=\frac{1}{|G|}$ is equivalent to $\lim\limits_{n\rightarrow \infty} \prod_{i=1}^{n}\frac{\chi_i}{d_i}=\chi$.  
\end{proof}

So far we have proved Theorem \ref{main} for finite cyclic groups. For general finite groups, we can actually reduce the problem to the finite cyclic case, based on the following two observations:
\begin{lem}
Let $\af\colon G\mapsto \Aut(A)$ be an action, where $G$ is finite and $A$ is a unital simple C*-algebra. Let $H$ be a subgroup of $G$. If $\af$ has the strict Rokhlin property, then the induced action $\af|_{H}$ also has the strict Rokhlin property. If $\af$ has the tracial Rokhlin property, then $\af|_{H}$ also has the tracial Rokhlin property.
\end{lem}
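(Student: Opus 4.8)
The plan is to build the $H$-Rokhlin projections by ``folding'' the $G$-Rokhlin projections along the cosets of $H$. Concretely, fix a complete set $R \subseteq G$ of representatives for the right cosets, so that $G = \bigsqcup_{r \in R} Hr$ with $|R| = [G:H]$. Given $\ep > 0$ and a finite set $F \subseteq A$, I would apply the Rokhlin property of the full action $\af$ with the same $F$, the same positive element $x$ (in the tracial case), and with $\ep_0 = \ep/[G:H]$ in place of $\ep$, to obtain projections $\{e_g\}_{g\in G}$. For each $h \in H$ I would then set
\[
f_h = \sum_{r\in R} e_{hr}.
\]
Since the $e_g$ are mutually orthogonal, each $f_h$ is a projection, and the index sets $S_h = \{hr : r\in R\}$ are pairwise disjoint, so the $f_h$ are mutually orthogonal as well.

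The key combinatorial point is that $\{S_h\}_{h\in H}$ partitions $G$ and is equivariant: every $g\in G$ lies in $Hr$ for a unique $r\in R$ and hence equals $hr$ for a unique $h\in H$, so $G = \bigsqcup_{h\in H} S_h$, and moreover $h S_{h'} = S_{hh'}$ for all $h,h'\in H$. Using $\af_h(e_{h'r}) \approx e_{hh'r}$ this gives the covariance estimate
\[
\|\af_h(f_{h'}) - f_{hh'}\| \le \sum_{r\in R}\|\af_h(e_{h'r}) - e_{hh'r}\| < [G:H]\,\ep_0 = \ep,
\]
and likewise $\|f_h a - a f_h\| \le \sum_{r\in R}\|e_{hr}a - a e_{hr}\| < \ep$ for $a \in F$. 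In the strict case $\sum_{h\in H} f_h = \sum_{g\in G} e_g = 1$, so $\{f_h\}_{h\in H}$ are strict Rokhlin projections for $\af|_H$.

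For the tracial case the decisive observation is that the folding is exhaustive: $f := \sum_{h\in H} f_h = \sum_{g\in G} e_g = e$, the full sum from the $G$-Rokhlin family. Consequently the comparison condition $1 - f = 1 - e \pa x$ and the norm condition $\|fxf\| = \|exe\| > 1 - \ep_0 \ge 1 - \ep$ are inherited verbatim from the corresponding conditions for $\af$, with no further work. Together with the covariance and commutation estimates above, this shows $\{f_h\}_{h\in H}$ are tracial Rokhlin projections for $\af|_H$.

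I do not expect a serious obstacle here: the entire content is the coset bookkeeping. The one place to be careful is the choice of a \emph{right} transversal, which is exactly what makes $h\mapsto S_h = hR$ a partition satisfying $hS_{h'} = S_{hh'}$ (a left transversal would break the equivariance), together with the observation that $f = e$, so that the comparison and norm conditions require no re-verification in the tracial case.
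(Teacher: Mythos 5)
Your proof is correct, and it is essentially the same argument as the paper's: the paper simply cites Lemma 5.6 of \cite{EPW}, whose proof is exactly this folding of the $G$-Rokhlin projections along a right transversal, $f_h=\sum_{r\in R}e_{hr}$, with the comparison and norm conditions inherited because $\sum_{h\in H}f_h=\sum_{g\in G}e_g$. Your attention to the right-versus-left transversal issue, which makes $hS_{h'}=S_{hh'}$ hold, is exactly the one point of care in this bookkeeping.
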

\begin{proof}
See Lemma 5.6 of \cite{EPW}. The proof of the strict Rokhlin property case is basically the same.
\end{proof}
\begin{lem}
Let $\pi G\mapsto B(H)$ be a finite dimensional representation. Let $H$ be a subgroup of $G$, $\pi$ will induce a representation $\pi|_H$ on $H$. If $\chi$ is
the character of $\pi$ and $\chi_H$ is the character of $\pi|_H$, we have $\chi_H=\chi |_H$.
\end{lem}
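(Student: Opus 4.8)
The plan is to recognize that this statement carries no representation-theoretic content beyond the definitions themselves, so the entire proof consists of unwinding what ``character'' and ``restriction'' mean. Before starting I would flag the notation clash: the symbol $H$ is used both for the representation space (in $\B(H)$) and for the subgroup of $G$. To keep the argument readable I would rename the subgroup, say $K \leq G$, so that $\pi\colon G \to \B(H)$ is the given representation and $\pi|_K$ denotes its restriction, i.e.\ the composite of $\pi$ with the inclusion $K \hookrightarrow G$. I would also remark that ``induce'' in the statement must be read as ``restrict''; it is \emph{not} the Frobenius induction from $K$ to $G$, whose character obeys the (nontrivial) Frobenius formula rather than the simple identity claimed here.

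The single key step is then immediate. By definition, the character of any finite-dimensional representation $\rho$ assigns to a group element $\gamma$ the scalar $\Tr(\rho(\gamma))$. Restriction to $K$ does not alter the operators that $\pi$ assigns; it only shrinks the domain of the homomorphism. Hence $\pi|_K(k) = \pi(k)$ as operators on $H$ for every $k \in K$. Taking traces gives $\chi_K(k) = \Tr(\pi|_K(k)) = \Tr(\pi(k)) = \chi(k) = (\chi|_K)(k)$ for all $k \in K$, which is exactly the asserted equality $\chi_K = \chi|_K$.

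There is no genuine obstacle: the statement is a definitional tautology, and the ``main difficulty'' is purely expository, namely making sure the reader does not mistake the restriction $\pi|_K$ for an induced representation. Accordingly I would keep the written proof to a single sentence invoking the definition of the character and the fact that $\pi|_K(k)=\pi(k)$.
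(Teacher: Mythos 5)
Your proof is correct, and it matches the paper's treatment: the paper states this lemma without proof precisely because, as you observe, it is a definitional tautology ($\pi|_K(k)=\pi(k)$, so taking traces gives $\chi_K=\chi|_K$). Your remarks on the $H$ notation clash and on reading ``induce'' as ``restrict'' rather than Frobenius induction are apt clarifications of the paper's statement, not deviations from its argument.
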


\begin{dfn}\label{Model}
(Model action) Let $r=(r_i)_{i=1}^{\infty}$ and $s=(s_i)_{i=1}^{\infty}$ be two infinite sequences of non-negative integers. Let $\pi_i$ be some arbitrary representation on $\mathbb{C}^{s_i}$, and we write $\pi=(\pi_i)_{i=1}^{\infty}$.
Set
\begin{equation*}
 H_i=\underbrace{l^2(G)\oplus l^2(G)\cdots \oplus l^2(G)}_{r_i}\oplus \mathbb{C}^{s_i}.
\end{equation*}
Let $\tilde{\pi_i}\colon G\mapsto B(H_i)$ be the direct sum of left regular representations on each copy of $l^2(G)$ and $\pi_i$ on $\mathbb{C}^{s_i}$. As in definition \ref{ptact}, we get a product-type action $\af(r,s,\pi)$ induced by the representations $\tilde{\pi_i}$. We call $\af(r,s,\pi)$ the {\emph{model action}} for the triple $(r,s,\pi)$. If $s=0$, we write $\af(r)=\af(r,0,0)$.
\end{dfn}
Now let's turn to the proof of Theorem \ref{main} for general finite groups.\newline

Let $\af\colon G\mapsto \Aut(A)$ be a product-type action with the strict Rokhlin property. Write $G$ as a finite union of cyclic subgroups $G=K_0\cup K_1\cup\cdots\cup K_s$. Since $\af|_{K_0}$ has the strict Rokhlin property, by the cyclic group version of Theorem \ref{main}, there exists a telescope such that $\frac{\chi_n|_{K_0}}{d_n}=\chi|_{K_0}$ for any $n\in \N$. We can find a successive telescope such that $\chi_m|_{K_1}=\chi|_{K_1}$ for any $m\in \N$. It's easy to see that we still have $\frac{\chi_m|_{K_0}}{d_m}=\chi|_{K_0}$ after the telescope. Since the composition of telescopes is again a telescope, repeating the above procedure will give us a telescope, such that $\chi_n|_{K_i}=\chi|_{K_i}$ for any $1\leq i\leq s$. Since $G=K_0\cup K_1\cup\cdots\cup K_s$, we see that  $\chi_n=\chi$, for any $n\in \N$.\newline
Conversely, if there exists a telescope, such that $\frac{\chi_n}{d_n}=\chi$ for any $n\in \N$. Let $\io_1,\dots,\io_k$ be the irreducible characters of $G$ with dimensions $r_1,\dots,r_k$ respectively. Let $\chi_n=\sum_{i=1}^{k}a_i\io_i$ be the irreducible decomposition, then $a_i=<\chi_n,\io_i>=\frac{d_nr_i}{|G|}$. Since each $a_i$ is an integer and at lease one $r_i=1$, we see that $\frac{d_n}{|G|}$ is an integer. Hence $\pi_n$ is equal to the character of the direct sum of $\frac{d_n}{|G|}$ copies of left regular representations. Therefore $\pi_n$ is equivalent to a direct sum of left regular representations. Recall that two actions $\af\colon G\mapsto \Aut(A)$ and $\bt\colon G\mapsto \Aut(B)$ are said to be $\emph{conjugate}$ \ifo\ there exists an isomorphism $T\colon A\mapsto B$ such that $T\circ\af_g=\bt_g\circ T$, for any $g\in G$. Let $r=(\frac{d_n}{|G|})_{n>0}$, we see that $\af$ is conjugate to the model action $\af(r)$ (Definition \ref{Model}). Hence $\af$ has the strict Rokhlin property.\newline

The proof of the tracial Rokhlin property case is quite similar, we shall only prove one direction: if there exists a telescoping, such that for any $n\in \N$, $\prod_{i=n}^{\infty}\frac{\chi_n}{d_n}=\chi$, then the action has the tracial Rokhlin property. The following lemma, as a special case of Lemma 5.2 of \cite{EPW}, simplifies our argument for product-type actions.
\begin{prp}\label{TRP-UHF}
Let $\af\colon G\mapsto \Aut(A)$ be a finite group action on a UHF-algebra $A$, let $\ta$ be the unique trace on $A$. Then $\af$ has the tracial Rokhlin property \ifo\ for any finite set $F\subset A$, any $\ep>0$, there exists \mops\ $e_g$ in $A$ for $g\in G$, such that:
\begin{enumerate}[(1)]
\item
$\|e_g a-a e_g\|<\ep$, $\forall g\in G$ and $a\in F$.
\item
$\|\af_g(e_h)-e_{gh}\|<\ep.$
\item
$\tau(1-e)<\ep$
\end{enumerate}
\end{prp}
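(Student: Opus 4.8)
The plan is to prove the two implications separately. The forward implication is a one-line reduction, and essentially all the work sits in the backward implication, where the comparison condition and the norm condition of Definition \ref{TRP} must be reconstructed from the single trace estimate $\ta(1-e)<\ep$.

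\textbf{Forward direction.} Suppose $\af$ has the tracial Rokhlin property and let $F$ and $\ep>0$ be given. Since $A$ is UHF its trace takes a dense set of values on projections, so I can choose a nonzero projection $p_0\in A$ with $0<\ta(p_0)<\ep$. Applying the tracial Rokhlin property to $F$, $\ep$ and the positive norm-one element $x=p_0$ produces projections $e_g$ satisfying (1) and (2) of Definition \ref{TRP} (which are exactly conditions (2) and (1) of the present statement) together with the comparison condition $1-e\pa p_0$. Because $1-e$ and $p_0$ are projections, $1-e\pa p_0$ means $1-e$ is Murray--von Neumann subequivalent to $p_0$ (Proposition 3.5.3(ii)), whence $\ta(1-e)\le\ta(p_0)<\ep$. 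This is condition (3), so the simplified conditions hold.

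\textbf{Backward direction, setup and comparison.} Assume the three simplified conditions hold for every $F,\ep$, and fix $F$, $\ep>0$ and a positive $x\in A$ with $\|x\|=1$; I must produce Rokhlin projections satisfying (1)--(4) of Definition \ref{TRP}. Fix $\ep_0<\ep/4$. Since UHF algebras have Property (SP), Lemma \ref{SP} supplies a nonzero projection $p\in\Her(x)$ such that every positive $q\le p$ of norm one satisfies $\|qx^{1/2}-q\|\le\ep_0$ and $\|x^{1/2}q-q\|\le\ep_0$; taking $q=p$ gives $\|pxp-p\|\le 2\ep_0$. Note $\ta(p)>0$ as $\ta$ is faithful, and $p\pa x$ since $p\in\Her(x)$. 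Now apply the simplified conditions to the enlarged set $F\cup\{p\}$ with a parameter $\dt$ chosen much smaller than $\ep$, $\ep_0$ and $\ta(p)$, obtaining mutually orthogonal projections $e_g$ with $e=\sum_g e_g$ for which the Rokhlin and commutation relations hold for $F$, $\|ep-pe\|<\dt$, and $\ta(1-e)<\dt$. For the comparison condition, $1-e$ and $p$ are projections in a UHF algebra and $\ta(1-e)<\dt<\ta(p)$, so $1-e$ is Murray--von Neumann subequivalent to $p$, i.e.\ $1-e\pa p$ (Proposition 3.5.3(ii)); transitivity of $\pa$ together with $p\pa x$ gives $1-e\pa x$, which is condition (3).

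\textbf{Backward direction, norm condition.} This is the crux. From $\|ep-pe\|<\dt$ a direct estimate shows $epe$ is within $O(\dt)$ of a projection, while $\ta(epe)=\ta(pe)\ge\ta(p)-\ta(1-e)>0$. Letting $q_0$ be the spectral projection of $epe$ for $(1/2,\infty)$ yields a nonzero projection $q_0\in eAe$, so $q_0\le e$ exactly, with $\|q_0-pe\|=O(\dt)$ and hence $\|pq_0-q_0\|=O(\dt)$. Since $q_0\le e$ one has $\|exe\|\ge\|q_0xq_0\|$, and using $q_0\approx pq_0$ together with $\|pxp-p\|\le 2\ep_0$ one estimates $\|q_0xq_0-q_0\|\le 2\ep_0+O(\dt)$, so that $\|exe\|\ge\|q_0xq_0\|\ge 1-2\ep_0-O(\dt)$. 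Choosing $\ep_0<\ep/4$ and then $\dt$ small enough makes this exceed $1-\ep$, giving condition (4), and the Rokhlin and commutation relations are condition (1),(2) directly. The main obstacle is precisely this norm estimate: one must extract from the almost-commuting pair $e,p$ and the trace bound a genuine nonzero projection $q_0\le e$ that is at the same time almost dominated by $p$, and then propagate the three ordered small parameters $\ep_0$, $\dt$ (and the induced $O(\dt)$ defect of $q_0$) cleanly through $\|exe\|\ge\|q_0xq_0\|\approx\|q_0\|=1$; the comparison condition, by contrast, collapses to a comparison of traces of projections once Lemma \ref{SP} has produced $p\in\Her(x)$.
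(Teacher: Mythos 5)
Your proof is correct, and it is genuinely more self-contained than what the paper offers: the paper gives no argument at all for Proposition \ref{TRP-UHF}, simply invoking Lemma 5.2 of \cite{EPW}, of which the statement is a special case (EPW's lemma gives the trace criterion, with $\ta(1-e)<\ep$ taken over all tracial states, in a more general class of simple C*-algebras; for a UHF algebra with its unique trace it reduces to exactly the statement here). Your blind reconstruction runs on the same underlying mechanism that makes the cited lemma work --- in a UHF algebra Murray--von Neumann comparison of projections is decided by the trace, so the comparison condition $1-e\pa x$ collapses to $\ta(1-e)<\ta(p)$ for a small projection $p\in\Her(x)$ furnished by Lemma \ref{SP} --- but you carry it out directly. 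The step where you add real content is the norm condition: as you correctly identify, the trace bound alone does not make $\|pep\|$ close to $1$ (it only bounds $\ta(pep)$ from below, which says nothing useful when $\ta(p)$ is small), and your fix --- perturbing the almost-idempotent $epe$ (note $\|(epe)^2-epe\|=\|ep(ep-pe)e\|$ is small) to a genuine nonzero projection $q_0\le e$ with $\|pq_0-q_0\|=O(\dt)$, then estimating $\|exe\|\ge\|q_0xq_0\|\ge 1-2\ep_0-O(\dt)$ --- is sound, and is the same kind of estimate the paper itself uses in its inductive-limit proposition for the tracial Rokhlin property. Two cosmetic points: summing the commutator estimates over $g\in G$ gives $\|ep-pe\|<|G|\dt$ rather than $\dt$ (harmless, since you let $\dt$ depend on all the data), and the transitivity of $\pa$, which you use silently, should be flagged (it follows from the characterization in Propositions 4.3 and 4.6 of \cite{ORT}). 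What the paper's citation buys is brevity and generality; what your route buys is a complete elementary argument using only facts already quoted in the paper.
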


Now let $F$ be a finite subset of $A$, \wolog\ assume that $F\subset A_{n-1}$, for some $n>0$. Let $\ep>0$ be given. Set $\ep_0=\frac{\ep}{2|G|}$. Since $\prod_{i=n}^{\infty}\frac{\chi_i}{d_i}=\chi$, we can find some $m>n$ such that $\|\prod_{i=n}^{m}\frac{\chi_i}{d_i}-\chi\|_{\max}<\ep_0$. Let $\chi_{n,m}=\prod_{i=n}^{m}\chi_i$, we can see that $\chi_{n,m}$ is the character of the representation $\pi_{n,m}=\otimes_{i=n}^{m}\pi_i$, with dimension $d_{n,m}=\prod_{i=n}^{m}d_i$. Let $M=2|G|/\ep$, increasing $m$ if necessary, we may further require that $d_{n,m}>M$.\newline
In the following, we are going to show that $\pi_{n,m}$ is 'close' to a direct sum of left regular representations. Let $\io_1,\dots,\io_k$ be the irreducible characters of $G$ with dimension $r_1,\dots,r_k$ respectively. Then the max norm of each $\io_i$ will be less or equal to $|G|$. From now on, for characters, $\|\cdot\|$ will always denote the max norm. Let $\chi_{n,m}=\sum_{1\leq i\leq k}a_i\io_i$ be the irreducible decomposition of $\chi_{n,m}$. Since $\|\frac{\chi_{j,k}}{d_{j,k}}-\chi\|<\ep$, We can see that for any $i$,
\begin{align}
 \left|\frac{a_i}{d_{n,m}}-\frac{r_i}{|G|}\right|&=<\frac{\chi_{n,m}}{d_{n,m}}-\chi,\io_i>\\\label{est-d}
&<|G|\left\|\frac{\chi_{n,m}}{d_{n,m}}-\chi\right\|\|\io_i\|<|G|^2\ep_0 
\end{align}
Let $d=\min_{1\leq i\leq k}\{\bigl[\frac{a_i}{r_i}\bigr]\}$. We can then decompose $\chi_{n,m}$ as the sum of two characters $\chi'$ and $\chi''$, where $\chi'=\sum_{i}(dr_i)\io_i$, and $\chi''=\chi_n,m-\chi'$. Let $\pi'$ be the direct sum $d$ copies of left regular representations which corresponds to $\chi'$, and let $\pi''$ be a representation corresponds to $\chi''$. Let $d'$ and $d''$ be the dimensions $\pi'$ and $\pi''$ respectively. Our claim is that $\frac{d''}{d'+d''}=\frac{d''}{d_{n,m}}<\ep$. \\
Note that $d'=\sum_{1\leq i\leq k}dr_i^2=d|G|$. By the definition of $d$, there exists some $i$ such that $\|d-a_i/r_i\|<1$. Using equation (\ref{est-d}), we can estimate:
\begin{align*}
\left|\frac{d''}{d_{n,m}}\right|&=\left\|1-\frac{d|G|}{d_{n,m}}\right\|\\
&\leq \left\|1-\frac{\frac{a_i}{r_i}|G|}{d_n,m}\right\|+\left\|\frac{(d-\frac{a_i}{r_i})|G|}{d_n,m}\right\|\\
&<\frac{r_i}{|G|}|G|^2\ep_0+\frac{|G|}{M}\\
&<\ep/2+\ep/2=\ep.
\end{align*}
Let's consider the representation $\tilde{\pi}=\pi'\oplus\pi''$, Let $\tilde{\af}$ be the inner action defined by $g\mapsto \Ad(\tilde{\pi}(g))$. Since $\tilde{\pi}$ contains copies of regular representation whose total dimension is $d'$, we could find \mops\ $e_g$ for $g\in G$ such that $\tilde{\af}_h(e_g)=e_{hg}$, and $\Tr(e)=d'$, where $e=\sum_{g\in G}e_g$. But $\pi_{n,m}$ is equivalent to $\tilde{\pi}$ because they have the same character, therefore the induced actions $\af_{n,m}$ and $\tilde{\af}$ are conjugate. Hence we can find projections for $\af_{n,m}$ satisfy the same properties. Note that $\Tr(e)=d'$ implies $\tau(1-e)=1-d'/d_{n,m}<\ep$. Hence they are tracial Rokhlin projections. By Lemma \ref{TRP-UHF}, $\af$ has the tracial Rokhlin property.\newline

From the above proof, we can also get the following characterization of the Rokhlin properties:
\begin{cor}
Let $\af\colon G\mapsto \Aut(A)$ be a product-type action where $A$ is UHF. Then:
\begin{enumerate}[(i)]
\item
$\af$ has strict Rokhlin property \ifo\ there exists some $r=(r_i)_{1\leq i<\infty}$, such that $\af$ is conjugate to the model action $\af(r)$ (See Definition \ref{Model}).
\item
$\af$ has the tracial Rokhlin property \ifo\ there exists some $r=(r_i)$, $s=(s_i)$ and $\pi=(\pi_i)$ with $\lim\limits_{i\rightarrow \infty}\frac{s_i}{r_i}=0$, such that $af$ is conjugate to the model action $\af(r,s,\pi)$  
\end{enumerate}
\end{cor}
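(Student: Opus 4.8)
The plan is to deduce the corollary directly from Theorem \ref{main} together with two elementary facts: that the strict and tracial Rokhlin properties are invariant under conjugacy (one simply transports the Rokhlin projections through the intertwining isomorphism $T$), and that the left regular representation of $G$ on $l^2(G)$ has character $|G|\chi$, since left translation by $g$ has $|G|$ fixed points when $g=1_G$ and none otherwise. Thus for the model action $\af(r,s,\pi)$ of Definition \ref{Model}, where the $i$-th representation is $\tilde{\pi}_i = r_i\cdot(\mathrm{reg})\oplus\pi_i$, its character is $\tilde\chi_i = r_i|G|\chi + \chi_{\pi_i}$ and its dimension is $d_i = r_i|G| + s_i$. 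Everything reduces to matching these data against the character criteria of Theorem \ref{main}.

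For part (i): if $\af$ is conjugate to $\af(r)=\af(r,0,0)$ then $\tilde\chi_i = r_i|G|\chi$ and $d_i = r_i|G|$, so $\frac{1}{d_i}\tilde\chi_i = \chi$ for every $i$ with no telescope needed; Theorem \ref{main}(i) gives the strict Rokhlin property for $\af(r)$, and conjugacy-invariance transfers it to $\af$. Conversely, if $\af$ has the strict Rokhlin property then by Theorem \ref{main}(i) there is a telescope with $\frac{1}{d_n}\chi_n = \chi$ for all $n$; as already observed inside the proof of that theorem, this equality forces $\pi_n$ to be equivalent to a direct sum of $r_n := d_n/|G|$ copies of the regular representation, so the telescoped action---which is literally $\af$ re-expressed---is conjugate to $\af(r)$.

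For the ``if'' direction of part (ii), suppose $\af$ is conjugate to $\af(r,s,\pi)$ with $s_i/r_i\to 0$. For $g\ne 1_G$ one has $\tilde\chi_i(g)=\chi_{\pi_i}(g)$ and $|\chi_{\pi_i}(g)|\le s_i$, whence $\bigl|\frac{1}{d_i}\tilde\chi_i(g)\bigr|\le \frac{s_i}{r_i|G|+s_i}\le \frac{1}{|G|}\cdot\frac{s_i}{r_i}\to 0$; since almost all factors have modulus below $\tfrac12$, the partial products $\prod_{n\le i\le N}\frac{1}{d_i}\tilde\chi_i(g)$ tend to $0=\chi(g)$, while at $g=1_G$ every factor equals $1$. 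Hence $\prod_{n\le i<\infty}\frac{1}{d_i}\tilde\chi_i=\chi$ for every $n$ with no telescope needed, and Theorem \ref{main}(ii) together with conjugacy-invariance yields the tracial Rokhlin property.

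The real work is the ``only if'' direction of (ii), which I expect to be the main obstacle: I must produce a \emph{single} telescope in which the per-block deficiency ratio $s_j/r_j$ tends to $0$. Starting from the telescope supplied by Theorem \ref{main}(ii), so that $\prod_{n\le i<\infty}\frac{1}{d_i}\chi_i=\chi$ for all $n$, I would refine it by choosing cut points $m_1<m_2<\cdots$ inductively so that the finite block character $\chi^{(j)}=\prod_{i=m_j}^{m_{j+1}-1}\chi_i$, of dimension $D_j=\prod_{i=m_j}^{m_{j+1}-1}d_i$, satisfies both $\|\frac{1}{D_j}\chi^{(j)}-\chi\|_{\max}<\tfrac1j$ and $D_j>j|G|$; this is possible because the partial products of $\prod_{i\ge m_j}\frac{1}{d_i}\chi_i$ converge to $\chi$ and $D_j$ grows without bound. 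In the resulting telescope I decompose each block representation as $\tilde\pi^{(j)}\cong r_j\cdot(\mathrm{reg})\oplus\pi_j''$, taking $r_j=\min_i\lfloor a_i^{(j)}/r_i\rfloor$ over the multiplicities $a_i^{(j)}$ of the irreducible characters $\io_i$ in $\chi^{(j)}$, and setting $s_j=\dim\pi_j''$. The estimate already carried out in the proof of Theorem \ref{main}---converting $\|\frac{1}{D_j}\chi^{(j)}-\chi\|<\ep$ with $D_j$ large into $s_j/D_j<\ep$---then shows $s_j/D_j\to 0$, which is equivalent to $s_j/r_j\to 0$ via $\frac{s_j}{r_j}=\frac{|G|\,(s_j/D_j)}{1-s_j/D_j}$. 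Thus the telescoped $\af$, hence $\af$ itself, is conjugate to $\af(r,s,\pi'')$ with $s_j/r_j\to 0$. The delicate point throughout is keeping the two requirements---character-closeness and large block dimension---simultaneously met by one choice of cut points, since only their conjunction controls the deficiency ratio.
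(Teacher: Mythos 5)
Your proposal is correct and takes essentially the same route as the paper, which obtains the corollary directly from the proof of Theorem \ref{main}: the regular-representation character computation, conjugacy invariance, the observation that $\frac{1}{d_n}\chi_n=\chi$ forces $\pi_n\cong \frac{d_n}{|G|}\cdot(\mathrm{reg})$, and the block decomposition $\chi_{n,m}=\chi'+\chi''$ with the estimate $d''/d_{n,m}<\ep$ are all lifted verbatim from that proof, and your inductive choice of cut points enforcing $\|\frac{1}{D_j}\chi^{(j)}-\chi\|_{\max}<\frac{1}{j}$ and $D_j>j|G|$ simultaneously is precisely the (unwritten) refinement the paper gestures at with ``from the above proof.'' The only cosmetic point is that finitely many initial blocks could have $r_j=0$, which you can absorb by merging them into later blocks, since $\lim_{j}s_j/r_j=0$ is an eventual condition.
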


\section{Tracial Rokhlin property for non-simple C*-algebras}
In this section, we are going to give an alternative definition of tracial Rokhlin property for non-simple C*-algebras. Although the original definition of tracial Rokhlin property makes sense for non-simple C*-algebras, it may be too strong to be distinctive from the strict Rokhlin property, as we can see from the following example:
\begin{exa}\label{wTRP-exa}
Let $\af\colon G\mapsto \Aut(A)$ and $\bt\colon G\mapsto \Aut(B)$ be two actions of the same group $G$. Let $\pi\colon G\mapsto \Aut(A\oplus B)$ be the direct sum, i.e.\, $\pi(a,b)=(\af(a),\bt(b)),$ for any $a\in G$ and $b\in B$. Then $\pi$ has the tracial Rokhlin property, if and only if both $\af$ and $\bt$ have the strict Rokhlin property. In other words, $\pi$ has the tracial Rokhlin property \ifo\ it has the strict Rokhlin property.
\end{exa} 
\begin{proof}
Suppose $\pi$ has the tracial Rokhlin property. Let $F$ be a finite subset of $A$ and let $\ep>0$. Choose any positive element $b$ in $B$ with norm 1. Let $F'=\{(a,0)\mid a\in F\}$ and let $x=(0,b)$. Since $\pi$ has the tracial Rokhlin property, there are \mops\ $e_g$ in $A\oplus B$, for $g\in G$ such that
\begin{enumerate}[(1)]
\item
$\|\af_g(e_h)-e_{gh}\|<\ep$.
\item
$\|e_g(a,0)-(a,0)e_g\|<\ep, \forall a\in F$.
\item
With $e=\sum_{g\in G}e_g$, $1-e\pa x=(0,b)$.
\end{enumerate} 
Let $e_g=(p_g,q_g)$. Then we can see that the projections $p_g\in A$ for $g$ in $G$ are \mops\ satisfy:
\begin{enumerate}[(1')]
\item
$\|\af_g(p_h)-e_{gh}\|\leq \|\af_g(e_h)-e_{gh}\|<\ep$.
\item
$\|p_ga-ap_g\|\leq \|e_g(a,0)-(a,0)e_g\|<\ep, \forall a\in F$.
\item
Let $p=\sum_{g\in G}p_g$, and $q=\sum_{g\in G}q_g$, Then $1-e=(1-p,1-q)\pa(0,b)$, hence $1-p\pa 0$, which forces $1-p=0$, or $p=1$.
\end{enumerate}
Hence $\af$ has strict Rokhlin property. The same argument shows that $\bt$ also has the strict Rokhlin property. It's not hard to see that $\pi=\af\oplus \bt$ has the strict Rokhlin property \ifo\ both $\af$ and $\bt$ has the strict Rokhlin property.
\end{proof}

An element $a$ in a \ca\ is said to be {\emph{full}} if the closed ideal generated by $a$ is the whole \ca. Inspired by the above observation, we give the following alternative definition of tracial Rokhlin property:
\begin{dfn}\label{wTRP}
Let $A$ be an infinite dimensional unital \ca, and let $\af\colon G \rightarrow \Aut(A)$ be an action of a finite group $G$ on $A$. We say that $\af$  has the {\emph{weak tracial Rokhlin property}} if for every finite set $F\subset A$, every $\ep>0$, every positive element $b\in A$ with norm 1 and every {\emph{full}} positive element $x\in A$, there are mutually orthogonal projections $e_g\in A$ for $g\in G$  such that:
\begin{enumerate}[(1)]
\item
$\|\af_g(e_h)-e_{gh}\|<\ep$ for all $g, h\in G$.
\item
$\|e_g a-ae_g\|<\ep$ for all $g\in G$ and all $a\in F$
\item
Let $e=\sum_{g\in G}e_g$, we have $1-e\pa x$ 
\item
$\|ebe\|>1-\ep$
\end{enumerate}
\end{dfn}

By the same perturbation argument as in Lemma 1.17 of \cite{Ph-F}, we could have a formally stronger version of weak tracial Rokhlin property, by requiring that the defect projection be $\af$-invariant:
\begin{lem}\label{wTRP-inv}
Let $A$ be an infinite dimensional unital \ca, and let $\af\colon G \rightarrow \Aut(A)$ be an action of a finite group $G$ on $A$. Then $\af$  has the {\emph{weak tracial Rokhlin property}} \ifo\ for every finite set $F\subset A$, every $\ep>0$, every positive element $b\in A$ with norm 1 and every {\emph{full}} positive element $x\in A$, there are mutually orthogonal projections $e_g\in A$ for $g\in G$  such that:
\begin{enumerate}[(1)]
\item
$\|\af_g(e_h)-e_{gh}\|<\ep$ for all $g, h\in G$.
\item
$\|e_g a-ae_g\|<\ep$ for all $g\in G$ and all $a\in F$.
\item
Let $e=\sum_{g\in G}e_g$, $e$ is $\af-$invariant.
\item
$1-e\pa x$.
\item
$\|ebe\|>1-\ep$.
\end{enumerate}
\end{lem}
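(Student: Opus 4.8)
The reverse implication is immediate, since conditions (1)--(5) here are formally stronger than (1)--(4) of Definition \ref{wTRP} (the invariance of $e$ is an extra requirement, while condition (4) here is exactly condition (3) there). So the plan is to prove that the weak tracial Rokhlin property already forces the apparently stronger version, by a perturbation argument in the spirit of Lemma 1.17 of \cite{Ph-F}.

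Given $F$, $\ep$, $b$, and a full positive element $x$, I would first fix a small tolerance $\ep_0>0$ (to be pinned down at the end) with $|G|\ep_0<1/2$, and apply the weak tracial Rokhlin property with $\ep_0$ in place of $\ep$, keeping the same $F$, $b$, $x$, to obtain mutually orthogonal projections $e_g^{(0)}$. Writing $e^{(0)}=\sum_g e_g^{(0)}$ and re-indexing $\sum_h e_{gh}^{(0)}=e^{(0)}$, condition (1) gives
\[
\|\af_g(e^{(0)})-e^{(0)}\|=\Bigl\|\sum_{h\in G}\bigl(\af_g(e_h^{(0)})-e_{gh}^{(0)}\bigr)\Bigr\|<|G|\,\ep_0
\]
for every $g$, so $e^{(0)}$ is almost invariant. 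Next I would average: set $a=\tfrac{1}{|G|}\sum_{g\in G}\af_g(e^{(0)})$, a self-adjoint invariant element (each $\af_g(e^{(0)})$ is again a projection) with $\|a-e^{(0)}\|<|G|\ep_0<1/2$. Hence $\spec(a)$ lies in $[0,|G|\ep_0]\cup[1-|G|\ep_0,1]$, and applying a continuous function $h$ that is $0$ on $[0,|G|\ep_0]$ and $1$ on $[1-|G|\ep_0,1]$ produces a projection $f=h(a)$ which is invariant, because $a$ is invariant and each $\af_g$ commutes with continuous functional calculus, and which satisfies $\|f-e^{(0)}\|=O(\ep_0)$.

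Since $f$ and $e^{(0)}$ are projections at distance $<1$, the standard construction $u=v(v^*v)^{-1/2}$ with $v=f e^{(0)}+(1-f)(1-e^{(0)})$ yields a unitary with $\|u-1\|=O(\ep_0)$ and $u e^{(0)}u^*=f$. I would then define $e_g=u e_g^{(0)}u^*$. These are mutually orthogonal projections with $\sum_g e_g=u e^{(0)}u^*=f$, which is invariant, establishing condition (3). Because $\|u-1\|$ and hence $\|\af_g(u)-1\|=\|u-1\|$ are $O(\ep_0)$, conjugation perturbs each of the remaining quantities by only $O(\ep_0)$: from $\af_g(e_h)=\af_g(u)\af_g(e_h^{(0)})\af_g(u)^*$ and $e_{gh}=u e_{gh}^{(0)}u^*$ one recovers (1) with a slightly larger constant; the commutator estimate $\|e_g a-a e_g\|\le 2\|a\|\,\|e_g-e_g^{(0)}\|+\ep_0$ with $\|e_g-e_g^{(0)}\|\le 2\|u-1\|$ gives (2) with error controlled by $\max_{a\in F}\|a\|$; for the norm condition (5), $\|ebe\|\ge\|e^{(0)}be^{(0)}\|-2\|u-1\|$ since $\|b\|=1$; and for the comparison (4), $1-f=u(1-e^{(0)})u^*$ is Murray--von Neumann equivalent to $1-e^{(0)}$, so $1-f\sim_s 1-e^{(0)}\pa x$ and therefore $1-f\pa x$ by transitivity of Blackadar's comparison.

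Choosing $\ep_0$ small enough at the outset drives every one of these errors below $\ep$, which completes the argument. The hard part is not conceptual but the constant-chasing: the only genuinely delicate points are the functional-calculus estimate bounding $\|f-e^{(0)}\|$ in terms of $\|a-e^{(0)}\|$, and checking that the norm condition survives conjugation, where the middle factor that actually appears is $u^*bu$ rather than $b$ and so must be absorbed into the $O(\ep_0)$ error using $\|b\|=1$.
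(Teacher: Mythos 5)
Your argument is correct and is essentially identical to the paper's, which gives no written proof but simply invokes ``the same perturbation argument as in Lemma 1.17 of \cite{Ph-F}'': averaging $e^{(0)}$ over the group, cutting with continuous functional calculus to produce a nearby invariant projection $f$, and conjugating the $e_g^{(0)}$ by a unitary close to $1$ is exactly that argument. Your closing observations (the middle factor $u^*bu$ in the norm condition, and passing $1-f\sim_s 1-e^{(0)}\pa x$ through transitivity of Blackadar's comparison) correctly identify and resolve the only delicate points.
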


The weak tracial Rokhlin property coincides with the original tracial Rokhlin property in the simple \ca\ case:
\begin{prp}
Let $A$ be an infinite dimensional simple unital \ca, and let $\af\colon G\rightarrow \Aut(A)$ be a finite group action. Then $\af$ has the tracial Rokhlin property \ifo\ it has the weak tracial Rokhlin property.
\end{prp}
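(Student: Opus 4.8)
The plan is to prove the two directions separately, with the forward direction (tracial Rokhlin property implies weak tracial Rokhlin property) being essentially immediate and the reverse direction being where the simplicity hypothesis does the work. For the forward direction, suppose $\af$ has the tracial Rokhlin property in the sense of Definition \ref{TRP}. Given the data $F$, $\ep$, a norm-one positive element $b$, and a full positive element $x$ required by Definition \ref{wTRP}, I would simply feed $F$, $\ep$, and the positive element $b$ (normalized to have norm $1$) into the original definition to obtain Rokhlin projections satisfying conditions (1), (2), and the norm condition $\|ebe\| > 1 - \ep$. The only subtlety is that Definition \ref{TRP} produces the comparison $1 - e \pa b$, whereas Definition \ref{wTRP} asks for $1 - e \pa x$. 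This is where simplicity enters: in a simple \ca\ every nonzero positive element is full, so one must relate comparison against $b$ to comparison against the given full element $x$. I would run the original definition not against $b$ but against a carefully chosen norm-one positive element sitting inside $\overline{xAx}$, so that the resulting defect is dominated by $x$ directly.

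More precisely, for the forward direction I would proceed as follows. Since $A$ is simple and $x$ is a nonzero (hence full) positive element, $\overline{xAx}$ is a nonzero \hsa, and I may choose a norm-one positive element $y \in \overline{xAx}$; by construction $y \pa x$ via part (i) of the cited Proposition (if $0 \le a \le b$ then $a \pa b$) after scaling, or more carefully $y \in \Her(x)$ gives $y \pa x$ directly. The tracial Rokhlin property must supply both a comparison against a small element and the norm condition against $b$; since Definition \ref{TRP} uses a single positive element $x$ for both roles, I would apply it with the element $y$ playing the comparison role and then separately arrange the norm condition for $b$. The cleanest route is to invoke Lemma \ref{SP-SRP}: either $\af$ has the strict Rokhlin property, in which case the weak tracial Rokhlin property holds trivially (take $e = 1$, so $1 - e = 0 \pa x$ and $\|ebe\| = \|b\| = 1$), or $A$ has Property (SP), in which case I can use Lemma \ref{SP} to pass between the norm condition and the comparison condition.

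For the reverse direction, assume $\af$ has the weak tracial Rokhlin property and let $F$, $\ep$, and a norm-one positive $x$ be the data of Definition \ref{TRP}. Because $A$ is simple, $x$ is automatically full, so I may apply Definition \ref{wTRP} with this same $x$ serving simultaneously as the element $b$ (for the norm condition) and as the full element (for the comparison condition). This directly yields mutually orthogonal projections $e_g$ satisfying (1), (2), the comparison $1 - e \pa x$, and the norm condition $\|exe\| > 1 - \ep$, which are exactly conditions (1)--(4) of Definition \ref{TRP}. Thus in the simple case the reverse direction is essentially a tautology once one observes that fullness is automatic.

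The main obstacle I anticipate is the bookkeeping in the forward direction around the fact that Definition \ref{TRP} uses one element $x$ to control both the comparison and the norm conditions, while Definition \ref{wTRP} decouples these into $x$ and $b$. The resolution hinges on simplicity together with Lemma \ref{SP-SRP}, which lets me split into the strict Rokhlin case (trivial) and the Property (SP) case. In the Property (SP) case I would first use Lemma \ref{SP} applied to $b$ to produce a projection $p \in \overline{b A b}$ absorbing the norm condition, then apply the tracial Rokhlin property with comparison element chosen inside $\overline{x A x}$, and finally combine the two via the same commutator and norm estimates that appear in the proof of the inductive limit proposition earlier in the excerpt. The estimates are routine; the only care needed is choosing the tolerances so that the final norm condition $\|ebe\| > 1 - \ep$ and comparison $1 - e \pa x$ hold simultaneously.
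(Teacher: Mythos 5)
Your reverse direction (weak tracial Rokhlin $\Rightarrow$ tracial Rokhlin, taking $b=x$ and the full element $=x$, using that every nonzero positive element of a simple \ca\ is full) and your dichotomy via Lemma \ref{SP-SRP} are exactly the paper's. The genuine gap is in the substantive direction. Your plan is to apply Definition \ref{TRP} to a norm-one positive $y\in\overline{xAx}$ (so that $1-e\pa y\pa x$) and then ``separately arrange'' the norm condition for $b$; but the norm condition produced by Definition \ref{TRP} is $\|eye\|>1-\dt$, tied to the \emph{same} element $y$ used for comparison, and this gives no lower bound whatsoever on $\|ebe\|$. Putting an absorbing projection $p\in\Her(b)$ from Lemma \ref{SP} into the finite set only yields near-commutation $\|e_gp-pe_g\|<\dt$; it does not force $\|epe\|$ close to $1$. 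If $\|epe\|$ were small, near-commutation would merely produce a nonzero projection $p_0$ under (a small perturbation of) $p$ with $p_0\pa 1-e\pa y$, which is no contradiction: $p$ has subprojections of arbitrarily small size in general (already in a UHF algebra), so no choice of $y\in\Her(x)$ can rule this out. In short, the norm condition in Definition \ref{TRP} cannot be decoupled from its comparison element by commutator estimates alone, and your ``combine the two via the same estimates'' step has no mechanism behind it.

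The missing idea --- the paper's key step --- is to manufacture a \emph{single} projection that does both jobs. By Lemma \ref{SP} applied to $b$, choose a nonzero projection $q\in\Her(b)$ whose subprojections nearly absorb $b$; then, using simplicity via Lemma 3.5.6 of \cite{Lin-C}, choose a nonzero projection $p\leq q$ with $p\pa x$. Now apply the tracial Rokhlin property with $F'=F\cup\{p\}$, tolerance $\dt=\ep/(2|G|+3)$, and $p$ itself as the comparison element: the comparison condition becomes $1-e\pa p\pa x$, and the norm condition $\|epe\|>1-\dt$ transfers to $\|ebe\|>1-\ep$ using $\|pb^{1/2}-p\|<\dt$ (from $p\leq q$) together with $\|ep-pe\|<|G|\dt$ (from $p\in F'$). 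With this one bridging lemma inserted, the rest of your outline goes through, including the strict Rokhlin case of the dichotomy, where $e=1$ gives $1-e=0\pa x$ and $\|ebe\|=\|b\|=1$. A minor presentational point: your opening sentence declares the direction TRP $\Rightarrow$ wTRP ``essentially immediate,'' which contradicts your own subsequent (correct) identification of it as the direction needing all the work.
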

\begin{proof}
It's trivial that weak tracial Rokhlin property implies tracial Rokhlin property, since every non-zero element in a simple \ca\ is full. So let's prove the other direction. We may assume that $A$ has Property (SP), otherwise $\af$ has the strict Rokhlin property and therefore tracial Rokhlin property. Let $F$ be a finite subset of $A$, $\ep>0$ be a positive number, $b\in A_+$ has norm 1, and $x\in A_+$ is non-zero. Let $\dt=\frac{\ep}{(2|G|+3)}$. By Lemma ~\ref{SP}, there exists a non-zero projection $q\in \Her(b)$, such that for any projection $r\leq q$, we have $\|rb-r\|<\dt$. By Lemma 3.5.6 of ~\cite{Lin-C}, we can find a non-zero projection $p\leq q$, such that $p\pa x$. Let $F'=F\cup \{p\}$, since $\af$ has tracial Rokhlin property, we can \mops\ $e_g\in A$ such that:
\begin{enumerate}[(1)]
\item
$\|\af_g(e_h)-e_{gh}\|<\dt<\ep$ for all $g, h\in G$.
\item
$\|e_g a-ae_g\|<\dt<\ep$ for all $g\in G$ and all $a\in F'$
\item
Let $e=\sum_{g\in G}e_g$, we have $1-e\pa p$ 
\item
$\|epe\|>1-\dt$
\end{enumerate}
By our choice of $p$, we have: $1-e\pa p\pa x$. So we need only to verify that $\|ebe\|>1-\ep$. For that, we have the following estimation:
\begin{align*}
\|ebe\|&\geq \|pebep\|>\|epbpe\|-\|(ep-pe)bpe\|-\|peb(pe-ep)\|\\
&>\|epe\|-\|e(p-pb^{1/2})pe\|-\|epb^{1/2}(b^{1/2}p-p)e\|-2|G|\dt\\
&>1-\dt-\dt-\dt-2|G|\dt=1-\ep
\end{align*}  
\end{proof}
Looking back to Example \ref{wTRP-exa} we gave at the beginning of this section, we see that the weak tracial Rokhlin property is a better definition, because we have the following:
\begin{prp}
Let $\af\colon G\mapsto \Aut(A)$ and $\bt\colon G\mapsto \Aut(B)$ be two actions of the same group $G$. Let $\pi\colon G\mapsto \Aut(A\oplus B)$ be the direct sum, i.e.\, $\pi(a,b)=(\af(a),\bt(b)),$ for any $a\in G$ and $b\in B$. Then $\pi$ has the weak tracial Rokhlin property, if and only if both $\af$ and $\bt$ have the weak tracial Rokhlin property.
\end{prp}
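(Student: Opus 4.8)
The plan is to exploit the fact that every structure entering Definition~\ref{wTRP}---the norm, positivity, fullness, and Blackadar's comparison $\pa$---splits coordinatewise across $A\oplus B$. First I would record three elementary facts. For $y=(y_A,y_B)\in A\oplus B$ one has $\|y\|=\max(\|y_A\|,\|y_B\|)$, with products and adjoints computed coordinatewise. Since every closed two-sided ideal of $A\oplus B$ splits as $I\oplus J$ for an ideal $I$ of $A$ and an ideal $J$ of $B$, a positive element $x=(x_A,x_B)$ is full in $A\oplus B$ \ifo\ $x_A$ is full in $A$ and $x_B$ is full in $B$. Finally, because $\Her((a_A,a_B))=\Her(a_A)\oplus\Her(a_B)$ and any $z\in A\oplus B$ splits as $(z_A,z_B)$, reading Definition~\ref{B-C} off coordinatewise shows $(a_A,a_B)\pa(b_A,b_B)$ in $A\oplus B$ \ifo\ $a_A\pa b_A$ in $A$ and $a_B\pa b_B$ in $B$ (and likewise for $\sim_s$). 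These three observations do all the work.

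For the converse direction I assume $\pi$ has the weak tracial Rokhlin property and show $\af$ does, the argument for $\bt$ being symmetric. Given a finite $F_A\subset A$, $\ep>0$, a norm-one positive $b_A\in A$, and a full positive $x_A\in A$, I would feed $\pi$ the data $F=\{(a,0):a\in F_A\}$, the same $\ep$, the norm-one positive element $(b_A,0)$, and the full positive element $(x_A,1_B)$ (full because $1_B$ is full in $B$). The resulting Rokhlin projections $e_g=(p_g,q_g)$ have first coordinates $p_g$ that are \mops\ in $A$: conditions (1) and (2) for $\af$ are immediate from the coordinatewise maximum, condition (3) follows since $1-e\pa(x_A,1_B)$ forces $1_A-p\pa x_A$, and the norm condition is in fact exact here, since $e(b_A,0)e=(pb_Ap,0)$ gives $\|ebe\|=\|pb_Ap\|>1-\ep$.

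For the forward direction I assume both $\af$ and $\bt$ have the weak tracial Rokhlin property. Given $F\subset A\oplus B$, $\ep>0$, a norm-one positive $b=(b_A,b_B)$, and a full positive $x=(x_A,x_B)$, I note that $x_A$ and $x_B$ are full and that $\max(\|b_A\|,\|b_B\|)=1$, so \wolog\ $\|b_A\|=1$. I would then apply the weak tracial Rokhlin property of $\af$ to $F_A=\{a_A\}$, $\ep$, $b_A$, $x_A$ to obtain $p_g\in A$, and the weak tracial Rokhlin property of $\bt$ to $F_B=\{a_B\}$, $\ep$, an arbitrary norm-one positive element of $B$, and $x_B$ to obtain $q_g\in B$; here only conditions (1)--(3) on the $B$ side are used, the dummy norm-one element being discarded. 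Setting $e_g=(p_g,q_g)$ yields \mops\ in $A\oplus B$: conditions (1) and (2) hold by the coordinatewise maximum, condition (3) holds because $1_A-p\pa x_A$ and $1_B-q\pa x_B$ combine to $1-e\pa x$, and the norm condition holds since $\|ebe\|\geq\|pb_Ap\|>1-\ep$.

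The only step requiring genuine care is the coordinatewise decomposition of $\pa$ over a direct sum; once that is in hand the rest is bookkeeping. The conceptual point, which explains the contrast with Example~\ref{wTRP-exa}, is that condition (4) is a maximum over the two coordinates, so it need only be realized on the single summand where $\|b\|$ is attained, while the freedom to choose the full test element $x$ on the other summand (for instance $1_B$) removes the rigidity that collapsed the original tracial Rokhlin property onto the strict one for direct sums.
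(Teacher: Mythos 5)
Your proposal is correct and takes essentially the same route as the paper, whose entire stated proof is the single observation that $(x_1,x_2)$ is full in $A\oplus B$ \ifo\ each coordinate is full in its summand; your coordinatewise splitting of the norm and of Blackadar's comparison $\pa$, and your choice of the full test element $(x_A,1_B)$ in the converse direction, are precisely the details the paper leaves implicit by deferring to the computation in Example \ref{wTRP-exa}. No gaps.
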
 
For the proof, we just need to observe that for $a=(a_1, a_2)$ in $A\oplus B$, $a$ is full \ifo\ $a_1$ and $a_2$ are both full in the corresponding C*-algebras.\newline

The original motivation for introducing the tracial Rokhlin property is to deal with crossed products of tracially AF algebras, or C*-algebras of tracial rank 0. In \cite{Lin-TR1} Lin gives the following definition:   
\begin{dfn}
We denote by $\mathscr{I}^{(k)}$ be the class of all unital C*-algebras which are finite direct sum of the form:
\begin{equation*}
 P_1M_{n_1}(C(X_1))P_1\oplus P_1M_{n_2}(C(X_2))P_2\oplus\cdots\oplus P_sM_{n_s}(C(X_s))P_s  
\end{equation*}
where $s<\infty$ and for each $i$, $X_i$ is a $k$-dimensional finite CW complex, $P_i$ is a projection in $M_{n_i}(C(X_i))$
\end{dfn}
These will serve as our building blocks. Lin gave a definition of tracial rank in \cite{Lin-TR1}, where it was called tracial topological rank. Later he showed in \cite{Lin-TR2} that the definition can be slightly simplified, for which we can take as the definition of tracial rank: 
\begin{thm}\label{TR}
(Theorem 2.5,\cite{Lin-TR2}) Let $A$ be a unital \ca. Then the tracial rank of $A$ is less or equal to k, \ifo\ for any finite subset $F\in A$, every $0<\sm_4<\sm_3<\sm_2<\sm_1<1,$ and every positive element $b\in A$ with $\| b\|=1,$, there exist a C*-subalgebra $B\subset A$ with $B\in \mathscr{I}^{(k)}$ and $1_B=p$ such that:
\begin{enumerate}[(1)]
\item
$\|[x,p]\|<\ep$ for all $x\in F$.
\item
$pxp\in_{\ep}B$ for all $x\in F$.
\item
$f^{\sm_1}_{\sm_2}( (1-p)b(1-p) ) \pa f^{\sm_3}_{\sm_4}( (pbp) )$
\end{enumerate}
\end{thm}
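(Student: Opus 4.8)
The plan is to prove that the stated criterion is equivalent to the original definition of tracial rank in \cite{Lin-TR1}, whose comparison clause may be taken in the form ``$1-p\pa a$'' for an arbitrary prescribed nonzero positive element $a\in A$, conditions (1) and (2) being unchanged. The four cut-offs $\sm_4<\sm_3<\sm_2<\sm_1$ are present solely to create spectral gaps that keep Blackadar comparison stable under the fact that the unit $p$ of $B$ only \emph{approximately} commutes with $b$; controlling these gaps against the commutator tolerance is the entire technical content. I would prove the two implications separately.

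Assume first that $A$ has tracial rank at most $k$ in the sense of \cite{Lin-TR1}. Given $F$, the constants $\sm_i$, some $\ep>0$, and $b$ with $\|b\|=1$, I would apply the original definition to the enlarged set $F\cup\{b\}$, with commutator tolerance $\dt$ small relative to each $\sm_i-\sm_{i+1}$, and with the prescribed positive element taken to be a cut-down of $b$ across the inner gap, $a=f^{\sm_3'}_{\sm_4'}(b)$ with $\sm_4<\sm_4'<\sm_3'<\sm_3$ (this $a$ is nonzero since $\|b\|=1>\sm_3'$). This produces $B\in\mathscr{I}^{(k)}$ with unit $p$ satisfying (1), (2), $\|[p,b]\|<\dt$, and $1-p\pa a$. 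Two facts then close the argument. First, $f^{\sm_1}_{\sm_2}((1-p)b(1-p))$ is a positive contraction in the corner $\overline{(1-p)A(1-p)}$, hence bounded above by the corner unit $1-p$, so $f^{\sm_1}_{\sm_2}((1-p)b(1-p))\pa 1-p$ by the comparison property ``$0\le u\le v\Rightarrow u\pa v$''. Second, because $\|[p,b]\|<\dt$ functional calculus is approximately equivariant, and the gap between $(\sm_4',\sm_3')$ and $(\sm_4,\sm_3)$ lets one verify, via the compression-stability lemma described below, that $a=f^{\sm_3'}_{\sm_4'}(b)\pa f^{\sm_3}_{\sm_4}(pbp)$, the gap absorbing both the compression by $p$ and the $O(\dt)$ perturbation. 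Chaining gives $f^{\sm_1}_{\sm_2}((1-p)b(1-p))\pa 1-p\pa a\pa f^{\sm_3}_{\sm_4}(pbp)$, which is condition (3) of the criterion.

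For the converse I would assume the criterion and manufacture $1-p\pa a$ for an arbitrary nonzero positive $a$, normalised so $\|a\|=1$. This is the substantive direction, since one now has access only to the $f$-function comparison and must recover a genuine comparison of the defect against $a$. The naive choice $b=a$ fails: $f^{\sm_1}_{\sm_2}((1-p)a(1-p))$ can be far smaller than $1-p$, so the criterion would compare only a fragment of the defect. The remedy is to feed the criterion a test element $b$ forced to be large on the range of the defect, so that on one side $1-p\pa f^{\sm_1}_{\sm_2}((1-p)b(1-p))$, while on the other side $b$ is built from $a$ so that $f^{\sm_3}_{\sm_4}(pbp)\pa a$. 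Since $p$ is not known before $b$ is chosen, this must be arranged iteratively: a first application of the criterion exposes a candidate defect, $b$ is assembled from $a$ together with a cut-down supported on that defect, and a second application yields $f^{\sm_1}_{\sm_2}((1-p)b(1-p))\pa f^{\sm_3}_{\sm_4}(pbp)$; chaining the three comparisons then gives $1-p\pa a$.

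The main obstacle, shared by both directions, is a single compression-stability lemma: whenever $\|[p,b]\|<\dt$, the spectral gap separating $(\sm_2,\sm_1)$ from $(\sm_4,\sm_3)$ must force $f^{\sm_1}_{\sm_2}((1-p)b(1-p))\pa f^{\sm_3}_{\sm_4}(pbp)$ with every error controlled by $\dt$ rather than by the uncontrolled norm of $1-p$. This is exactly where the cut-off functions are indispensable, and where one shuttles between Murray--von Neumann comparison of the projection $1-p$ (extracting an honest subprojection via Property (SP) when needed) and Cuntz-type comparison of positive elements. I expect nearly all the difficulty to concentrate in this lemma; once it is available, both implications reduce to bookkeeping with the gaps and to the routine approximate equivariance of functional calculus under a near-commuting projection.
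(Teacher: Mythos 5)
First, a point of orientation: the paper does not prove this statement at all --- Theorem \ref{TR} is quoted verbatim from Theorem 2.5 of \cite{Lin-TR2} (Hu--Lin--Xue) as a known characterization, so there is no internal proof to match; your attempt must therefore be judged on its own, and it has a fatal flaw at the very first step. Your plan is to prove the criterion equivalent, for an \emph{arbitrary} unital $A$, to the variant of the definition whose comparison clause reads ``$1-p\pa a$ for any prescribed nonzero positive $a$.'' That equivalence is false in general: it is precisely the content of Theorem 6.13 of \cite{Lin-TR1} (stated in this paper as Theorem \ref{TR-S}) that the simplified clause suffices when $A$ is \emph{simple}, and the distinction is essential. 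For non-simple $A$ the strong clause fails outright: if $A=A_1\oplus A_2$ and one takes $a=(0,a_2)$, then $1-p\pa a$ forces the defect projection to vanish in the first summand --- this is the same mechanism as the paper's Example \ref{wTRP-exa} and the Section 5 remark that the Elliott--Niu comparison condition is too strong for non-simple algebras --- whereas the four-cut-off condition of Theorem \ref{TR} is compatible with finite direct sums. So a correct proof along your proposed route cannot exist; the actual argument in \cite{Lin-TR2} compares the stated criterion with Lin's original Definition 3.1 of \cite{Lin-TR1}, whose comparison clause is itself of cut-off-function type, not of the form $1-p\pa a$.

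Even granting your premise, the ``compression-stability lemma'' on which both directions lean is false as stated. Take $b=1$: then $a=f^{\sm_3'}_{\sm_4'}(b)=1$ while $f^{\sm_3}_{\sm_4}(pbp)=p$, so your claimed comparison $a\pa f^{\sm_3}_{\sm_4}(pbp)$ reads $1\pa p$, which fails in any stably finite algebra whenever $p\neq 1$. The spectral gaps between the $\sm_i$ do control functional-calculus errors under $\|[p,b]\|<\dt$, but they cannot transfer mass across the corner decomposition: since $b\approx pbp+(1-p)b(1-p)$, functions of $b$ compare with the \emph{direct sum} of the two compressions, never with the $p$-corner alone, and since $a$ must be named before $p$ is produced, no choice of $a$ can guarantee $a\pa f^{\sm_3}_{\sm_4}(pbp)$. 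Your converse direction has an additional circularity of the same kind: the ``second application'' of the criterion returns a fresh projection, so the three comparisons you intend to chain involve two different $p$'s and do not compose. For the purposes of this paper nothing needed to be proved here --- the theorem is correctly attributed to \cite{Lin-TR2} --- but if you want to reconstruct its proof you must start from Lin's actual original definition rather than from the simple-case simplification.
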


If the tracial rank of $A$ is less or equal to $k$, we write $TR(A)\leq k$. While the comparison condition (3) in the definition of tracial rank seems rather complicated, it can be greatly simplified when the C*-algebra is simple:

\begin{thm}\label{TR-S}
(Theorem 6.13,\cite{Lin-TR1}) Let $A$ be a simple unital \ca. Then $TR(A)\leq k$ if for any finite subset $F\in A$, any nonzero element $b\in A_+$, there exist a C*-subalgebra $B\subset A$ with $B\in \mathscr{I}^{(k)}$ and $1_B=p$ such that:
\begin{enumerate}[(1)]
\item
$\|[x,p]\|<\ep$ for all $x\in F$.
\item
$pxp\in_{\ep}B$ for all $x\in F$.
\item
$1-p\pa b$.
\end{enumerate}
\end{thm}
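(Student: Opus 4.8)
The plan is to recover the hypotheses of the general characterisation Theorem~\ref{TR} from the simpler comparison assumed here. Conditions~(1) and~(2) of the two statements are word for word identical, so given a finite set $F$, a tolerance $\ep>0$, cut-offs $0<\sm_4<\sm_3<\sm_2<\sm_1<1$ and a positive $b$ with $\|b\|=1$, the entire problem reduces to upgrading the single inequality ``$1-p\pa b$'' available to us into the refined comparison $f^{\sm_1}_{\sm_2}\big((1-p)b(1-p)\big)\pa f^{\sm_3}_{\sm_4}(pbp)$ (functions as in Definition~\ref{Fun1}).

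First I would dispose of the left-hand side once and for all. Since $(1-p)b(1-p)\in\overline{(1-p)A(1-p)}=\Her(1-p)$, the element $f^{\sm_1}_{\sm_2}\big((1-p)b(1-p)\big)$ again lies in $\Her(1-p)$, and any positive element $c$ of that corner is dominated by its unit: $0\le c\le\|c\|(1-p)$ and $\|c\|(1-p)\sim_s 1-p$, whence $f^{\sm_1}_{\sm_2}\big((1-p)b(1-p)\big)\pa 1-p$. By transitivity of $\pa$ it is therefore enough to produce $p$ with $1-p\pa f^{\sm_3}_{\sm_4}(pbp)$.

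The real issue is that the target $f^{\sm_3}_{\sm_4}(pbp)$ depends on the projection $p$ that the hypothesis must first hand us, so the positive element fed into the hypothesis has to be chosen in advance. My plan is to fix intermediate cut-offs $\sm_4<\sm_4''<\sm_4'<\sm_3'<\sm_3$, form the nonzero element $e=f^{\sm_3'}_{\sm_4'}(b)$, and invoke the assumption with $F\cup\{b\}$, a tolerance $\ep'\ll\ep$, and test element $e$, obtaining $B\in\mathscr{I}^{(k)}$ with $1_B=p$, $\|[x,p]\|<\ep'$ on $F\cup\{b\}$, $pxp\in_{\ep'}B$, and $1-p\pa e$. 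Approximate commutation gives $\|b-(b_0+b_1)\|<2\ep'$ with $b_0=pbp$ and $b_1=(1-p)b(1-p)$ sitting in orthogonal corners; a standard (R\o rdam-type) perturbation estimate for $\pa$ then yields $e\pa f^{\sm_3''}_{\sm_4''}(b_0)\oplus f^{\sm_3''}_{\sm_4''}(b_1)$ once $\ep'$ is small compared with the chosen gaps, and support containment of the cut-off functions gives $f^{\sm_3''}_{\sm_4''}(b_0)\pa f^{\sm_3}_{\sm_4}(pbp)$, while $f^{\sm_3''}_{\sm_4''}(b_1)\pa 1-p\pa e$.

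I expect the last bookkeeping step to be the main obstacle. The estimates above only deliver a relation of the shape $e\pa f^{\sm_3}_{\sm_4}(pbp)\oplus e$, which does not of itself force $1-p\pa f^{\sm_3}_{\sm_4}(pbp)$; a cancellation is needed, and this is precisely where simplicity must be spent. I would address it by choosing the test element strictly smaller than $e$: using simplicity (and, when $A$ has Property~(SP), Lemma~\ref{SP2}) to split off a piece $e_0$ several orthogonal copies of which are dominated by $e$, running the hypothesis with $e_0$ rather than $e$, and absorbing the harmless $(1-p)$-block $f^{\sm_3''}_{\sm_4''}(b_1)\pa e_0$ into the surplus copies, so that the resulting $e_0\oplus e_0\pa f^{\sm_3}_{\sm_4}(pbp)\oplus e_0$ can be reduced to $e_0\pa f^{\sm_3}_{\sm_4}(pbp)$ and hence $1-p\pa e_0\pa f^{\sm_3}_{\sm_4}(pbp)$. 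Making this absorption rigorous — controlling $f^{\sm_3}_{\sm_4}(pbp)$ solely through the approximate commutation and the fullness of $f^{\sm_3'}_{\sm_4'}(b)$ in the simple algebra $A$, and carrying out the cancellation — is the heart of the matter and the step I would expect to consume most of the work, with the comparison lemmas for positive elements in simple \ca s (the refinements of the properties of $\pa$ recalled above, from \cite{Lin-C}) as the main tools.
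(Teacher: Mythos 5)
Your preliminary moves are sound and match what one must do: conditions (1) and (2) coincide, the left side of the refined comparison satisfies $f^{\sm_1}_{\sm_2}\bigl((1-p)b(1-p)\bigr)\in\Her(1-p)$, hence is $\pa\,1-p$, and the R\o rdam-type perturbation estimates you invoke do have Blackadar-comparison analogues (Section 3.5 of \cite{Lin-C}), so the reduction to producing $p$ with $1-p\pa f^{\sm_3}_{\sm_4}(pbp)$ is legitimate as a plan. Note, for calibration, that the paper itself contains no proof of this statement --- it is quoted verbatim as Theorem 6.13 of \cite{Lin-TR1} --- so the benchmark is Lin's argument there.

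The decisive step of your proposal, however, is a genuine gap, and it is exactly the one you flag: the inference from $e_0\oplus e_0\pa f^{\sm_3}_{\sm_4}(pbp)\oplus e_0$ to $e_0\pa f^{\sm_3}_{\sm_4}(pbp)$ is a cancellation, and Blackadar (or Cuntz) comparison does not cancel in a general simple unital \ca. Cancellation and comparison of this kind are regularity properties which, in this context, are \emph{consequences} of $TR(A)\leq k$ --- the very conclusion being proved --- so ``spending simplicity'' cannot supply them; nothing in your setup even excludes relations of the form $n\odot e_0\pa e_0$, under which your absorption scheme yields nothing. Relatedly, comparison-smallness of $1-p$ gives no control of the norm $\|(1-p)b(1-p)\|$: the top of the spectrum of $b$ can sit entirely inside $\Her(1-p)$ while $1-p\pa e_0$ holds, in which case $f^{\sm_3}_{\sm_4}(pbp)=0$ and your strengthened target is false for the $p$ the hypothesis hands you; your engineered cut-offs record this obstruction (as you concede) but do not remove it. Two further loose ends: Lemma \ref{SP2} requires Property (SP) and non-elementarity, so the dichotomy of whether $A$ has (SP) must be treated separately (without (SP) one feeds a positive element with projectionless hereditary subalgebra, forcing $1-p=0$, which is the easy case since condition (3) of Theorem \ref{TR} then holds trivially); and splitting a positive element into several orthogonal, mutually equivalent copies needs justification beyond simplicity alone. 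Lin's actual proof spends a substantial part of Section 6 of \cite{Lin-TR1} first extracting comparability and cancellation-type order properties from the local approximation hypothesis itself --- transferred from the well-understood order theory of the $\mathscr{I}^{(k)}$ building blocks via conditions (1)--(3) --- and only then performs counting arguments of the kind you attempt; that preparatory machinery, not bare simplicity, is what makes the absorption step legitimate, and it is the missing ingredient in your sketch.
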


In \cite{Ph-F}, Phillips proved the following:
\begin{thm}\label{Ph-TR0}
Let $A$ be an infinite dimensional simple separable unital C*-algebra with tracial rank zero. Let $\af\colon G\rightarrow \Aut(A)$ be an action of a finite group $G$ on $A$ with the tracial Rokhlin property. Then the crossed product $C^*(G,A,\af)$ has tracial rank zero.
\end{thm}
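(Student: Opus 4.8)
The plan is to verify the simple-case characterization of tracial rank zero (Theorem~\ref{TR-S}) directly for $B=C^*(G,A,\af)$. First I would record that $B$ is simple: it is standard (cf.\ \cite{Ph-F}) that the tracial Rokhlin property makes $\af$ pointwise outer, so the crossed product of the simple algebra $A$ is simple; moreover $B$ is infinite dimensional and separable and carries the canonical faithful conditional expectation $E\colon B\to A$, $E\left(\sum_{g\in G}a_gu_g\right)=a_{1_G}$. Thus it suffices, given a finite $F\subset B$, an $\ep>0$ and a nonzero $b\in B_+$, to build a finite dimensional $D\subset B$ with unit $p$ so that $\|[x,p]\|<\ep$ and $pxp\in_\ep D$ for $x\in F$, and $1-p\pa b$. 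By a routine approximation I reduce to $F=F_0\cup\{u_g:g\in G\}$ with $F_0$ a finite subset of the unit ball of $A$, since a general element $\sum_g a_gu_g$ is controlled once $F_0$ and the $u_g$ are.

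The heart of the construction is to manufacture a matrix unit system from the Rokhlin data. Applying the tracial Rokhlin property to $F_0$, a small tolerance $\dt$, and a comparison element chosen below, I obtain mutually orthogonal projections $e_g\in A$ with $\|\af_g(e_h)-e_{gh}\|<\dt$, $\|[e_g,a]\|<\dt$ for $a\in F_0$, and $1-e\pa(\text{the chosen element})$ where $e=\sum_g e_g$; each $e_g$ is nonzero. Set $f=e_{1_G}$ and $C=fAf$. The partial isometries $s_g=u_gf$ satisfy $s_g^*s_g=f$ and $s_gs_g^*=\af_g(f)$ exactly, and since $\af_g(f)$ is close to $e_g$ it is Murray--von Neumann equivalent to it; composing $s_g$ with a partial isometry implementing $\af_g(f)\sim e_g$ gives exact partial isometries $\tilde s_g$ with $\tilde s_g^*\tilde s_g=f$, $\tilde s_g\tilde s_g^*=e_g$ and $\tilde s_g\approx u_gf$. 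Then $w_{g,h}=\tilde s_g\tilde s_h^*$ is a genuine system of $|G|\times|G|$ matrix units with $\sum_g w_{g,g}=e$, and together with $C$ it generates a unital copy of $M_{|G|}(C)$ inside $eBe$, with unit $e$.

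Now I bring in $TR(A)=0$. The corner $C=fAf$ is a nonzero hereditary subalgebra of the infinite dimensional simple algebra $A$, hence is itself infinite dimensional, simple and of tracial rank zero. Feeding $\{faf:a\in F_0\}\subset C$, the tolerance, and a small comparison element of $C$ into the tracial rank zero condition for $C$, I obtain a finite dimensional $C_0\subset C$ with unit $p_0\le f$ that approximately contains and approximately commutes with the compressions, and with $f-p_0$ dominated by the chosen small element. Setting $D=\spn\{w_{g,1_G}\,C_0\,w_{1_G,h}\}\cong M_{|G|}(C_0)$ gives a finite dimensional subalgebra of $B$ with unit $p=\sum_g w_{g,1_G}\,p_0\,w_{1_G,g}\le e$. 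Because $w_{g,1_G}p_0w_{1_G,g}\approx\af_g(p_0)$, the projection $p$ is approximately $\af$-invariant, so it approximately commutes with every $u_g$; combined with the Rokhlin relations and $u_gw_{h,k}u_g^*\approx w_{gh,gk}$ this yields $\|[x,p]\|<\ep$ and $pxp\in_\ep D$ for all $x\in F_0\cup\{u_g\}$, the $u_g$ being approximated by the permutation part $\sum_h w_{gh,1_G}p_0w_{1_G,h}$ of $D$.

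The remaining and most delicate point is $1-p\pa b$. Writing $1-p=(1-e)\oplus(e-p)$ with $e-p=\sum_g w_{g,1_G}(f-p_0)w_{1_G,g}\sim |G|\odot(f-p_0)$, I must dominate both defects by $b$. The key auxiliary input is that, using the faithful conditional expectation $E$ and simplicity of $B$, there is a nonzero $c\in A_+$ with $c\pa b$ in $B$ (morally, a diagonal compression of $b$ in the regular representation $B\hookrightarrow M_{|G|}(A)$ is Cuntz dominated by $b$; this is where the finite group structure enters, following \cite{Ph-F}). Since $TR(A)=0$ gives property (SP), I replace $c$ by a nonzero projection $r\in\overline{cAc}\subset A$ with $r\pa b$, and by Lemma~\ref{SP2} split $r$ into mutually orthogonal, mutually equivalent subprojections: one of them serves as the comparison element in the tracial Rokhlin property, giving $1-e\pa$ that subprojection, while $|G|$ of them are reserved for the corner so that $|G|\odot(f-p_0)\pa r$. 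Adding the pieces yields $1-p\pa r\pa b$. I expect the main obstacle to be precisely this bookkeeping --- producing $c\in A_+$ with $c\pa b$, and then apportioning a single projection of $A$ to bound simultaneously $1-e$ and the $|G|$-fold defect $e-p$ while keeping all tolerances consistent with the matrix unit perturbations --- rather than the essentially formal verification of conditions (1) and (2).
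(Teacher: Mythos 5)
Your proposal is correct and follows essentially the same route the paper takes: the paper attributes this theorem to Phillips but proves the generalization (Theorem \ref{Main}) with exactly your architecture --- exact matrix units manufactured from the Rokhlin projections (Lemma \ref{matrix}), the tracial rank zero condition applied in the corner $e_1Ae_1$ and amplified over $M_{|G|}$, and the comparison $1-p\pa b$ handled by splitting the defect as $(1-e)\oplus(e-p)$ and apportioning mutually equivalent subprojections obtained from Property (SP) (Lemma \ref{SP2}, Proposition \ref{SP-cross}). One caveat: your parenthetical heuristic that a diagonal compression of $b$ is Cuntz-dominated by $b$ is false in general (an expectation can increase rank), but the statement you actually invoke --- a nonzero $c\in A_+$ with $c\pa b$ in the crossed product --- is precisely the paper's Proposition \ref{SP-cross} via Osaka's Theorem \ref{O1}, whose proof cuts $b$ down by a single Rokhlin projection using the commutator estimates and the norm condition, so your citation of \cite{Ph-F} for this input is legitimate.
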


One natural question would be how to extend the above result to the non-simple case. The main result of this section is that our definition of weak tracial Rokhlin property works in some special case, namely when the crossed product is simple: 

\begin{thm}\label{Main}
Let $A$ be an infinite dimensional separable unital \ca\. Let $\af\colon G\rightarrow Aut(A)$ be an action with the weak tracial Rokhlin property. Assume that $A$ is $\af$ simple and $TR(A)\leq k$. Then $C^*(A, G, \af)$ has tracial rank $\leq k$.\newline
\end{thm}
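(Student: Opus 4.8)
The plan is to verify the simple-case characterization of tracial rank, Theorem \ref{TR-S}, for the crossed product $C = C^*(G,A,\af)$, recalling that $C$ is simple under the present hypotheses (the weak tracial Rokhlin property makes the $\af$-simple algebra $A$ yield a simple crossed product). So I fix a finite set $F \subset C$, a tolerance $\ep > 0$, and a nonzero positive $b \in C$, and seek a subalgebra $B \in \mathscr{I}^{(k)}$ with $1_B = p$ satisfying the commutation, approximate-containment, and comparison conditions of Theorem \ref{TR-S}. A first routine reduction lets me assume $F = F_0 \cup \{u_g : g\in G\}$, where the $u_g$ are the canonical unitaries implementing $\af$ and $F_0$ is a finite subset of the unit ball of $A$, since every element of $C$ is approximated by finite sums $\sum_g a_g u_g$. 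The overall strategy follows that of Theorem \ref{Ph-TR0}.

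Next I would bring in the Rokhlin data. Using the invariant form of the weak tracial Rokhlin property (Lemma \ref{wTRP-inv}) applied to $F_0$, a small tolerance, a norm-one positive element extracted from $b$, and a \emph{full} positive $x \in A$ to be specified, I obtain projections $e_g \in A$ with $e = \sum_g e_g$ invariant, $\af_g(e_h) \approx e_{gh}$, each $e_g$ almost commuting with $F_0$, and $1 - e \pa x$. Since $e$ is invariant, each $u_g$ commutes with $e$, and the elements $v_g = u_g e_{1_G}$ satisfy $v_g^* v_g = e_{1_G}$ and $v_g v_g^* = \af_g(e_{1_G}) \approx e_g$; consequently the products $f_{g,h} = v_g v_h^*$ obey the relations of a system of $|G|\times|G|$ matrix units up to an error controlled by $\max_g \|\af_g(e_{1_G}) - e_g\|$. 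By stability of these relations I perturb them to an exact system $\{f_{g,h}\} \subset C$ with $\sum_g f_{g,g} = e$ and $f_{1_G,1_G}$ close to $e_{1_G}$.

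I then produce the building block inside the corner. Because $A$ is only $\af$-simple and need not be simple, I would use the general characterization of tracial rank, Theorem \ref{TR}, applied to $A$ with the finite set $\{e_{1_G}\af_{g^{-1}}(a)e_{1_G} : a \in F_0,\ g\in G\}$ together with $e_{1_G}$, cutting the resulting block down into the corner $e_{1_G}Ae_{1_G}$ to obtain $D \in \mathscr{I}^{(k)}$ with unit a projection $q$ below (a perturbation of) $e_{1_G}$, such that the relevant compressions lie approximately in $D$ and, crucially, condition (3) of Theorem \ref{TR} bounds $e_{1_G} - q$ in Blackadar comparison by a prescribed small piece of $b$. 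Amplifying by the matrix units, $B = \spn\{f_{g,1_G}\, D\, f_{1_G,h} : g,h \in G\} \cong M_{|G|}(D)$ is again in $\mathscr{I}^{(k)}$, with $\af$-invariant unit $p = \sum_g \af_g(q)$. Conditions (1) and (2) of Theorem \ref{TR-S} then follow from the near-commutation of the $e_g$ and $q$ with $F_0$, from the exact commutation of the invariant $p$ with the $u_g$, and from the identity $v_g D v_h^* = \af_g(D) u_{gh^{-1}}$, which shows that the compressions $p u_k p = \sum_g \af_g(q) u_k$ and $p a p$ land approximately in $B$.

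The heart of the matter, and the step I expect to be hardest, is the comparison condition $1 - p \pa b$. Writing $1 - p = (1-e) + \sum_g \af_g(e_{1_G}-q)$ as an orthogonal sum, I must dominate the Rokhlin defect $1-e$ and the building-block defect $\sum_g \af_g(e_{1_G}-q)$ by orthogonal pieces of $b$ inside the crossed product. This forces me to transfer comparison data between $C$ and $A$: I must first extract from the nonzero positive $b \in C$ a \emph{full} positive $x \in A$ (and orthogonal sub-pieces) that is subequivalent to $b$ in $C$, exploiting simplicity of $C$ so that the ideal generated by $b$ is everything; feed $x$ into the weak tracial Rokhlin property to get $1 - e \pa x \pa b$; and simultaneously arrange, through the parameters $\sm_1 > \sm_2 > \sm_3 > \sm_4$ of Theorem \ref{TR}, that $e_{1_G}-q$ is so small that the $|G|$ invariant translates $\af_g(e_{1_G}-q) \sim e_{1_G}-q$ together still compare into the remaining part of $b$. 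Managing this bookkeeping—choosing $x$, the norm-one element, and the tracial-rank parameters in the right order so that the two defects add up below $b$ while $A$ is merely $\af$-simple—is where the real work lies; once it is in place, Theorem \ref{TR-S} yields $TR(C) \le k$.
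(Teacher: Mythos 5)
Your overall architecture matches the paper's: reduce to the simple-algebra criterion (Theorem \ref{TR-S}) for the simple crossed product, invoke the invariant form of the weak tracial Rokhlin property (Lemma \ref{wTRP-inv}), assemble matrix units over the corner so that $eC^*(G,A,\af)e$ approximately contains a copy of $M_{|G|}\otimes e_{1}Ae_{1}$ (the paper packages this as Lemma \ref{matrix}), and split the defect as $1-p=(1-e)+(e-q)$. But the step you yourself flag as ``where the real work lies'' is exactly where your plan has a genuine gap, and the mechanism you propose for it would fail. First, you cannot get the building block by applying Theorem \ref{TR} to $A$ and ``cutting the resulting block down into the corner'': a subalgebra of $A$ in $\mathscr{I}^{(k)}$ does not compress into $e_{1}Ae_{1}$. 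The correct move (the paper's) is that $TR\leq k$ passes to hereditary subalgebras and matrix amplifications, so $TR(M_n\otimes e_{1}Ae_{1})\leq k$, and then — this is the key idea you are missing — one invokes Lin's \emph{tracial weak rank}: $TR(A)\leq k$ implies $TR_w(A)\leq k$ (Corollary 5.7 of \cite{Lin-TR1}), whose comparison condition dominates $1-p_0$ by an arbitrary prescribed \emph{full} positive element. No amount of tuning the parameters $\sm_1>\sm_2>\sm_3>\sm_4$ in Theorem \ref{TR} will make $e_{1}-q$ subequivalent to a prescribed small element in a non-simple algebra; condition (3) there only compares functional-calculus cutdowns of the single element fed into it, which is precisely why Lin needs $TR_w$ and why the weak tracial Rokhlin property was designed around full elements.

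Second, your extraction of the comparison data from $b\in C^*(G,A,\af)$ is underpowered: simplicity of the crossed product alone does not produce projections in $\Her(b)$, nor a projection in $A$ subequivalent to a piece of $b$. The paper first proves that the crossed product has Property (SP) (Proposition \ref{SP-cross}, via Osaka's Theorem \ref{O1}; note this is where the norm condition $\|ebe\|>1-\ep$ of Definition \ref{wTRP} is actually used), and the ``moreover'' clause there transfers a projection from any hereditary subalgebra of the crossed product to a Murray--von Neumann equivalent projection in $A$. Concretely: one finds $2|G|$ orthogonal equivalent projections $p_1,\dots,p_{2|G|}$ in $\Her(b)$ (Lemma \ref{SP2}), a projection $p'\in A$ with $p'\pa p_1$, then a subprojection $p\leq p'$ with $\af_g(p)\pa p_1$ for \emph{all} $g$; the element $\hat{p}=\sum_g\af_g(p)$ is then full in $A$ by simplicity of the crossed product, is fed into Lemma \ref{wTRP-inv} to control $1-e\pa\hat{p}$, while $e_{1}\hat{p}e_{1}$ (shown full in $e_{1}Ae_{1}$) is fed into $TR_w$ to control $e-q$; both defects then sum below $p_1\oplus\cdots\oplus p_{2|G|}\pa b$. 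You should also record the preliminary dichotomy: if $A$ lacks Property (SP), Lemma \ref{SP-SRP2} gives the strict Rokhlin property and Theorem \ref{SRP-TR} finishes, so one may assume Property (SP) throughout. Without the $TR_w$ device and Proposition \ref{SP-cross}, your sketch does not close.
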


We shall prove this theorem at the end of this section.\newline

The assumption that $A$ is $\af$ simple is used to ensure that $C^*(G,A,\af)$ is simple, as we have the following:
\begin{lem}\label{simple}
Let $A$ be an unital C*-algebra. Let $\af\colon G\rightarrow \Aut(A)$ be a finite group action with the weak tracial Rokhlin property. Then $A$ is $\af$-simple \ifo\ $C^*(G,A,\af)$ is simple.
\end{lem}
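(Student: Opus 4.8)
The statement is an equivalence, and I would prove the two implications separately, with essentially all of the work in the direction ``$A$ is $\af$-simple $\Rightarrow C^*(G,A,\af)$ is simple.'' The reverse direction I would do by contraposition: if $J\subsetneq A$ is a nonzero $\af$-invariant closed ideal, then since $G$ is finite the subalgebra $C^*(G,J,\af|_J)$ is a nonzero ideal of $C^*(G,A,\af)$, and it is proper because the quotient is $C^*(G,A/J,\overline{\af})\neq 0$; hence the crossed product is not simple.

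For the forward direction, let $I$ be a nonzero closed ideal of $C^*(G,A,\af)$. The goal is to produce a nonzero element of $I\cap A$: the intersection $I\cap A$ is automatically an $\af$-invariant ideal of $A$, because conjugation by the canonical unitaries $u_g$ preserves $I$ and implements $\af_g$ on $A$, so once $I\cap A\neq\{0\}$, $\af$-simplicity forces $I\cap A=A\ni 1$ and therefore $I=C^*(G,A,\af)$. To get into $A$ I would use the canonical faithful conditional expectation $E\colon C^*(G,A,\af)\to A$ sending $\sum_{h\in G}a_h u_h$ to $a_{1_G}$. Picking any nonzero $w\in I$ and setting $z=w^*w/\|w^*w\|\in I_+$, faithfulness gives $a:=E(z)\neq 0$; writing $z=\sum_{h\in G}a_h u_h$ (an exact finite sum, as $G$ is finite), we have $a_{1_G}=a$ and $\|a_h\|\leq 1$.

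Now I would invoke the weak tracial Rokhlin property with $F=\{a\}\cup\{a_h:h\in G\}$, with the norm-$1$ positive element $b=a/\|a\|$, with the full positive element $x=1$ (so the property applies and condition (3), which becomes $1-e\pa 1$, is harmless), and with a small tolerance $\ep$. This produces mutually orthogonal projections $e_g$ with $e=\sum_g e_g$ obeying $\af_h(e_g)\approx e_{hg}$, approximate commutation with $F$, and $\|eae\|>(1-\ep)\|a\|$. The key computation is that compressing $z$ by a single Rokhlin projection annihilates the off-diagonal terms: using $u_h e_g\approx e_{hg}u_h$ and $[a_h,e_{hg}]\approx 0$, every summand with $h\neq 1_G$ cancels and one gets $e_g z e_g\approx e_g a e_g$ for each $g$, with each $e_g z e_g\in I$.

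The step I expect to be the main obstacle is guaranteeing that the resulting element of $A$ is nonzero, i.e. bounding $\|e_{g_0}ae_{g_0}\|$ below for a good choice of $g_0$; this is exactly what the norm condition is for. Approximate commutation of $a$ with the $e_g$ gives $eae\approx\sum_g e_g a e_g$, and since the corners $e_gAe_g$ are mutually orthogonal the norm of this diagonal sum equals $\max_g\|e_ga e_g\|$; combined with $\|eae\|>(1-\ep)\|a\|$ this forces $\|e_{g_0}ae_{g_0}\|>(1-\ep)\|a\|-O(\ep)$ at the maximizing index $g_0$. Choosing $\ep$ small, the element $c:=e_{g_0}ae_{g_0}\in A_+$ then satisfies $\|c\|>\tfrac34\|a\|$ while $\dist(c,I)\leq\|c-e_{g_0}ze_{g_0}\|<\tfrac14\|a\|$. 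The standard fact that $(c-\delta)_+\in I$ whenever $c\in A_+$ and $\dist(c,I)<\delta$ (seen by passing to the quotient $C^*(G,A,\af)/I$) then delivers a nonzero element $(c-\tfrac14\|a\|)_+\in I\cap A$, and the reduction above finishes the proof. The delicate bookkeeping is controlling the $|G|$ error terms produced by the Rokhlin relations and the commutators, each bounded using the tolerance $\ep$ and $\|a_h\|\leq 1$, and checking that the block-diagonal norm identity survives these approximations.
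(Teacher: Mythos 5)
Your proposal is correct and is essentially the paper's own (first) route written out in full: the easy direction via the induced ideal $C^*(G,J,\af)$ is identical, and for the hard direction your single Rokhlin projection $p=e_{g_0}$ is exactly the projection in the property the paper extracts from the Rokhlin projections, namely $\|pa_0p\|>\|a_0\|-\ep$, $\|pa_i-a_ip\|<\ep$, and $\|p\af_g(p)\|<\ep$ for $g\neq 1$ (your $e_{g_0}\af_g(e_{g_0})\approx e_{g_0}e_{gg_0}=0$), after which the paper simply cites ``the same lines as Theorem 7.2 of \cite{OPed}'' for precisely your expectation-compression argument that $e_{g_0}ze_{g_0}\approx e_{g_0}E(z)e_{g_0}$ with off-diagonal terms annihilated, followed by the standard $\dist(c,I)<\dt\Rightarrow(c-\dt)_+\in I\cap A$ step and $\af$-invariance of $I\cap A$. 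The only differences are presentational — you inline what the paper outsources to \cite{OPed} (or, in its alternative route, to Sierakowski's residual Rokhlin$^*$ property), and your extraction of a good index $g_0$ via $\|\sum_{g}e_gae_g\|=\max_g\|e_gae_g\|$ together with the norm condition $\|ebe\|>1-\ep$ is the same computation the paper itself performs in Proposition \ref{SP-cross}.
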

\begin{proof}
If $I$ is a proper $\af$-invariant ideal in $A$, then $C^*(G, I,\af)$ is a proper ideal in $C^*(G,A,\af)$, hence $C^*(G,A,\af)$ is simple implies $A$ is $\af$-simple. For the other direction, there are several proofs. Using the Rokhlin projections, it's not hard to show that for any $a_0,a_1,\dots, a_n$ in $A$ with $a_0$ positive, any $\ep>0$, there is a projection $p$ such that:
\begin{equation*}
\|pa_0p\|>\|a_0\|-\ep, \quad \|pa_i-a_ip\|<\ep, \quad \|p\af_g(p)\|<\ep \;\text{for}\; g\neq 1 \quad (*)
\end{equation*}
 Then the same lines as Theorem 7.2 of \cite{OPed} shows that $C^*(G,A,\af)$ is simple. \newline
A more sophisticated way is to use Theorem 2.5 of \cite{S-IS}. A discrete group is exact if and only if the reduced C*-algebra is exact. In particular, finite groups are exact. When $A$ is $\af-$simple, it's easy to see that the weak tracial Rokhlin property implies the residual Rokhlin$^*$ property (See Definition 2.1, \cite{S-IS}). By Theorem 2.5 of \cite{S-IS}, the $A$ separates the ideal in the crossed product, which is equivalent to that the crossed product is simple. 
\end{proof}

When $A$ is $\af-$simple, we have a similar result as in Lemma \ref{SP-SRP}. It may not be true in general.
\begin{lem}\label{SP-SRP2}
Let $A$ be an unital C*-algebra. Let $\af\colon G\rightarrow \Aut(A)$ be a finite group action with the weak tracial Rokhlin property. If $A$ is $\af$-simple, then either $A$ has Property (SP) or $\af$ has the strict Rokhlin property
\end{lem}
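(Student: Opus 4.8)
The plan is to follow the dichotomy proof of Lemma \ref{SP-SRP}. If $A$ already has Property (SP) there is nothing to prove, so the whole point is to show that the \emph{failure} of Property (SP) forces the strict Rokhlin property. Thus assume $A$ does not have Property (SP) and fix a finite set $F\subseteq A$ and $\ep>0$; I must produce \mops\ $e_g$ with $\sum_{g\in G}e_g=1$ satisfying conditions (1) and (2) of Definition \ref{SRP}. The failure of Property (SP) provides a nonzero \hsa\ $D\subseteq A$ containing no \nzp; fix $d\in D_{+}$ with $\|d\|=1$, so that $\Her(d)\subseteq D$ is projectionless.

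The mechanism I want to exploit is the following: if $x\in A_{+}$ is \emph{full} and $\Her(x)$ is projectionless, then feeding $x$ into the weak tracial Rokhlin property yields projections $e_g$ whose defect satisfies $1-e\pa x$, and since $1-e$ is a projection the partial-isometry description of $\pa$ (the lemma following Definition \ref{B-C}) produces a projection $v^{*}(1-e)v\in\Her(x)$ equivalent to $1-e$; projectionlessness of $\Her(x)$ then forces $1-e=0$, i.e.\ $\sum_{g}e_g=1$. Conditions (1) and (2) of Definition \ref{SRP} are exactly the surviving conditions (1),(2) of Definition \ref{wTRP}, so the strict Rokhlin property follows at once. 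Hence everything reduces to producing a \emph{full} positive element whose hereditary subalgebra is projectionless.

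When $A$ is simple this is immediate: every nonzero positive element is full, so $x=d$ works and the argument collapses to Phillips' proof of Lemma \ref{SP-SRP} (here the full-ness clause in Definition \ref{wTRP} costs nothing). In general I would use $\af$-simplicity to manufacture fullness by averaging: the ideal generated by $\{\af_g(d):g\in G\}$ is $\af$-invariant and nonzero, hence all of $A$, so $x:=\sum_{g\in G}\af_g(d)$ is full. The delicate point is that $\Her(x)$ need not be projectionless even though each $\Her(\af_g(d))$ is, because of the linking corners $\overline{\af_g(d)\,A\,\af_h(d)}$ for $g\neq h$. To control them I would pass to the distinct $G$-translates $I_1,\dots,I_m$ of the ideal $\overline{AdA}$ (so $\bigcap_j I_j=0$ and $\sum_j I_j=A$) and replace $d$ by a positive element separated from the overlaps, arranging that the translates $\af_g(d)$ are mutually \emph{algebra}-orthogonal, i.e.\ $\af_g(d)\,A\,\af_h(d)=0$ for $g\neq h$. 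Then all linking corners vanish and $\Her(x)=\bigoplus_{g\in G}\Her(\af_g(d))$ is an orthogonal direct sum of projectionless algebras, hence projectionless, while $x$ stays full.

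The step I expect to be the main obstacle is precisely this last reconciliation: $x$ must be full (which $\af$-simplicity supplies by summing the orbit) yet have projectionless hereditary subalgebra (which the summation tends to destroy through the cross terms), and an orthogonal direct sum of projectionless algebras is projectionless whereas the hereditary subalgebra of a generic sum is not. The averaging-with-orthogonalization is transparent for direct-sum type $\af$-simple algebras, where the orbit of $D$ genuinely splits into orthogonal blocks; the real content is that $\af$-simplicity always lets one separate the orbit enough to kill the linking corners — and this is exactly why the statement can fail without $\af$-simplicity. I would also keep Lemma \ref{wTRP-inv} in reserve to take the defect $1-e$ invariant, since then $1-e$ (if nonzero) generates an invariant ideal and is therefore a full projection, which weakens the requirement on $x$ from strict projectionlessness of $\Her(x)$ to the mere absence of a \emph{full} projection in $\Her(x)$.
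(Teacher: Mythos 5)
Your reduction is exactly right and matches the paper's mechanism: find a \emph{full} positive element $x$ with $\Her(x)$ projectionless, feed it into Definition \ref{wTRP}, and conclude from $1-e\pa x$ (via the partial-isometry description of $\pa$) that $1-e=0$, so conditions (1) and (2) deliver the strict Rokhlin property. The genuine gap is the step you yourself flagged as the main obstacle, and which you never resolve: how $\af$-simplicity ``separates the orbit.'' The paper gets this from a structure theorem (Proposition 2.1 of \cite{R-F}): an $\af$-simple unital \ca\ is a \emph{finite direct sum} $A=I_1\oplus\cdots\oplus I_n$ of simple ideals permuted transitively by $G$. With that in hand one cuts $b$ down to a nonzero component $b_1\in I_1$ (which lies in $\Her(b)$, so $\Her(b_1)$ is still projectionless) and translates it by \emph{coset representatives} $g_1,\dots,g_n$ with $\af_{g_i}(I_1)=I_i$ --- one translate per block, not the full orbit --- forming $b'=(\af_{g_1}(b_1),\dots,\af_{g_n}(b_1))$. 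Fullness is automatic because each component is a nonzero element of a simple summand, and $\Her(b')=\bigoplus_i\Her(\af_{g_i}(b_1))$ is projectionless because distinct summands kill the linking corners with no further work.

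Your concrete repair plan, by contrast, is provably impossible in general. You propose to arrange $\af_g(d)\,A\,\af_h(d)=0$ for \emph{all} $g\neq h$ in $G$; but whenever the stabilizer of a block is nontrivial, two such translates are nonzero elements of the \emph{same} simple (hence prime) summand, and in a prime \ca\ $xAy=0$ forces $x=0$ or $y=0$. For instance, let $G=\Z/4\Z$ act on $B\oplus B$ ($B$ simple) with generator $(a,b)\mapsto(\bt(b),a)$ for an order-two $\bt\in\Aut(B)$: each block has stabilizer of order $2$, so no nonzero $d$ has four pairwise algebra-orthogonal translates. Your fallback of ``separating $d$ from the overlaps'' of the translates of $\overline{AdA}$ can also be vacuous: with three simple blocks permuted cyclically by $\Z/3\Z$ and $\overline{AdA}$ equal to the sum of two of them, every block of $\overline{AdA}$ lies in some other translate, so nothing survives the separation. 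The cure in both cases is the paper's: invoke the Rieffel decomposition to locate the \emph{minimal} simple blocks, localize $d$ to one of them, and sum over coset representatives only; fullness then comes from simplicity of each block rather than from $\af$-invariance or orthogonality of the whole orbit. (Your closing remark about using Lemma \ref{wTRP-inv} to weaken projectionlessness to the absence of a full projection is a sound observation, but it does not bridge this gap.)
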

\begin{proof}
When $A$ is $\af$-simple, by Proposition 2.1 of \cite{R-F}, $A$ is a finite direct sum of simple ideals which are permuted transitively among each other by the action of $G$. Write $A=I_1\oplus\cdots\oplus I_n$. Suppose $A$ does not have Property (SP). Then there exists some non-zero positive element $b\in A$ such that $\Her(b)$ contains no non-zero projection. Write $b=(b_1,\dots,b_n)$, where $b_i$ is a positive element in $I_i$. Since $b$ is non-zero, \Wolog\ $b_1$ is non-zero. Hence $\Her(b_1)$ contains no non-zero projection. Since the ideals are permuted transitively by the action of $G$, we can choose $g_i$ such that $\af_{g_i}(I_1)=I_i$, for each $i$. Let $b_i'=\af_{g_i}(b_1)$, consider the element $b'=(b_1',\dots, b_n')$. Since $b_1'$ is a full element in $I_1$, we can see that $b'$ is a full element in $A$. But $\Her(b')=\Her(b_1')\oplus\cdots \oplus\Her(b_n')$ contains no non-zero projections. Now if we choose Rokhlin projections correspond to $b'$, the defect projection will be forced to be 0, hence the Rokhlin projections must sum up to 1 and therefore $\af$ has the strict Rokhlin property. 
\end{proof}

It should be pointed out that to the author's knowledge, so far no versions of Rokhlin properties are known that could extend Theorem \ref{Ph-TR0} in full generality, except the strict Rokhlin property:

\begin{thm}\label{SRP-TR}
Let $A$ be an unital separable \ca\ with tracial rank $\leq k$. Let $\af\colon G\rightarrow Aut(A)$ be an action with the strict Rokhlin property. Then $C^*(A, G, \af)$ has tracial rank $\leq k$
\end{thm}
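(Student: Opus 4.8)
The plan is to exploit the strict Rokhlin property to present $C^*(G,A,\af)$ as a matrix algebra over a corner of $A$, to solve the tracial-rank approximation problem inside that corner using the hypothesis $TR(A)\le k$, and then to amplify the resulting building block back up. First I would reduce the data: by density it suffices to take a finite set of the form $F=F_A\cup\{u_g:g\in G\}$ with $F_A\subset A$ finite, together with the given positive $b$ of norm $1$ and a finite-sum approximation $b\approx\sum_g c_g u_g$, $c_g\in A$. Applying the strict Rokhlin property to the finite set $F_A\cup\{c_g\}\cup\{b\}$ with small tolerance yields projections $e_g$ with $\sum_g e_g=1$, $\af_g(e_h)\approx e_{gh}$, and $[e_g,\,\cdot\,]\approx 0$ on that set. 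I then form the approximate matrix units $w_{g,h}=u_g e_1 u_h^*$ and, since $M_{|G|}$ is semiprojective (exactly as in the first Proposition of Section 2), perturb them to genuine matrix units $\tilde w_{g,h}$. Because $\sum_g\tilde w_{g,g}$ is a projection within distance $<1$ of $\sum_g e_g=1$ it must equal $1$, so with $f=\tilde w_{1,1}\approx e_1$ the map $x\mapsto(\tilde w_{1,g}x\tilde w_{h,1})_{g,h}$ is a $*$-isomorphism $C^*(G,A,\af)\cong M_{|G|}(f C^*(G,A,\af)f)$.

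Next I would compress the problem into the corner. Using $[e_1,s]\approx 0$ one checks $e_1(s u_g)e_1\approx\delta_{g,1}\,e_1 s e_1$, so on the finite set the corner sees only $e_1 A e_1$: each $s\in F_A$ becomes block-diagonal with blocks $e_1\af_{g^{-1}}(s)e_1$, each $u_g$ becomes the permutation $\sum_h\tilde w_{gh,h}$, and $b$ becomes block-diagonal with blocks $b_0^{(g)}\approx e_1\af_{g^{-1}}(c_1)e_1\in e_1 A e_1$ (the crossed-product terms with $u_h$, $h\ne 1$, die under the double compression by $e_1$). Since $e_1 A e_1$ is a corner of $A$ it again has tracial rank $\le k$, so I apply Theorem \ref{TR} there to the finite set $\{e_1\af_g(s)e_1: s\in F_A,\,g\in G\}$ to produce $B_0\in\mathscr{I}^{(k)}$ with unit $p_0\le e_1$, and then set $B=M_{|G|}(B_0)\in\mathscr{I}^{(k)}$ with unit $p=\sum_g\tilde w_{g,1}p_0\tilde w_{1,g}$ inside $C^*(G,A,\af)$.

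Conditions (1) and (2) of Theorem \ref{TR} for the pair $(B,p)$ are then routine: $p$ is block-diagonal with the \emph{constant} block $p_0$, hence commutes with the permutation images of the $u_g$ and, block-wise, with the images of $F_A$ by condition (1) in the corner, while $pxp\in_\ep B$ follows from condition (2) in the corner. The defect is $1-p=\sum_g\tilde w_{g,1}(e_1-p_0)\tilde w_{1,g}$, i.e.\ $|G|$ mutually orthogonal conjugates of $e_1-p_0$; combining this with the block-diagonal form of $b$ and the additivity of $\pa$ over orthogonal summands reduces the comparison condition (3) in $C^*(G,A,\af)$ to the per-block statements $f^{\sm_1}_{\sm_2}\bigl((e_1-p_0)\,b_0^{(g)}\,(e_1-p_0)\bigr)\pa f^{\sm_3}_{\sm_4}\bigl(p_0\,b_0^{(g)}\,p_0\bigr)$ in $e_1 A e_1$ for every $g\in G$.

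Here lies the main obstacle. The $|G|$ elements $b_0^{(g)}=e_1\af_{g^{-1}}(c_1)e_1$ are genuinely distinct, since the action need not fix $c_1$, and monotonicity of the cut-down comparison pushes the two sides of (3) in opposite directions, so no single auxiliary element reduces all blocks to one invocation of Theorem \ref{TR}. I therefore expect to need, and would separately establish, a multi-element strengthening of the tracial-rank comparison: a single pair $(B_0,p_0)$ satisfying (1) and (2) together with the comparison (3) \emph{simultaneously} for the finite family $\{b_0^{(g)}\}_{g\in G}$. Granting this, additivity of $\pa$ assembles the per-block comparisons into condition (3) for $b$, completing the proof that $TR(C^*(G,A,\af))\le k$. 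The remaining work is threshold bookkeeping: the identifications $b_0^{(g)}\approx e_1\af_{g^{-1}}(c_1)e_1$ and $p_0\le e_1\approx f$ are only approximate, so the $\sm_i$ must be replaced by slightly interleaved thresholds and every tolerance chosen small relative to the gaps $\sm_i-\sm_{i+1}$, using the standard stability of the cuts $f^{\sm_1}_{\sm_2}$ under small perturbations.
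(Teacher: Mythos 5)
Your opening moves --- Rokhlin projections with $\sum_g e_g=1$, approximate matrix units $u_ge_1u_h^*$ rigidified by semiprojectivity of $M_{|G|}$, and compression of the finite set into the corner $e_1Ae_1$ --- are exactly the machinery the paper extracts from Phillips as Lemma \ref{matrix} and Proposition \ref{SRP-C}. The divergence, and the genuine gap, is in your endgame: you try to verify Lin's definition (Theorem \ref{TR}) directly in the crossed product, and the comparison condition (3) forces you to posit a ``multi-element strengthening'' of Theorem \ref{TR} --- a single pair $(B_0,p_0)$ in $e_1Ae_1$ whose comparison condition holds simultaneously for all $|G|$ blocks $b_0^{(g)}$. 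You explicitly grant this rather than prove it, and it is not routine bookkeeping: Theorem \ref{TR} supplies $(B_0,p_0)$ for \emph{one} designated positive element, and since $A$ here is merely unital with $TR(A)\le k$ (non-simplicity is the entire point of stating Theorem \ref{SRP-TR} separately from Theorem \ref{Ph-TR0}), there is no way to replace the family $\{b_0^{(g)}\}$ by a single element: the standard trick of finding one positive element Blackadar-subordinate to every $f^{\sm_3}_{\sm_4}\bigl(p_0b_0^{(g)}p_0\bigr)$ rests on simplicity (Lemma 3.5.6 of \cite{Lin-C}) and breaks down when the $b_0^{(g)}$ can generate different ideals. Worse, the defect $1-p$ consists of $|G|$ mutually orthogonal copies of $e_1-p_0$, so even blockwise you need a $|G|$-fold version $|G|\odot(e_1-p_0)$-type comparison, again beyond what Theorem \ref{TR} provides. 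So what you defer is the crux of the theorem, not threshold bookkeeping.

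The paper's proof sidesteps this difficulty entirely, and seeing how is instructive. Because the strict Rokhlin property has no defect projection ($\sum_g e_g=1$ exactly), Proposition \ref{SRP-C} produces a \emph{unital} homomorphism $\phi\colon M_n(fAf)\to C^*(G,A,\af)$ whose image contains the given finite set up to $\ep$; each such image has $TR\le k$ because $TR\le k$ passes to unital hereditary subalgebras, matrix amplifications, and sub-quotients (Proposition 5.1, Theorems 5.3 and 5.8 of \cite{Lin-TR1}); and then one concludes by the permanence of $\{TR\le k\}$ under unital local approximation, which packages all of the $f^{\sm_1}_{\sm_2}$-comparison analysis once and for all in the cited literature instead of re-deriving it by hand inside the crossed product. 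The repair for your argument is therefore not to prove your simultaneous-comparison lemma (whose truth in the non-simple setting is unclear) but to stop after your second paragraph: having shown that every finite subset of $C^*(G,A,\af)$ lies within $\ep$ of a unital subalgebra of the form $\phi\bigl(M_{|G|}(e_1Ae_1)\bigr)$ with tracial rank at most $k$, invoke the local-approximation closure property of tracial rank rather than Theorem \ref{TR} itself.
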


The proof of the above theorem is based on the following lemma, which can be extracted from the proof of Theorem 2.2 in \cite{Ph-F}:
\begin{prp}\label{SRP-C}
Let $A$ be a unital \ca\, $\af\colon G\rightarrow \Aut(A)$ be an action with the strict Rokhlin property. Let $n=\card(G)<\infty$. Then for any finite set $F$in the crossed product $C^*(G,A,\af)$, any $\ep>0$, there exists a projection $f\in A$ and a unital homomorphism $\phi\colon M_n(fAf)\rightarrow C^*(G,A,\af)$ such that $\dist(b,\phi(M_n(fAf))<\ep$, for any $b\in F$.
\end{prp}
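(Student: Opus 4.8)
The plan is to manufacture, inside $C^*(G,A,\af)$, an honest system of $n\times n$ matrix units out of the (approximate) Rokhlin data, with corner projection lying in $A$, and then to recognize the relevant part of $C^*(G,A,\af)$ as $M_n$ over that corner. First I would write the elements of $F$ as finite sums $\sum_{k\in G}a_ku_k$, where the $u_k$ are the canonical unitaries implementing $\af$ and $a_k\in A$; this is harmless since such sums are dense. Let $S\subseteq A$ be the finite set of all coefficients $a_k$ occurring, and fix a tolerance $\ep'$, to be specified, depending on $\ep$, on $n$, on the number of summands, and on $\max_k\|a_k\|$. Applying the strict Rokhlin property (Definition \ref{SRP}) to $S$ and $\ep'$ produces \mops\ $e_g$, $g\in G$, with $\sum_g e_g=1$, $\|\af_g(e_h)-e_{gh}\|<\ep'$, and $\|e_ga-ae_g\|<\ep'$ for $a\in S$.

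Next I would set $f=e_{1_G}$ and, for each $g$, form $v_g=u_gf$. Since $fu_g^*u_gf=f$ one has $v_g^*v_g=f$ exactly, while $v_gv_g^*=u_gfu_g^*=\af_g(f)$ is a projection within $\ep'$ of $e_g$. Because $\|\af_g(f)-e_g\|<\ep'<1$, the projections $\af_g(f)$ and $e_g$ are Murray--von Neumann equivalent through a partial isometry $z_g$ close to $e_g$ with $z_gz_g^*=e_g$ and $z_g^*z_g=\af_g(f)$, where $z_{1_G}=f$. Setting $\tilde v_g=z_gv_g$ (so $\tilde v_{1_G}=f$) gives genuine partial isometries with $\tilde v_g^*\tilde v_g=f$ and $\tilde v_g\tilde v_g^*=e_g$, hence $\tilde v_g^*\tilde v_h=\dt(g,h)f$ and $\sum_g\tilde v_g\tilde v_g^*=1$. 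Thus the $\tilde v_g\tilde v_h^*$ form an exact system of matrix units whose corner $\tilde v_{1_G}\tilde v_{1_G}^*=f$ lies in $A$, and I can define the unital homomorphism $\phi\colon M_n(fAf)\to C^*(G,A,\af)$ by $\phi\big((y_{g,h})_{g,h}\big)=\sum_{g,h}\tilde v_gy_{g,h}\tilde v_h^*$; unitality follows from $\sum_g\tilde v_gf\tilde v_g^*=\sum_g e_g=1$.

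It remains to approximate each $b=\sum_k a_ku_k$ by an element of $\phi(M_n(fAf))$, and here the Rokhlin estimates enter. Using $\sum_g e_g=1$ twice, write $a_ku_k=\sum_{g,h}e_ga_ku_ke_h$. Moving $u_k$ past $e_h$ via covariance ($u_ke_h=\af_k(e_h)u_k\approx e_{kh}u_k$) and using that $a_k$ nearly commutes with the $e_g$, each summand with $h\neq k^{-1}g$ is within $O(\ep')$ of $0$ because $e_ge_{kh}=0$, while the surviving terms are $\approx\sum_g e_ga_ke_gu_k$. A short computation then shows that $v_g^*\big(e_ga_ku_ke_{k^{-1}g}\big)v_{k^{-1}g}$ is within $O(\ep')$ of $f\af_{g^{-1}}(a_k)f\in fAf$, so that $a_ku_k$ is within $O(\ep')$ of $\phi$ applied to the matrix whose $(g,k^{-1}g)$ entry is $f\af_{g^{-1}}(a_k)f$ and whose other entries vanish (replacing $v$ by $\tilde v$ costs a further $O(\ep')$). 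Summing over $k$ and over $b\in F$, and choosing $\ep'$ small enough at the outset, yields $\dist\big(b,\phi(M_n(fAf))\big)<\ep$.

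The main obstacle is the second step: producing exact partial isometries, and hence an exact copy of $M_n$, out of the merely approximate Rokhlin covariance while keeping the corner projection $f$ and all the range projections $e_g$ inside $A$. This is what forces the specific choice $v_g=u_gf$, whose source projection is already exactly $f$, and the correction by $z_g$ to pin the range down to $e_g$; a naive perturbation of the full matrix-unit system via semiprojectivity of $M_n$ would give an abstract copy of $M_n$ but would not obviously place its corner in $A$. The remaining effort is bookkeeping: tracking how the $O(\ep')$ errors accumulate across the $n^2$ matrix slots and the finitely many summands, and fixing $\ep'$ accordingly.
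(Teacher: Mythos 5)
Your proposal is correct and follows essentially the same route as the paper's proof (which is extracted from Theorem 2.2 of \cite{Ph-F} and codified later as Lemma \ref{matrix}): apply the Rokhlin property to the coefficient set of the elements $\sum_k a_k u_k$, build an exact unital copy of $M_n\otimes e_{1_G}Ae_{1_G}$ whose diagonal is the Rokhlin family $\{e_g\}_{g\in G}$, and verify that the compressions $e_g a_k u_k e_{k^{-1}g}$ are within $O(\ep')$ of the image of that copy. Your explicit construction of the matrix units from $v_g=u_g f$ with a polar-decomposition correction $z_g$ is simply a hands-on implementation of the perturbation step that the paper delegates to the choice of $\dt$ in Lemma \ref{matrix}, so the two arguments differ only in bookkeeping, not in substance.
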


$\mathbb{Proof of Theorem \ref{SRP-TR}}\colon$ The property of having traical rank less or equal to $k$ passes to unital hereditary subalgebra, matrix algebra and sub-quotients, by Proposition 5.1, Theorem 5.3 and Theorem 5.8 of \cite{Lin-TR1}. Hence Proposition \ref{SRP-C} together with Proposition 4.8, proves our Theorem \ref{SRP-TR}.\newline

In Theorem \ref{Main}, the assumption of Property (SP) on $A$ is used to ensure that the crossed product has Property (SP). The proof of this fact depends on the following theorem:

\begin{thm}\label{O1}
(Theorem 2.1 of \cite{Os1}) Let $1\in A\subset B$ be a pair of C*-algebras. Suppose that $A$ has Property (SP). If there is a faithful conditional expectation $\E$ from $B$ to $A$, such that for any non-zero positive element $x$ in $B$ and any $\ep>0$, there is an element $y$ in $A$ satisfying: 
\begin{equation*}
\|y^*(x-\E(x))y\|<\ep, \quad \|y^*\E(x)y\|\geq \|\E(x)\|-\ep,
\end{equation*}
then $B$ also has Property (SP). Moreover, every non-zero hereditary \ca\ of $B$ has a projection which is equivalent to some projection in $A$ in the sense of Murray-von Neumann.
\end{thm}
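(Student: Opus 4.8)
The plan is to prove the stronger ``moreover'' assertion, since Property (SP) for $B$ is an immediate consequence of it: it suffices to show that every non-zero \hsa\ $D\subseteq B$ contains a non-zero projection that is \mvnt\ to a projection in $A$. So fix such a $D$ and choose a non-zero positive element $x\in D$. Because $\E$ is faithful and $x\neq 0$ we have $\E(x)\neq 0$, so after rescaling $x$ (which keeps it in $D$) I may assume $\|\E(x)\|=1$. Now fix a small $\ep>0$ and apply the hypothesis to $x$ to obtain $y\in A$ with $\|y^*(x-\E(x))y\|<\ep$ and $\|y^*\E(x)y\|\geq 1-\ep$. Set $c=y^*\E(x)y\in A_{+}$ and $b=y^*xy\in B_{+}$; then $\|b-c\|<\ep$ and $\lambda:=\|c\|\geq 1-\ep$, so in particular $\lambda$ is bounded away from $0$.

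Next I would manufacture a projection in $A$ sitting near the top of the spectrum of $c$. Choosing a cut-down function $g$ that equals $1$ near $\lambda$ and vanishes below $\lambda-\dt$, the element $g(c)$ is non-zero, and since $A$ has Property (SP) the \hsa\ $\overline{g(c)Ag(c)}$ contains a non-zero projection $p\in A$. Using a second function equal to $1$ on the support of $g$, one checks that $\|cp-\lambda p\|\leq 2\dt$. Combining this with $\|b-c\|<\ep$ gives $\|bp-\lambda p\|<\ep+2\dt$ and hence $\|pbp-\lambda p\|<\ep+2\dt$, so $\lambda^{-1}pbp$ is an approximate projection lying in $\Her(b)=\overline{bBb}$. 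For $\ep,\dt$ small enough (here $\lambda\geq 1-\ep$ is used) there is a genuine projection $q\in\overline{bBb}$ with $\|q-p\|<1$, whence $q$ is \mvnt\ to $p$.

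Finally I would transport $q$ into $D$. Writing $z=x^{1/2}y$ we have $b=z^*z$ and $zz^*=x^{1/2}yy^*x^{1/2}\leq\|y\|^2x$; since $x\in D$ and $D$ is hereditary, $zz^*\in D$ and therefore $\Her(zz^*)\subseteq D$. The Cuntz isomorphism $\Her(z^*z)\cong\Her(zz^*)$ (see p.~218 of \cite{Cu1}) carries the projection $q\in\Her(z^*z)=\overline{bBb}$ to a \mvnt\ projection $q'\in\Her(zz^*)\subseteq D$. Then $q'$ is a non-zero projection in $D$ which is \mvnt\ to $q$, and hence to the projection $p\in A$, as required.

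I expect the main obstacle to be the spectral bookkeeping of the middle paragraph: one must choose the cut-down functions and the thresholds so that $pbp$ is close enough to a scalar multiple of $p$ to be perturbed to an honest projection, while simultaneously guaranteeing $q\neq 0$. This is exactly where the second hypothesis inequality $\|y^*\E(x)y\|\geq\|\E(x)\|-\ep$ is indispensable, as it keeps $\lambda=\|c\|$ (and thus the resulting projection) bounded away from zero; the first inequality only supplies the approximation $\|b-c\|<\ep$.
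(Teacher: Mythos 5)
The paper offers no proof of this statement (it is quoted verbatim as Theorem 2.1 of \cite{Os1}), so your proposal has to be judged against the standard argument it reconstructs, and in outline you reconstruct it faithfully: use faithfulness of $\E$ to normalize $\|\E(x)\|=1$, produce $b=y^*xy$ close to $c=y^*\E(x)y\in A_+$ with $\lambda=\|c\|\geq 1-\ep$, cut down the top of the spectrum of $c$ and invoke Property (SP) of $A$ to get $p$, perturb to a projection in $\Her(b)$, and transport into $D$ via the Cuntz isomorphism $\Her(z^*z)\cong\Her(zz^*)$ with $z=x^{1/2}y$ --- the same device this paper uses in Lemmas \ref{SP-Lim} and \ref{SPtensor}. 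Your closing remark about where $\|y^*\E(x)y\|\geq\|\E(x)\|-\ep$ is indispensable is also accurate. However, there is one concretely false step in the middle paragraph: the claim that $\lambda^{-1}pbp$ lies in $\Her(b)=\overline{bBb}$. The element $pbp$ lies in the corner $pBp$, not in $\overline{bBb}$: with $B=M_2$, $b=e_{11}$, and $p$ a rank-one projection not commuting with $e_{11}$, one computes $pbp=\frac12 p\notin \C e_{11}=\Her(b)$. As written, the perturbation step produces a projection near $p$ inside $pBp$, which is useless for the subsequent transport into $\Her(zz^*)$.

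The gap is repairable with your own estimates, which is why I would call it a wrong step rather than a wrong approach. From $\|bp-\lambda p\|<\ep+2\dt$ (and its adjoint) one gets
\begin{equation*}
\bigl\|p-\lambda^{-2}bpb\bigr\|\leq \lambda^{-1}\|\lambda p-bp\|+\lambda^{-2}\|b\|\,\|\lambda p-pb\|,
\end{equation*}
and $\lambda^{-2}bpb$ genuinely lies in $bBb\subseteq\Her(b)$; since $\|b\|\leq\lambda+\ep$ and $\lambda\geq 1-\ep$, for $\ep,\dt$ small this positive element is within, say, $1/4$ of the projection $p$, so its spectrum has a gap and functional calculus yields a projection $q\in\Her(b)$ with $\|q-p\|<1$, hence $q\sim p$; the rest of your argument, including the transport of $q$ to $q'\in\Her(zz^*)\subseteq D$, then goes through unchanged. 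Alternatively, and closer to how Lemma \ref{SP-Lim} of this paper handles the identical point (its element $c^*pc$), set $w=zp$: then $w^*w=pbp\approx\lambda p$ while $ww^*=zpz^*\in\Her(zz^*)\subseteq D$ directly, so one can perturb $\lambda^{-1}ww^*$ to a projection already sitting in $D$ and equivalent to a projection within distance one of $p\in A$, bypassing the separate Cuntz-transport step entirely.
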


\begin{prp}\label{SP-cross}
Let $A$ be a \ca\ with Property (SP) and let $\af\colon G\rightarrow Aut(A)$ be an action with the weak tracial Rokhlin property. Then $C^*(A, G, \af)$ also has Porperty (SP). Moreover, every non-zero hereditary \ca\ of  $C^*(A, G, \af)$ has a projection which is equivalent to some projection in $A$ in the sense of Murray-von Neumann.
\end{prp}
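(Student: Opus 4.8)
The plan is to apply Theorem \ref{O1} to the inclusion $A \subset B := C^*(G,A,\af)$. There is a canonical faithful conditional expectation $\E\colon B \to A$, determined by $\E(\sum_{g\in G} a_g u_g) = a_{1_G}$, where $u_g$ are the canonical unitaries and, since $G$ is finite, every element of $B$ has the form $\sum_{g\in G} a_g u_g$ with $a_g\in A$. Once I verify the averaging hypothesis of Theorem \ref{O1}, both conclusions --- that $B$ has Property (SP), and that every nonzero hereditary subalgebra of $B$ contains a projection Murray--von Neumann equivalent to a projection of $A$ --- follow at once.

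Thus the real task is to show: for every nonzero positive $x = \sum_{g\in G} a_g u_g \in B$ and every $\ep>0$, there is $y\in A$ with $\|y^*(x-\E(x))y\|<\ep$ and $\|y^*\E(x)y\|\ge \|\E(x)\|-\ep$. Write $c = \E(x) = a_{1_G}$; since $\E$ is faithful and $x\ge 0$ is nonzero, $c$ is a nonzero positive element of $A$, and $x - \E(x) = \sum_{g\ne 1_G} a_g u_g$ exactly. I would invoke the weak tracial Rokhlin property (Definition \ref{wTRP}) with finite set $F = \{a_g : g\in G\}\cup\{c\}$, with norm-condition element $b = c/\|c\|$, with any full positive element (say $1$; the comparison condition is not needed here), and with a tolerance $\dt$ to be fixed small relative to $\ep$, $\|c\|$, $\card(G)$, and $M=\max_g\|a_g\|$. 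This yields mutually orthogonal projections $e_g$ with $e=\sum_g e_g$ satisfying (1) $\|\af_g(e_h)-e_{gh}\|<\dt$, (2) approximate commutation with $F$, and (4) $\|ebe\|>1-\dt$.

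The candidate for $y$ is a single Rokhlin projection $e_{g_0}$. Using (2) with $c\in F$ and the orthogonality of the $e_g$, one has $ece \approx \sum_g e_g c e_g$ up to an error of order $\card(G)^2\dt$; since the summands lie in the mutually orthogonal corners $e_g A e_g$, the norm of the sum equals $\max_g\|e_g c e_g\|$. Combining this with $\|ece\|=\|c\|\,\|ebe\|>\|c\|(1-\dt)$ produces some $g_0$ with $\|e_{g_0} c e_{g_0}\|>\|c\|-\ep$, giving the second inequality for $y=e_{g_0}$. For the first inequality I would expand $y^*(x-\E(x))y = \sum_{g\ne 1_G} e_{g_0}\, a_g\, \af_g(e_{g_0})\, u_g$ using $u_g e_{g_0} = \af_g(e_{g_0}) u_g$; pulling $a_g$ past $e_{g_0}$ by (2) and replacing $\af_g(e_{g_0})$ by $e_{g g_0}$ via (1), each term is governed by $\|e_{g_0} e_{g g_0}\|=0$ for $g\ne 1_G$, so the whole sum has norm of order $\card(G)(1+M)\dt<\ep$.

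The main obstacle is exactly the tension in the choice of $y$: annihilating the off-diagonal part $x-\E(x)$ wants $y$ to be a \emph{single} Rokhlin projection, so that $y\,\af_g(y)\approx 0$ for $g\ne 1_G$, while the norm condition (4) only controls the \emph{full} sum $e=\sum_g e_g$ and says nothing about an individual $e_g$. Reconciling these --- proving that one corner $e_{g_0} A e_{g_0}$ already captures almost all of the norm of $\E(x)$ --- is the crux, and it is precisely the max-over-orthogonal-corners estimate above that bridges the gap. Everything else is routine once $\dt$ is taken small enough.
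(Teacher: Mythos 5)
Your proposal is correct and follows essentially the same route as the paper's proof: the canonical conditional expectation $\E(\sum_g a_g u_g)=a_{1_G}$, verification of the hypothesis of Theorem \ref{O1}, and the choice of a single Rokhlin projection $y=e_{g_0}$ selected by the max-over-orthogonal-corners estimate $\|\sum_g e_g c e_g\|=\max_g\|e_g c e_g\|$, with the off-diagonal part killed by $e_{g_0}e_{gg_0}=0$. Your bookkeeping is in fact slightly more careful than the paper's (which silently absorbs the $\|\af_g(e_h)-e_{gh}\|<\dt$ substitution error), and you correctly note, as the paper does, that the comparison condition of the weak tracial Rokhlin property is never used.
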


\begin{proof} 
Let $B$ be the crossed product $C^*(A, G, \af)$. Let $\{u_g\}$ be the canonical unitaries. Then every element of $B$ can be written as $\sum_{g\in G}b_gu_g$, where $b_g\in A, \forall g\in G$. Let $1$ be the identity element of $G$, then a natural faithful expectation $\E$ from $B$ to $A$ can be defined as $\E(\sum_{g\in G}b_gu_g)=b_1$. 

Now we check that $\E$ satisfy the conditions in Theorem \ref{O1}. Let $x=\sum_{g\in G}b_gu_g$ be a non-zero positive element of $B$. Then $b_1=\E(x)$ must be a non-zero positive element of $A$: write $x=zz^*$ where $z=\sum_{g\in G}c_gu_g$, then $b_1=\sum_{g\in G}c_gc_g^*\neq 0$. Without loss of generality, we can assume that $b_1$ has norm 1.  Since $\af$ has the weak tracial Rokhlin property, Let $F=\{b_g\mid g\in G\}$, $b_1$ be the positive element of $A$ with norm 1 and $\dt=\frac{\ep}{(|G|^2-|G|+1)}$, we can find mutually orthogonal projections $e_g\in A$ for $g\in G$  such that:
\begin{enumerate}[(1)]
\item
$\|\af_g(e_h)-e_{gh}\|<\dt$ for all $g, h\in G$.
\item
$\|e_g a-ae_g\|<\dt$ for all $g\in G$ and all $a\in F$. 
\item
With $e=\sum_{g\in G}e_g$, $\|eb_1e\|>1-\dt$.
\end{enumerate}
Since $\|eb_1-b_1e\|<\dt$, we see that:
\begin{equation*}
\|eb_1e-\sum_{g\in G}e_gb_1e_g)\|=\|\sum_{g\neq h}e_gb_1e_h\|\leq (|G|^2-|G|)\dt+\|\sum_{g\neq h}e_ge_hb_1\|=(|G|^2-|G|)\dt. 
\end{equation*}
Note that since $e_gb_1e_g$ for $g\in G$ are orthogonal to each other, we have 
\begin{equation*}
\|\sum_{g\in G}e_gb_1e_g\|=\max\{\|e_gb_1e_g\|\mid g\in G\}.
\end{equation*}
Hence there exists some $h\in G$, such that 
\begin{equation*}
\|e_hb_1e_h\|=\|\sum_{g\in G}e_gb_1e_g\|>\|eb_1e\|-(|G|^2-|G|)dt>1-\ep .
\end{equation*}
Let $y=e_h$, then $\|y\E(x)y\|>1-\ep$. Also we have:
\begin{align*}
\|y*(x-\E(x))y\|\\
&=\|e_h(\sum_{g\neq 1}b_gu_g)e_h\|\\
&=\|(\sum_{g\neq 1}e_hb_g\af_g(e_h)u_g\|\\
&\leq  (\sum_{g\neq 1}\|e_hb_ge_{gh}\|\\
&\leq (\sum_{g\neq 1}(\|(e_hb_g-b_ge_h)e_{gh}\|+\|b_ge_he_{gh}\|)\\
&\leq (|G|-1)(\dt+0)<\ep
\end{align*} 
Hence the conditions in Theorem ~\ref{O1} are met.Note that the comparison condition is not used in the above proof.
\end{proof}

We can extract the following technical lemma from the proof of Theorem 2.2, \cite{Ph-F}:
\begin{lem}\label{matrix}
Let $\af\colon G\rightarrow \Aut(A)$ be a finite group action. Let $F$ be a finite subset of $A$ and let $u_g$, $g\in G$ be the canonical unitaries implementing the action. Set $n=\card(G)$. For any $\ep>0$, there exist $\dt>0$ such that for any family of \mops\ $e_g\in A$, for $g\in G$ with:
\begin{enumerate}[(1)]
\item
$\|\af_g(e_h)-e_{gh}\|<\dt$,
\item
$\|e_ga-ae_g\|<\dt$ for any $a\in F$ and 
\item
$e=\sum_{g\in G}e_g$ is $\af-$invariant, 
\end{enumerate}

then there exists a unital homomorphism $\phi_0\colon M_n\rightarrow C^*(G,A,\af)$ such that $\phi_0(v_{g,g})=e_g$, where $v_{g,h}$ is the standard (g,h)- matrix units of $M_n$. Furthermore, let $1=1_G$ be the identity of $G$, if we define an unital homomorphism $\phi\colon M_n\otimes e_1Ae_1\rightarrow eC^*(G,A,\af)e$ by:
\begin{equation*}
 \phi(v_{g,h}\otimes a)=\phi_0(v_{g,1}a\phi_0(v_{1, h}),\quad \text{for} g, h\in G \quad text{and} \quad a\in e_1Ae_1.  
\end{equation*}
There is a finite subset $T$ of $M_n\otimes e_1Ae_1$ such that for every $a\in F\cup\{u_g\mid g\in G\}$, there is some $b\in T$ such that $\|\phi(b)-eae\|<\ep$. 
\end{lem}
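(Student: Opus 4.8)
The plan is to realize $\phi$ by first manufacturing a genuine system of $n\times n$ matrix units inside $C^*(G,A,\af)$ out of the approximately permuted projections $e_g$ and the canonical unitaries $u_g$, and then to recognize each $eae$ and $eu_ke$ as an element close to this matrix-unit picture. The natural candidate partial isometries are $x_{g,h}=e_g u_{gh^{-1}} e_h$. Since $u_g a u_g^*=\af_g(a)$, condition (1) gives $u_{gh^{-1}}e_h=\af_{gh^{-1}}(e_h)u_{gh^{-1}}\approx e_g u_{gh^{-1}}$, from which one checks the matrix-unit relations up to $O(\dt)$: we have $x_{g,g}=e_g$ and $x_{g,h}^*=x_{h,g}$ exactly, $x_{g,h}x_{h',k}=0$ for $h\neq h'$ (because $e_h e_{h'}=0$), and $x_{g,h}x_{h,k}\approx x_{g,k}$.

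The technical heart is to pass from these approximate matrix units to exact ones without disturbing the diagonal projections $e_g$. For each $g$ set $x_{g,1}=e_g u_g e_1$; condition (1) yields $\|x_{g,1}^*x_{g,1}-e_1\|=O(\dt)$ and $\|x_{g,1}x_{g,1}^*-e_g\|=O(\dt)$, with $x_{g,1}^*x_{g,1}\in e_1Ae_1$. For $\dt<1$ this element is invertible in the corner $e_1Ae_1$, so the polar-type formula $w_{g,1}=x_{g,1}(x_{g,1}^*x_{g,1})^{-1/2}$ produces an element with $w_{g,1}^*w_{g,1}=e_1$ exactly and $\|w_{g,1}-x_{g,1}\|=O(\dt)$. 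Because $w_{g,1}\in e_g C^*(G,A,\af)e_1$, the projection $w_{g,1}w_{g,1}^*$ lies in the corner $e_g C^*(G,A,\af)e_g$ and is within distance $<1$ of its unit $e_g$, which forces $w_{g,1}w_{g,1}^*=e_g$. Setting $w_{1,1}=e_1$ and $w_{g,h}=w_{g,1}w_{h,1}^*$ then gives an honest system of matrix units with $w_{g,g}=e_g$, and I would define $\phi_0(v_{g,h})=w_{g,h}$ and $\phi(v_{g,h}\otimes a)=w_{g,1}aw_{h,1}^*$. Standard matrix-unit bookkeeping, using $w_{h,1}^*w_{g',1}=\dt_{h,g'}e_1$ and $a\in e_1Ae_1$, shows $\phi$ is a $*$-homomorphism that is unital onto $eC^*(G,A,\af)e$.

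Finally I would exhibit the approximants with their error bounds. For $a\in F$, condition (2) makes the off-diagonal pieces $e_gae_h$ ($g\neq h$) negligible, so $eae\approx\sum_g e_gae_g$; taking $b=\sum_g v_{g,g}\otimes e_1\af_{g^{-1}}(a)e_1$ and using $w_{g,1}(\cdot)w_{g,1}^*\approx e_g\af_g(\cdot)e_g$ together with $\af_g(e_1)\approx e_g$ gives $\|\phi(b)-eae\|=O(\dt)$. For $a=u_k$, condition (1) shows only the terms $e_{kh}u_ke_h$ survive, and a direct computation using $u_{kh}e_1u_h^*=u_k\af_h(e_1)\approx u_ke_h$ gives $w_{kh,h}\approx e_{kh}u_ke_h$, whence $eu_ke\approx\sum_h w_{kh,h}=\phi\big((\sum_h v_{kh,h})\otimes e_1\big)$; here the $\af$-invariance of $e$ from condition (3), equivalently $u_ke=eu_k$, keeps the target inside $eC^*(G,A,\af)e$ and tidies the bookkeeping, though the underlying estimates use only (1) and (2). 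Collecting these into a finite set $T\subset M_n\otimes e_1Ae_1$ and noting that every estimate is a fixed multiple of $\dt$ (the multiple depending only on $n$, $F$, and $\max_{a\in F}\|a\|$), I would choose $\dt$ at the outset so that all errors fall below $\ep$. The \textbf{main obstacle} is the second step: controlling the perturbation from approximate to exact matrix units with norm estimates linear in $\dt$ while pinning the diagonal entries to the prescribed projections $e_g$.
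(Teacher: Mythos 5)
Your proposal is correct and is essentially the argument the paper relies on: the paper gives no independent proof but extracts the lemma from the proof of Theorem 2.2 of Phillips' paper \cite{Ph-F}, which proceeds exactly as you do — set $x_{g,1}=e_gu_ge_1$, use the polar-type correction $w_{g,1}=x_{g,1}(x_{g,1}^*x_{g,1})^{-1/2}$ in the corner to get exact matrix units whose diagonal is pinned to the $e_g$ (your forcing argument via $\|e_g-w_{g,1}w_{g,1}^*\|<1$ with $e_g-w_{g,1}w_{g,1}^*$ a projection is the standard way to do this), and then approximate $eae$ by $\sum_g w_{g,1}\bigl(e_1\af_{g^{-1}}(a)e_1\bigr)w_{g,1}^*$ and $eu_ke$ by $\sum_h w_{kh,h}$. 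Your side remark that the norm estimates need only conditions (1) and (2), with the $\af$-invariance (3) just keeping $eu_ke$ cleanly inside the corner, is also accurate.
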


$\mathbb{Proof of Theorem \ref{Main}\colon}$ The proof is actually a modification of Theorem 2.6 of \cite{Ph-F}. Let $B=C^*(A, G, \af)$. Since the action is $\af$-simple, $B$ is simple by Lemma \ref{simple}. Note that for simple C*-algebras, tracial topological rank can be simplified as in Theorem \ref{TR-S}. By Lemma \ref{SP-SRP2} and Theorem \ref{SRP-TR}, we can assume that $A$ has Property (SP).\newline

Let $S$ be a finite subset of $B$. \Wolog\ we may assume that $S$ is of the form $F\cup \{u_g\colon g\in G\}$, where $F$ is a finite subset of the unit ball of $A$ and $u_g\in B$ are the canonical unitaries implementing the automorphism $\af_g$. Let $\ep>0$ be a positive number, and let $x$ be a nonzero positive element of $B$.\\
Set $n=|G|$, By Lemma \ref{SP-cross}, $B$ has Property (SP). Also $B$ is non-elementary since it's infinite dimensional unital simple. By Lemma \ref{SP2}, we can find $2n$ non-zero mutually orthogonal and equivalent projections $p_1, p_2, \cdots, p_{2n}$ in $\Her(x)$. Then by Lemma \ref{SP-cross} again, we can find a projection $p'\in A$ such that $p'\pa p_1$.\newline

Now we are going to find a non-zero sub-projection $p$ of $p'$ so that $\af_g(p)\pa p_1$ in $B$, for every $g\in G$. List the elements of $G$ as $g_1,g_2,\cdots,g_n$. Let $f_0\leq p_1$ such that $p'\sim f_0$.  Since $B$ is simple, by Lemma 3.5.6 of \cite{Lin-C}, there exists non-zero projections $f_1'\leq \af_{g_1}(p')$ and $f_1\leq f_0\leq p_1$ such that $f_1'\sim f_2$. Inductively, for $i=1,2,\dots,n$, we can find non-zero projections $f_i$ and $f_i'$, such that：
\begin{equation*}
f_i'\leq \af_{g_ig_{i-1}^{-1}}(f_{i-1}')\leq \af_{g_i}(p'),\quad f_i\leq f_{i-1} \quad\text{and}\quad f_i'\sim f_i.
\end{equation*}
Let $p=\af_{g_n^{-1}}f_n'$. Then $p$ is a non-zero subprojection of $p'$ such that $\af_{g_i}(p)\pa p_1$, for any $i$. \newline 

Since $B$ is simple, there exists $s_1,s_2,\dots, s_n$ in $B$ such that $\sum_{i=1}^ns_ips_i^*=1$. Write $s_i=\sum_{g\in G}s_{i,g}u_g$. Do a computation we can show that 
\begin{equation*}
\sum_{i=1}^n\sum_{g\in G}s_{i,g}\af_g(p)s_{i,g}^*=1
\end{equation*}
Set $\hat{p}=\sum_{g\in G}\af_g(p)$, then 
\begin{equation*}
\sum_{g,i}s_{i,g}\hat{p} s_{i,g}^*>\sum_{g,i} s_{i,g}\af_g(p)s_{i,g}^*=1
\end{equation*}
Hence $\hat{p}$ is full in $A$.  Therefore there exists $\{z_i | 1\leq i \leq m\}\subset A$ such that $\sum_{i=1}^{m}z_ipz_i^*=1$. Let $M=\max\{\|z_i\|\mid 1\leq i \leq m\}$. Set $\ep_0=\ep/4$, then choose $\dt_0>0$ according to Lemma \ref{matrix} for $n$ as given and for $\ep_0$ in place of $\ep$. Let
\begin{equation*}
\dt=\min \{\frac{1}{2nmM},\; \dt_0,\; \frac{\ep}{8n}\}. 
\end{equation*}
Let $F'=F\cup \{z_i | 1\leq i \leq m\}$ be a finite subset of $A$, $\hat{p}$ be a full positive element of $A$ and $\dt>0$. By Lemma \ref{wTRP-inv}, we can obtain \mops\ $e_g$ in $A$ for $g\in G$, such that:
\begin{enumerate}[(1)]
\item
$\|\af_g(e_h)-e_{gh}\|<\dt<\dt_0$ for all $g, h\in G$.
\item
$\|e_g a-ae_g\|<\dt<\dt_0$ for all $g\in G$ and all $a\in F'$
\item
Let $e=\sum_{g\in G}e_g$, $e$ is $\af-$invariant.
\item
$1-e\pa \hat{p}$ 
\end{enumerate}
Let $v_{g,h}$ be the standard matrix units for $M_n$. By the choice of $\dt$, there exists a unital homomorphism $\phi_0\colon M_n\rightarrow eBe$ such that $\phi_0(v_{g,g})=e_g$ for all $g\in G$. Furthermore,if we define a unital homomorphism $\phi\colon M_n\otimes e_1Ae_1\rightarrow eBe$ by $\phi(v_{g,h}\otimes a)=\phi_0(v_{g,1}a\phi_0(v_{1, h})$ for $g, h\in G$ and $a\in e_1Ae_1$, then there is a finite subset $T$ of $M_n\otimes e_1Ae_1$ such that for every $a\in F'\cup{u_g\mid g\in G}$, there is $b\in T$ such that $\|\phi(b)-eae\|\leq \ep_0$.\newline

Now $e_1Ae_1$ is an hereditary C*-subalgebra of $A$ and $TR(A)\leq k$, we have $TR(M_n\otimes e_1Ae_1)\leq k$, by Theorem 5.3 and Theorem 5.8 of \cite{Lin-TR1}. In particular, $TR_w(M_n\otimes e_{g_0}Ae_{g_0})\leq k$. ($TR_w(\cdot)$ is the tracial weak rank, see Definition 3.4 of \cite{Lin-TR1} for the definition; Corollary 5.7 of \cite{Lin-TR1} says that $TR(A)\leq k$ implies $TR_w(A)\leq k$ )   \newline
Now consider the element $r=e_{11}\otimes e_{1}\hat{p}e_{1}$. Since $\sum_{i=1}^nz_i\hat{p}z_i^*=1$, using the fact $\|e_1z_i-z_ie_1\|<\dt$, we see that:
\begin{align*}
&\|e_1-\sum_{i=1}^me_1z_ie_1\hat{p}e_1z_i^*e_1\|\\
=&\|\sum_{i=1}^me_1z_i\hat{p}z_i^*e_1-\sum_{i=1}^me_1z_ie_1\hat{p}e_1z_i^*e_1\|\\
\leq& \|\sum_i(e_1(e_1z_i-z_ie_1)\hat{p})z_i^*e_1\|+\|\sum_i(e_1z_ie_1)\hat{p})(z_i^*e_1-e_1z_i^*)e_1\|\\
\leq& mnM\dt+mnM\dt<1
\end{align*}
This shows that the ideal generated by $e_1\hat{p}e_1$ in $e_1Ae_1$ contains an invertible element, hence $e_{1}\hat{p}e_{1}$ is full in $e_{1}Ae_{1}$. Therefore $r$ is a full element of $M_n\otimes e_{1}Ae_{1}$. By the definition of tracial weak rank,  there is a projection $p_0\in M_n\otimes e_1Ae_1$ and an $E_0\in \mathscr{I}^{(k)}$ with $1_{E_0}=p_0$ such that
\begin{enumerate}
\item
$\|p_0b-bp_0\|<\ep_0$ for all $b\in T.$
\item
For every element $b\in T$, there is an element $b'\in E_0$ such that $\|p_0bp_0-b'\|<\ep_0$
\item
$1-p_0\pa r $.
\end{enumerate}

Set $q=\phi(p_0)$ and $E=\phi(E_0)$. Note that the identity of $E$ is equal to $e$, the sum of the Rokhlin projections. Since $E$ is isomorphic to a sub-quotient of $E_0$ and $E_0\in \mathscr{I}^k$, by the same argument as in Proposition 5.1 of ~\cite{Lin-TR1}, there exists an increasing sequence of C*-algebras $C_i$, such that the union $\cup_{i=1}^{\infty}C_i$ is dense in $E$. Therefore, we can choose some large $i=N$, such that $1_{C_N}=1_E$ and for every $b\in T$, there is an element $b'\in E_0$ and a $b''$ in $C_N$ so that $\|p_0bb_0-b'\|<\ep_0$ and $\|\phi(b')-b''\|<\ep_0$.\newline
 
Let  $a\in S$. Choose $b\in T$ such that $\|\phi(b)-eae\|<\ep_0$. Then, using $qe=eq=q$,
\begin{align*}
\|qa-aq\| &\leq 2\|ea-ae\|+\|qeae-eaeq\|\\
&\leq 2\|ea-ae\|+2\|eae-\phi(b)\|+\|p_0b-bp_0\|\\
&<2n\dt+2\ep_0+\ep_0\leq\ep
\end{align*}

Further, choosing $b'\in E_0$ such that $\|p_0bp_0-b'\|<\ep_0$, and then choose $b''$ in $C_N$ such that $\|\phi(b')-b''\|<\ep_0$. then the element $b''\in C_N$ satisfies
\begin{align*}
\|qaq-b''\|&\leq \|qaq-q\phi(b)q\|+\|q\phi(b)q-\phi(b')\|+\|\phi(b')-b''\|\\
&\leq \|eae-\phi(b)\|+\|\phi(p_0bp_0-b')\|+\|\phi(b')-b''\|\\
&\leq \ep_0+\ep_0+\ep_0\leq \ep.\\
\end{align*}

Finally, for the comparison condition, since $\phi(r)=e_1\hat{p}e_1$:
\begin{align*}
1-q&=(1-e)+(e-q)\pa hat{p}\oplus\phi{1-p_0}\\
&\pa hat{p}\oplus e_1\hat{p}e_1\\
&\pa\oplus_{g\in G}\af_g(p)\oplus_{g\in G}\af_g(p)\\
&\pa p_1\oplus p_2\oplus\cdots\oplus p_2n\pa x.
\end{align*}
Hence $B=C^*(G,A,\af)$ has tracial rank less or equal to $k$, by Theorem \ref{TR}.

\begin{rmk}
Actually we could replace tracial rank by weak tracial rank in Theorem \ref{Main} if $TR_w(eAe)\leq TR_w(A)$, for any projection $e\in A$. But unfortunately this is not true in general. See Example 4.7 of \cite{Lin-TR1} for a counterexample. We can also see that the norm condition is not officially used in the proof of Theorem \ref{Main}. It is used in Proposition \ref{SP-cross} which is an essential ingredient of the proof. It's possible to find a weak condition so that Proposition \ref{SP-cross} still holds, in which case Theorem \ref{Main} is still valid.
\end{rmk}
\section{Weak tracial Rokhlin property and tracial approximation}
In this section, we assume that all classes of C*-algebras that we consider has the property that, if $A\cong B$ and  $A$ belongs to the class, then so is $B$. Following the spirit of tracially AF algebras, Elliott and Niu made the following definition of tracial approximation:
\begin{dfn}\label{TA}
(Definition 2.2, \cite{EN-TA}) Let $\mathscr{I}$ be a class of C*-algebras. A unital C*-algebra $A$ is said to be in the class $\TA\mathscr{I}$, \ifo\ for any $\ep>0$, any finite subset $F$ of $A$, and any nonzero $a\in A^+$, there exist a non-zero projection $p\in A$ and a sub C*-algebra $C\subset A$ such that $C\in \mathscr{I}$, $1_C=p$, and for all $x\in F$, 
\begin{enumerate}
\item
$\|xp-px\|<\ep$,
\item
$pxp\in_{\ep} C$, and
\item
$1-p\pa a$
\end{enumerate}  
\end{dfn}

By the same argument as in Example \ref{wTRP-exa}, we can see that if $A, B$ are two C*-algebras such that $A\oplus B\in \TA\mathscr{I}$, then both $A$ and $B$ are in $\mathscr{I}$. Hence the comparison condition in Definition \ref{TA} may be too strong for non-simple C*-algebras. We could make the following alternative definition:
\begin{dfn}
(Weak tracial approximation) Let $\mathscr{I}$ be a class of C*-algebras. Then we define $w\TA\mathscr{I}$ to be the class of C*-algebra $A$ obtained the same way as in Definition \ref{TA} with the additional requirement that the positive element $a$ be full. 
\end{dfn}
The class $w\TA\mathscr{I}$ properly contains $\TA\mathscr{I}$, and it contains more C*-algebras of interest. But in contrast with $\TA\mathscr{I}$, even if the class $\mathscr{I}$ is closed under taking hereditary sub-algebra, $w\TA\mathscr{I}$ may not have the same property. Hence we need make this assumption in the following proposition:
\begin{thm}\label{wTRP-Main}
Let $A$ be an infinite dimensional unital C*-algebra with Property (SP). Let $G$ be a finite group. Let $\af\colon G\rightarrow \Aut(A)$ be an action with the weak tracial Rokhlin property such that the crossed product $C^*(G,A,\af)$ is simple. Suppose $A$ belongs to a sub-class $\mathscr{I}'$ of $w\TA\mathscr{I}$, for some class of C*-algebras $\mathscr{I}$. If $\mathscr{I}'$ is closed under taking hereditary sub-algebras and tensoring with matrix algebras, then $C^*(G, A, \af)$ belongs to $\TA\mathscr{I}$. In particular, if $A$ belongs to $\TA\mathscr{I}$, and $\mathscr{I}$ is closed under taking hereditary subalgebras and tensoring with matrix algebras, then $C^*(G,A,\af)$ belongs to $\TA\mathscr{I}$.
\end{thm}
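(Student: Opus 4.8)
The plan is to run the proof of Theorem \ref{Main} essentially verbatim, substituting the abstract class $\mathscr{I}$ and the defining property of $w\TA\mathscr{I}$ for the tracial-rank machinery and the building blocks $\mathscr{I}^{(k)}$. Two preliminary reductions are free here: since $C^*(G,A,\af)$ is assumed simple, Lemma \ref{simple} shows $A$ is $\af$-simple, and Property (SP) is part of the hypotheses, so---unlike in Theorem \ref{Main}---I need neither Lemma \ref{SP-SRP2} nor the strict-Rokhlin fallback of Theorem \ref{SRP-TR}. Write $B=C^*(G,A,\af)$, set $n=\card(G)$, and fix a finite set $S=F\cup\{u_g:g\in G\}$ (with $F$ in the unit ball of $A$), a tolerance $\ep>0$, and a nonzero positive $x\in B$; the goal is to produce a projection $q\in B$ and a subalgebra $E\in\mathscr{I}$ with $1_E=q$ verifying the three conditions of Definition \ref{TA}.

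First I would carry out the geometric preparation exactly as in Theorem \ref{Main}. Using that $B$ is simple, non-elementary, and (by Proposition \ref{SP-cross}) has Property (SP), I extract $2n$ mutually orthogonal equivalent projections $p_1,\dots,p_{2n}\in\Her(x)$, pull one back to $p'\in A$ with $p'\pa p_1$, shrink it to $p$ with $\af_g(p)\pa p_1$ for every $g$ by the inductive argument there, and form the invariant $\hat{p}=\sum_{g\in G}\af_g(p)$, which is full in $A$. After choosing $z_1,\dots,z_m\in A$ with $\sum_i z_i\hat{p}z_i^*=1$, setting $M=\max_i\|z_i\|$, $\ep_0=\ep/4$, $\dt_0$ from Lemma \ref{matrix}, and $\dt=\min\{(2nmM)^{-1},\dt_0,\ep/(8n)\}$, I apply the weak tracial Rokhlin property in its invariant form (Lemma \ref{wTRP-inv}) to $F'=F\cup\{z_i\}$, the full element $\hat{p}$, and $\dt$, obtaining invariant Rokhlin data $e_g$ with $e=\sum_g e_g$ and $1-e\pa\hat{p}$; Lemma \ref{matrix} then yields the unital homomorphisms $\phi_0\colon M_n\to eBe$ and $\phi\colon M_n\otimes e_1Ae_1\to eBe$ together with the finite set $T$ approximating $\{eae:a\in S\}$.

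The one place where the abstraction enters is the production of the building block, and this is also where the main subtlety lies. The permanence hypotheses give $e_1Ae_1\in\mathscr{I}'$ (hereditary subalgebra) and hence $M_n\otimes e_1Ae_1\in\mathscr{I}'\subseteq w\TA\mathscr{I}$ (closure under matrix tensoring). The crucial point---and the reason the whole construction of $\hat{p}$ is arranged as above---is that $w\TA\mathscr{I}$ only supplies an approximating subalgebra relative to a \emph{full} positive element, so I must check that $r=e_{11}\otimes e_1\hat{p}e_1$ is full in $M_n\otimes e_1Ae_1$. I would do this exactly as in Theorem \ref{Main}: the estimate $\|e_1-\sum_i e_1z_ie_1\hat{p}e_1z_i^*e_1\|\le 2nmM\dt<1$ (using $\sum_i z_i\hat{p}z_i^*=1$ and $\|e_1z_i-z_ie_1\|<\dt$) shows $e_1\hat{p}e_1$ is full in $e_1Ae_1$, whence $r$ is full. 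Applying the definition of $w\TA\mathscr{I}$ to $M_n\otimes e_1Ae_1$, the finite set $T$, the tolerance $\ep_0$, and the full element $r$ produces a projection $p_0$ and a subalgebra $E_0\in\mathscr{I}$ with $1_{E_0}=p_0$, $\|p_0b-bp_0\|<\ep_0$ and $p_0bp_0\in_{\ep_0}E_0$ for $b\in T$, and $1-p_0\pa r$. I expect this fullness bookkeeping to be the main obstacle, since it is exactly what forces the ``weak'' form of the approximation and the doubling to $2n$ comparison projections.

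Finally I would transport everything through $\phi$ and check Definition \ref{TA} for $B$ with $q=\phi(p_0)$ and $E=\phi(E_0)$. Here the argument is cleaner than in Theorem \ref{Main}: $\phi$ is injective---because the partial isometries $w_{g,1}=\phi_0(v_{g,1})$ recover each matrix coefficient via $w_{1,k}\phi(X)w_{l,1}=a_{k,l}$---so $E\cong E_0\in\mathscr{I}$, and by the standing assumption that $\mathscr{I}$ is closed under isomorphism I conclude $E\in\mathscr{I}$ directly, with no need for the sub-quotient/inductive-limit detour used for $\mathscr{I}^{(k)}$. The commutator and approximation estimates $\|qa-aq\|<\ep$ and $qaq\in_\ep E$ for $a\in S$ are word-for-word those of Theorem \ref{Main}, and the comparison condition follows from the identical chain $1-q=(1-e)+(e-q)\pa\hat{p}\oplus\phi(1-p_0)\pa\hat{p}\oplus e_1\hat{p}e_1\pa\bigoplus_{g\in G}\af_g(p)\oplus\bigoplus_{g\in G}\af_g(p)\pa p_1\oplus\cdots\oplus p_{2n}\pa x$. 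The \emph{in particular} clause then follows by taking $\mathscr{I}'=\TA\mathscr{I}$: it is contained in $w\TA\mathscr{I}$, and since $\mathscr{I}$ is closed under hereditary subalgebras and matrix tensoring, so is $\TA\mathscr{I}$ by the standard permanence properties of tracial approximation.
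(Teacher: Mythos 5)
Your proposal is correct and follows essentially the same route as the paper: the paper's own proof is precisely a compressed citation of the argument of Theorem \ref{Main} (construction of the full element $\hat{p}$, the Rokhlin projections, the homomorphism $\phi$, fullness of $r=e_{11}\otimes e_1\hat{p}e_1$, injectivity of $\phi_0$ so that $\phi(E_0)\cong E_0\in\mathscr{I}$), which you have simply unpacked in full detail. Your handling of the \emph{in particular} clause --- taking $\mathscr{I}'=\TA\mathscr{I}\subseteq w\TA\mathscr{I}$ and invoking the permanence of $\TA\mathscr{I}$ under hereditary subalgebras and matrix tensoring (Lemma 2.3 of \cite{EN-TA}) --- is also exactly the paper's.
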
 
\begin{proof}
Let $F$ be a finite subset of $C^*(G,A,\af)$, let $a$ be a non-zero element of $C^*(G,A,\af)^+$, and let $\ep>0$. Let $e_g$, $g\in G$ be the Rokhlin projections corresponds to $F$, $a$ and $\ep$. Let $e=\sum_{g\in G}e_g$. From the proof of Theorem \ref{Main}, we can find a unital homomorphism $\phi: M_n\otimes (e_1Ae_1) \rightarrow eC^*(G,A,\af)e$, and a subalgebra $C$ of $M_n\otimes (e_1Ae_1)$ with $1_C=p_0$ which is in the class $\mathscr{I}$, such that:
\begin{enumerate}
\item
$\|x\phi(p_0)-\phi(p_0)x\|<\ep$, for every $x\in F$.
\item
$\phi(p_0)x\phi(p_0)\in_{\ep}\phi(C)$, for every $x\in F$.
\item
$1-\phi(p_0)\pa a$
\end{enumerate}  
It's not hard to see that the homomorphism $\phi_0$ defined in the proof of Theorem \ref{Main} is actually injective if $\dt$ is sufficiently small. Since $\mathscr{I}$ contains isomorphic copies of its member, we see that $C^*(G,A,\af)$ belongs to $\TA\mathscr{I}$. By Lemma 2.3 of \cite{EN-TA}, if $\mathscr{I}$ is closed under taking unital hereditary sub-algebras and tensoring with matrix algebras, so is $\TA\mathscr{I}$, hence the theorem follows.
\end{proof}

As a corollary, we have the following:
\begin{cor}
Let $A$ be an infinite dimensional unital separable C*-algebra. Let $\af\colon G\rightarrow \Aut(A)$ be a finite group action with the weak tracial Rokhlin property such that the crossed product $C^*(G,A,\af)$ is simple. Suppose $A$ has stable rank one, then $C^*(G,A,\af)$ also has stable rank one.
\end{cor}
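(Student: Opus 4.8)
The plan is to deduce the corollary from Theorem \ref{wTRP-Main}, applied with $\mathscr{I}$ taken to be the class of unital C*-algebras of stable rank one, after first separating off the case in which $A$ lacks Property (SP). The first step is to invoke the dichotomy already available: since $C^*(G,A,\af)$ is simple, Lemma \ref{simple} shows $A$ is $\af$-simple, and then Lemma \ref{SP-SRP2} gives that either $A$ has Property (SP) or $\af$ has the strict Rokhlin property. These two cases I would treat separately.

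In the strict Rokhlin case I would argue directly, in parallel with the proof of Theorem \ref{SRP-TR}. For every finite $F\subset C^*(G,A,\af)$ and every $\ep>0$, Proposition \ref{SRP-C} supplies a projection $f\in A$ and a unital homomorphism $\phi\colon M_n(fAf)\to C^*(G,A,\af)$ whose image is $\ep$-dense on $F$; as noted in the proof of Theorem \ref{Main}, and because $M_n$ is simple so that $\phi$ restricts to a genuine embedding of matrix units, $\phi$ is injective. Since $A$ has stable rank one, so does the corner $fAf$ and hence $M_n(fAf)$ by Rieffel's stable rank computations, so $\phi(M_n(fAf))$ is a unital C*-subalgebra of $C^*(G,A,\af)$ of stable rank one. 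Given $b$ and $\ep>0$, I would approximate $b$ within $\ep/2$ by some $c\in\phi(M_n(fAf))$, then $c$ within $\ep/2$ by an element invertible inside $\phi(M_n(fAf))$; because this subalgebra is unital, the inverse lies in it and the element is invertible in $C^*(G,A,\af)$. Thus the invertibles are dense and stable rank one follows.

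In the Property (SP) case I would set $\mathscr{I}=\mathscr{I}'$ equal to the class of unital C*-algebras of stable rank one. Then $A\in\mathscr{I}\subseteq w\TA\mathscr{I}$ (taking $p=1$, $C=A$), and $\mathscr{I}$ is closed under passing to unital hereditary subalgebras and under tensoring with matrix algebras, again by Rieffel's computations of $\tsr$. Theorem \ref{wTRP-Main} then delivers $C^*(G,A,\af)\in\TA\mathscr{I}$, i.e. the crossed product is tracially approximated by algebras of stable rank one, while by hypothesis it is simple.

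The hard part will be the final implication: passing from $C^*(G,A,\af)\in\TA(\text{stable rank one})$, together with simplicity, to stable rank one of the crossed product itself. I would cite the known preservation result that a simple unital C*-algebra tracially approximated by algebras of stable rank one has stable rank one (see \cite{EN-TA}). If a self-contained argument is preferred, one proves density of invertibles: given $x$ and $\ep$, apply Definition \ref{TA} with a suitably small full positive $a$ to obtain $p$ and $C\in\mathscr{I}$ with $1-p\pa a$, approximate $pxp$ by an invertible of the corner $C$, and then absorb the residual corner $(1-p)x(1-p)$ using $1-p\pa a$ and simplicity; this absorption, rather than the assembly of the cited inputs, is the delicate point.
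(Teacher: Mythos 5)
Your proposal is correct and follows essentially the same route as the paper: the dichotomy from Lemma \ref{SP-SRP2} (strict Rokhlin property versus Property (SP)), then Theorem \ref{wTRP-Main} applied with $\mathscr{I}$ the class of unital stable rank one C*-algebras (closed under unital hereditary subalgebras and matrix tensors), and finally Theorem 4.3 of \cite{EN-TA} to pass from simple plus $\TA(\text{stable rank one})$ to stable rank one. The only difference is cosmetic: where the paper simply asserts that strict Rokhlin actions preserve stable rank one, you supply the short argument via Proposition \ref{SRP-C} and density of invertibles, which is exactly the intended justification.
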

\begin{proof}
First of all, the class of stable rank one C*-algebras is preserved by strict Rokhlin actions. Hence by \ref{SP-SRP2}, we may assume that $A$ has Property (SP). By Theorem 3.18 and Theorem 3.19 of \cite{Lin-C}, the class of unital C*-algebras with stable rank one is closed under taking unital hereditary sub-algebras and tensoring with matrix algebras. It follows from Theorem \ref{wTRP-Main} that the crossed product $C^*(G,A,\af)$ is tracially of stable rank 1. By our assumption, $C^*(G,A,\af)$ is simple. By Theorem 4.3 of \cite{EN-TA}, $C^*(G,A,\af)$ actually has stable rank one.
\end{proof}

For real rank, with some modification of Theorem 4.3, we could get the following:
\begin{lem}\label{RR0}
Let $\mathscr{I}$ be the class of unital C*-algebras with real rank 0. Let $A$ be a simple C*-algebra in $\TA\mathscr{I}$, then $A$ has real rank 0.
\end{lem}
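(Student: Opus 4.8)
The plan is to verify the standard characterization that a unital \ca\ has real rank zero exactly when its self-adjoint elements of finite spectrum are dense (equivalently, the invertible self-adjoint elements are dense in the self-adjoint part); see \cite{Lin-C}. So I would fix a self-adjoint $a\in A$, which after rescaling and translating I may assume satisfies $0\leq a\leq 1$, together with $\ep>0$, and aim to produce a self-adjoint $b$ of finite spectrum with $\|a-b\|<\ep$. I first record two facts for later use: every member of $\mathscr{I}$ has Property (SP) (real rank zero trivially forces it, since a nonzero hereditary subalgebra has an approximate unit of projections), and a standard tracial-approximation argument then shows that the simple algebra $A$ itself has Property (SP); this is what will let me manufacture projections in the corners.

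Next I would feed $a$ into the hypothesis $A\in\TA\mathscr{I}$. Choosing a small tolerance $\dt$ and a suitable nonzero positive test element $c\in A_{+}$, Definition \ref{TA} produces a projection $p\in A$ and a subalgebra $C\subseteq A$ with $1_C=p$ and $C\in\mathscr{I}$ (so $\RR(C)=0$) such that $\|ap-pa\|<\dt$, $pap\in_{\dt}C$, and $1-p\pa c$. Writing $a\approx pap+(1-p)a(1-p)$, with the cross terms bounded by the commutator defect, and invoking $\RR(C)=0$ to approximate the self-adjoint element $pap$ (which lies $\dt$-close to $C$) by a finite-spectrum self-adjoint $b_1\in C\subseteq pAp$, I would obtain the bulk of the required approximation on the corner $pAp$.

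The main obstacle is the \emph{defect corner} $(1-p)a(1-p)$. Its norm is only bounded by $\|a\|$ and is not controlled by the comparison $1-p\pa c$, so it can neither be discarded nor made small by a single application of the hypothesis. To handle it I would reduce the whole problem to producing, for each level $\ld$ of a fine partition of $[0,1]$, a projection that is an approximate spectral projection $\chi_{[\ld,1]}(a)$ (almost commuting with $a$); summing these nested projections against the partition levels then assembles the finite-spectrum $b$. Each such spectral projection is built inside the building block $C$, where it exists because $\RR(C)=0$, while its contribution on the defect corner is supplied by iterating the construction: $(1-p)A(1-p)$ is again a simple hereditary subalgebra lying in $\TA\mathscr{I}$ (real rank zero passes to hereditary subalgebras, hence so does $\TA\mathscr{I}$, by Lemma 2.3 of \cite{EN-TA}), and the comparison $1-p\pa c$ together with Property (SP) of $A$ lets the comparison-small leftover be absorbed into an increasing approximate unit of projections. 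This absorption of the corner---replacing the rotation-by-comparison step used for stable rank one in Theorem 4.3 of \cite{EN-TA} with the spectral-projection construction appropriate to real rank zero---is the technical heart of the argument, and the step I expect to require the most care.
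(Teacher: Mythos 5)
Your overall target (density of invertible, or finite--spectrum, self-adjoint elements) is the right one, and your preliminary reductions (Property (SP) for $A$, heredity of $\TA\mathscr{I}$ via Lemma 2.3 of \cite{EN-TA}) match the paper. But the step you yourself flag as the technical heart --- handling the defect corner $(1-p)a(1-p)$ by iterating the tracial approximation and ``absorbing'' the leftover --- is a genuine gap, and I do not believe it can be closed as described. The comparison condition $1-p\pa c$ controls the defect projection only in the sense of Blackadar subequivalence; it gives no norm or spectral control whatsoever on the compression of $a$ to that corner, which at every stage of your iteration still has norm of order $\|a\|$. So the corrections you must make at successive stages do not become small and do not sum; moreover, the approximate spectral projections you build at stage $k$ live in the stage-$k$ building block and almost commute only with the stage-$k$ compression of $a$, so the cross terms between infinitely many stages are exactly as uncontrolled as the quantity you are trying to estimate. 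Nothing in Property (SP) supplies projections that almost commute with $a$, which is what an ``increasing approximate unit of projections'' absorbing the leftover would require.

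The paper's proof avoids iteration entirely by a move your proposal is missing: it exploits the \emph{non-invertibility} of the self-adjoint element $x$ \emph{before} invoking the hypothesis. Since $0\in\mathrm{sp}(x)$, one perturbs $x$ to $f(x)$ with $f=f^{1}_{\sm}$ and uses Property (SP) to extract a nonzero projection $p\in \Her(g(x))$ (with $g$ a bump function at $0$) satisfying $f(x)=(1-p)f(x)(1-p)$; this $p$ is an orthogonal ``reservoir.'' The tracial approximation is then applied inside the corner $(1-p)A(1-p)$, with the test element a projection $q_1\leq 1-p$ chosen (by simplicity, Lemma 3.5.6 of \cite{Lin-C}) equivalent to a subprojection $p_1\leq p$. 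The comparison condition then rotates the single defect projection $1-p-q$ into $r\leq p$ via a partial isometry $v$, and a Schur-complement conjugation (as in Lemma 3.1.5 of \cite{Lin-C}) plus the $\ep$-small off-diagonal kick $(\ep/2)v$ linking the defect corner to $r$ produces an invertible self-adjoint $x'$ with $\|x-x'\|<\ep$ in a \emph{single} application of the definition. You should restructure your argument around this one-shot mechanism: reserve $p$ first, compare against a subprojection of $p$, and trade the uncontrolled defect corner for invertibility via the small partial isometry, rather than attempting a limiting absorption that the comparison relation cannot support.
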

\begin{proof}
Any finite dimensional C*-algebra has real rank 0, hence we may assume that $A$ is infinite dimensional. We may also assume that $A$ has Property (SP). Otherwise $A$ will be locally have real rank 0 and therefore $A$ itself has real rank 0. Let $x$ be a non-zero self-adjoint element in $A$ and let $\ep>0$. Assume that $x$ is not invertible, otherwise there's nothing to prove. We can find a some $\sm>0$ such that $\|f_{\sm}^1(x)-x\|<\ep/2$. We write $f=f_{\sm}^1$. Since $x$ is not invertible, the spectrum of $x$ contains 0. Choose a non-negative continuous function $g$ supported in $[-\sm, \sm]$ such that $g(0)=1$. Then $g(x)$ is non-zero. Since $A$ has Property (SP), there exists a non-zero projection $p$ in $\Her(g(x))$. Also that $A$ is simple, by Lemma 3.5.6 or \cite{Lin-C}, there exist non-zero projections $p_1\leq p$ and $q_1 \leq 1-p$, such that $p_1\sim q_1$. A corner of real rank 0 C*-algebra is again a real rank 0 C*-algebra, hence by Lemme 2.3 of \cite{EN-TA}, $(1-p)A(1-p)$ belongs to $\TA\mathscr{I}$. By the definition of $\TA\mathscr{I}$, there exist a projection $q\in (1-p)A(1-p)$ and C*-subalgebra $C\subset A$ of real rank 0, such that $1_C=q$ and:
\begin{enumerate}[(1)]
\item
$\|qf(x)q-y\|<\ep/4$, for some self-adjoint element $y\in C$.
\item
$1-p-q\pa q_1$.
\end{enumerate}
Since $q_1\sim p_1\leq p$, there exist some projection $r\leq p$ and a partial isometry $v$ such that $vv^*=1-p-q$ and $v^*v=r$. Now the identity of $A$ can be decomposed into the sum of orthogonal projections: $1=(p-r)+r+(1-p-q)+q$. We can write $f(x)$ into a matrix form according to this decomposition. Note that $f(x)=(1-p)f(x)(1-p)$, we have:
\[
f(x)=
\begin{pmatrix}
0&0&0&0\\
0&0&0&0\\
0&0& (1-p-q)f(x)(1-p-q) & (1-p-q)f(x)q\\
0&0&qf(x)(1-p-q) & qf(x)q \\
\end{pmatrix}
\]
Since $C$ has real rank 0 and $\|qf(x)q-y\|<\ep/4$ for some self-adjoint element $y\in C$, we could find a invertible self-adjoint element $b\in C$ such that $\|qf(x)q-b\|<\ep/2$. Let $a=(1-p-q)f(x)(1-p-q)$, $c=(1-p-q)f(x)q$. Let $Z$ be the matrix:
\[
\begin{pmatrix}
p-r&0&0&0\\
0&r&0&0\\
0&0& 1-p-q & -cb^{-1}\\
0&0& 0 & q \\
\end{pmatrix}
\]
Then by the same computation as in Lemma 3.1.5 of \cite{Lin-C}, we can show that:
\[
f(x)=
\begin{pmatrix}
0&0&0&0\\
0&0&0&0\\
0&0& a & c\\
0&0&c^* & b \\
\end{pmatrix}
\,=\,Z
\begin{pmatrix}
0&0&0&0\\
0&0&0&0\\
0&0& a-cb^{-1}c^* & 0\\
0&0& 0  & b \\
\end{pmatrix}
Z^*
\]
Now if we consider the element:
\[
x'\,=\,Z
\begin{pmatrix}
(\ep/2)(p-r)&0&0&0\\
0&0&(\ep/2) v*&0\\
0&(\ep/2) v& a-cb^{-1}c^* & 0\\
0&0& 0  & b \\
\end{pmatrix}
Z^*,
\]
We can check that $x'$ is an invertible self-adjoint element such that $\|f(x)-x'\|< \ep/2$. Hence $\|x-x'\|\leq \|x-f(x)\|+\|f(x)-x'\|<\ep$. Therefore $A$ has real rank 0.
\end{proof}

Hence we have the following corollary:
\begin{cor}
Let $A$ be an unital C*-algebra. Let $\af\colon G\rightarrow \Aut(A)$ be a finite group action with the weak tracial Rokhlin property, such that the crossed product $C^*(G,A,\af)$ is simple. Suppose $A$ has real rank 0, then $C^*(G,A,\af)$ also has real rank 0.
\end{cor}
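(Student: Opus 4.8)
The plan is to mirror the structure of the preceding corollary on stable rank one, feeding the class $\mathscr{I}$ of unital real rank $0$ C*-algebras into Theorem~\ref{wTRP-Main} and then invoking Lemma~\ref{RR0}. First I would record that, since $C^*(G,A,\af)$ is simple, Lemma~\ref{simple} guarantees that $A$ is $\af$-simple, so the dichotomy of Lemma~\ref{SP-SRP2} is available: either $\af$ has the strict Rokhlin property, or $A$ has Property (SP).

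In the strict Rokhlin case I would argue directly from Proposition~\ref{SRP-C}. Given a self-adjoint $a$ in the crossed product and $\ep>0$, that proposition produces a projection $f\in A$ and a \emph{unital} homomorphism $\phi\colon M_n(fAf)\to C^*(G,A,\af)$ whose image $D=\phi(M_n(fAf))$ approximates $a$ to within $\ep/3$; replacing the approximant by its self-adjoint part keeps it in $D$ and within $\ep/3$ of $a$. Since $fAf$ is a corner of the real rank $0$ algebra $A$, the algebra $M_n(fAf)$ has real rank $0$, and hence so does its quotient $D$. As $D$ is unital with $1_D=1$, an invertible self-adjoint element of $D$ close to the approximant is invertible in the whole crossed product, so the invertible self-adjoint elements are dense and $C^*(G,A,\af)$ has real rank $0$. (Alternatively one may simply quote that real rank $0$ is preserved by strict Rokhlin actions.)

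In the remaining case $A$ has Property (SP). Take $\mathscr{I}$ to be the class of unital real rank $0$ C*-algebras. Because any member of $\mathscr{I}$ trivially lies in $\TA\mathscr{I}\subset w\TA\mathscr{I}$ (choose $p=1$ and $C=A$), and because real rank $0$ is closed under passing to unital hereditary subalgebras and tensoring with matrix algebras (Brown--Pedersen), the hypotheses of Theorem~\ref{wTRP-Main} are met with $\mathscr{I}'=\mathscr{I}$. That theorem then places $C^*(G,A,\af)$ in $\TA\mathscr{I}$. Finally, since the crossed product is simple by assumption, Lemma~\ref{RR0} upgrades membership in $\TA\mathscr{I}$ to genuine real rank $0$, completing the proof.

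I expect the only points needing care to be bookkeeping ones: confirming that $A$ is $\af$-simple so that Lemma~\ref{SP-SRP2} applies, and checking that $\mathscr{I}$ has the closure properties demanded by Theorem~\ref{wTRP-Main}. The substantive work has already been carried out in Theorem~\ref{wTRP-Main} and Lemma~\ref{RR0}; the main conceptual subtlety is that Theorem~\ref{wTRP-Main} requires Property (SP), which is precisely why the strict Rokhlin alternative must be disposed of separately.
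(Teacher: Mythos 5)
Your proof is correct, and its engine --- feeding the class $\mathscr{I}$ of unital real rank zero C*-algebras into Theorem \ref{wTRP-Main} and then upgrading membership in $\TA\mathscr{I}$ to genuine real rank zero via Lemma \ref{RR0}, using simplicity of the crossed product --- is exactly the paper's argument. Where you diverge is in how Property (SP) is secured. You extract $\af$-simplicity from simplicity of the crossed product via Lemma \ref{simple}, invoke the dichotomy of Lemma \ref{SP-SRP2}, and then dispose of the strict Rokhlin branch by a direct density argument through Proposition \ref{SRP-C}: corners, matrix amplifications and quotients of real rank zero algebras have real rank zero, and since the homomorphism there is unital, an invertible self-adjoint approximant in the image is invertible in the crossed product. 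That branch is sound, but the paper short-circuits the entire case split with one observation you missed: real rank zero already implies Property (SP), since every nonzero hereditary subalgebra of a real rank zero algebra contains a nonzero projection, so the (SP) hypothesis of Theorem \ref{wTRP-Main} holds outright. (This explains why the preceding stable rank one corollary, whose structure you mirrored, genuinely needs the dichotomy --- stable rank one does not imply (SP) --- while real rank zero does not.) Your extra branch does buy something: it is an explicit verification that real rank zero is preserved by strict Rokhlin actions, a fact the paper asserts only for stable rank one. Finally, the paper also records the trivial reduction to infinite dimensional $A$, which you omit; this is harmless since Definition \ref{wTRP} presupposes $A$ infinite dimensional, and finite dimensional algebras and their crossed products by finite groups have real rank zero anyway.
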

\begin{proof}
$A$ has real rank 0 implies that $A$ has Property (SP). We need only to consider the case that $A$ is infinite dimensional, because any finite dimensional C*-algebra has real rank 0. Therefore the above statement follows from Theorem \ref{wTRP-Main} and Lemma \ref{RR0}
\end{proof}

\section*{Acknowledgement}
I would like to express my deep gratitude to Prof Phillips and Prof Weaver, my research supervisors, for their patient guidance, valuable suggestions and enthusiastic encouragement of this research work. I also would like to thank Prof McCarthy for encouraging me to publish this paper.

\end{document}